\newlength{\defbaselineskip}
\newtheorem{theorem}{Theorem}[section]
\newtheorem{lemma}{Lemma}[section]
\newtheorem{prop}{Proposition}[section]
\newtheorem{remark}{Remark}[section]
\numberwithin{equation}{section}
\newcommand{\be}{\begin{eqnarray}}
\newcommand{\ee}{\end{eqnarray}}
\newcommand{\bestar}{\begin{eqnarray*}}
\newcommand{\eestar}{\end{eqnarray*}}
\begin{document}

\title{
Strong Approximations in  the Almost Sure Central Limit Theorem
and  Limit Behavior of the Center of Mass
}
 \author{Zhishui Hu$^{a}$, ~~Wei Wang$^{a}$
 ~~and ~~Liang Dong$^b$
\thanks{Corresponding author. \\
\indent~~E-mail addresses: huzs@ustc.edu.cn (Z. Hu),  weiwang0331@mail.ustc.edu.cn (W. Wang),
 dongliang@njust.edu. cn (L. Dong)}\\
\\
$^{a}$Department of Statistics
of Finance, School of Management\\
 University of Science and Technology of
China\\
 Hefei, Anhui 230026, China \\
 $^{b}$Nanjing University of Science and Technology\\
  Jiangyin, Jiangsu 214400, China
}
\date{}
\maketitle

\begin{abstract}
 In this paper,  we establish an almost sure central limit theorem for a general random sequence under a strong approximation condition. Additionally,  we derive the law of the iterated logarithm for the center of mass  corresponding to a random sequence under a different strong approximation condition.  Applications  to step-reinforced random walks are also discussed.

\vskip 0.2cm
\noindent{\it MSC:}
primary 60F15; %Strong limit theorems
secondary 60F05; %	Central limit and other weak theorems
%60F17, Functional limit theorems; invariance principles
60G50 %	Sums of independent random variables; random walks
\vskip 0.2cm \noindent{\it Keywords:} Strong approximation, almost sure central limit
	theorem,  the center of mass, the law of the
	iterated logarithm, step-reinforced random walk

\end{abstract}

\section{Introduction and main results}\label{sec1}

Let $\{H_n, n\ge 1\}$ be a sequence of real-valued random variables. 
We say that $\{H_n, n\ge 1\}$ satisfies the almost sure central limit theorem (ASCLT) if
there exists an increasing sequence $0\le b_0<b_1<b_2<\cdots$ such that 
$b_n\rightarrow \infty$ and with probability $1$,
\be \label{ASCLT}
\frac1{\log b_n}\sum_{k=1}^n \frac{b_k-b_{k-1}}{b_k}\delta_{H_k} \Rightarrow G,
\ee
where $\delta_{H_k}$ is the Dirac measure at $H_k$, $G$ 
represents the standard Gaussian measure on $(\mathbb{R}, \mathscr{B}(\mathbb{R}))$,
 and ‘‘$\Rightarrow$’’ denotes weak convergence of measures.

 Let $\mathcal{C}_b(\mathbb{R})$ be the class of bounded, continuous  real-valued functions. According to the definition of weak convergence of measures, 
 (\ref{ASCLT}) is equivalent to 
\begin{equation}\label{ASCLTf}
	\frac1{\log b_n}\sum_{k=1}^n \frac{b_k-b_{k-1}}{b_k} f(H_k)\rightarrow \mathbb{E}(f(\mathbf{Z}))~~\mathrm{a.s.}\qquad \mathrm{for~all } ~f\in \mathcal{C}_b(\mathbb{R}),
\end{equation}
where  $\mathbf{Z}$ is a standard normal random variable. Moreover, by letting $\mathcal{C}_L(\mathbb{R})$ denote the class of bounded Lipschitz continuous  functions, the Portmanteau theorem (see, for instance, Theorem 13.16 in \cite{KL2020}) implies that  (\ref{ASCLT})  is also equivalent to
\begin{equation}\label{ASCLTeq}
	\frac1{\log b_n}\sum_{k=1}^n \frac{b_k-b_{k-1}}{b_k} f(H_k)\rightarrow \mathbb{E}(f(\mathbf{Z}))~~\mathrm{a.s.}
\qquad \mathrm{for~all } ~f\in \mathcal{C}_L(\mathbb{R}).
\end{equation}

Consider  a sequence of independent
and identically distributed (i.i.d.) random variables  Let $\{X_n, n\ge 1\}$  with $\mathbb{ E}(X_1)=0$ and $\mathbb{ E}(X_1^2)=1$.  Then, the sequence
$\{\sum_{k=1}^{n}X_k/\sqrt{n}, n\ge 1\}$ satisfies the ASCLT  with $b_n=n$, i.e.,
\begin{equation}\label{ASCLTiid}
	\frac1{\log n}\sum_{k=1}^n \frac{1}{k} f\Big(\frac{\sum_{i=1}^{k}X_i}{\sqrt{k}}\Big)\rightarrow \mathbb{E}(f(\mathbf{Z}))~~\mathrm{a.s.}\qquad \mathrm{for~all } ~f\in \mathcal{C}_b(\mathbb{R}).
\end{equation}

The ASCLT (\ref{ASCLTiid}) was conjectured by L\'{e}vy \cite{L1937} and subsequently proved  by Brosamler \cite{Br1988} and Schatte \cite{S1988} under
slightly stronger moment assumptions, and by Lacey and Phillip \cite{LP1990} under  finite variances.  Berkes \cite{B1995} provided  necessary and sufficient criteria for a sequence of i.i.d. random variables to satisfy  (\ref{ASCLTiid}).
Additionally, the  ASCLT for sums of independent random variables has been studied extensively; see, e.g., \cite{B2001, B1993, ChG2007} and references therein. Beyond  independence,  the ASCLT for martingales and other dependent sequences has also been thoroughly investigated. For instance, see \cite{Be2004, Be2009, Be2008,Ch1996,Ch2000,Ch2001, L2001,L2002, M2001}, among others.

From the equivalent definition (\ref{ASCLTeq}),  it follows  that if
 $\{{H}_n, n\ge 1\}$ satsifies the ASCLT and  $\{ \widetilde{H}_n, n\ge 1\}$ is a random sequence such that
\begin{equation}
	H_n-\widetilde{H}_n\rightarrow 0~~~\mathrm{a.s.},    \label{diffHH}
\end{equation}
then  $\{ \widetilde{H}_n, n\ge 1\}$ also satisfies the ASCLT. Indeed,  from (\ref{diffHH}), we have  $f(H_n)-f(\widetilde{H}_n) \rightarrow 0~a.s.$
for any fixed $f\in \mathcal{C}_L(\mathbb{R})$, which leads to the desired result
by (\ref{ASCLTeq}).

Let  $\{W(t),t>0\}$ be  a standard Brownian motion. Note that for any $f\in \mathcal{C}_b(\mathbb{R})$,
\be
\frac{1}{\log n} \int_A^{n} \frac{1}{t} f\Big(\frac{W(t)}{\sqrt{t}}\Big)dt
\rightarrow \mathbb{E} f(\mathbf{Z})\quad\mathrm{a.s.}    \label{BM}
\ee
holds for any fixed $A>0$ (see (47) in  \cite{B2024}).
Hence, for any increasing sequence $\{b_n\}$ satisfying  $b_n\rightarrow \infty$ and
\bestar
\sup_{b_n\le t\le b_{n+1}} \Big|\frac{W(t)}{\sqrt{t}}-\frac{W(b_n)}{\sqrt{b_n}}\Big|\rightarrow 0~~\mathrm{a.s.},
\eestar
it is clear from (\ref{BM}) that  $\{W(b_n)/\sqrt{b_n}, n\ge 1\}$ satisfies the 
ASCLT.  Consequently, if  $\{M_n,n\ge1\}$ is  a random sequence such that
$
	M_n-W(b_n)=o(\sqrt{b_n})~\mathrm{a.s.}, 
$
then$\{ M_n/\sqrt{b_n},~n\ge 1\}$ satisfies the ASCLT, i.e.,
\be   \label{ASCLTMk}
		\frac{1}{\log b_n}\sum_{k=1}^{n}\frac{b_k-b_{k-1}}{b_k}f\left(\frac{M_k}{\sqrt{b_k}}\right)
		\rightarrow\mathbb{ E}f(\mathbf{Z})\quad\mathrm{a.s.}   \qquad \mathrm{for~all } ~f\in \mathcal{C}_b(\mathbb{R}).
	\ee

In this paper,  we will extend  (\ref{ASCLTMk}) to  encompass more general continous functions.  

\begin{theorem}	\label{mainthm2}
	Let $\{M_n,n\ge1\}$ be  a sequence of  random variables and let $\{W(t),t>0\}$ be  a standard Brownian 
motion. Suppose $\{b_n,n\ge 0\}$ is an  increasing sequence  such that $b_n\rightarrow \infty$, 
	 $b_{n+1}-b_{n}=o(b_n/\log\log b_n)$ and
\begin{equation}\label{Mnbn}
	M_n-W(b_n)=o(\sqrt{b_n})\quad\mathrm{a.s.}, 
\end{equation}	
then 
	\be
		\frac{1}{\log b_n}\sum_{k=1}^{n}\frac{b_k-b_{k-1}}{b_k}f\left(\frac{M_k}{\sqrt{b_k}}\right)
		\rightarrow\mathbb{ E}f(\mathbf{Z})\quad\mathrm{a.s.} \qquad \mathrm{for~all } ~f\in \mathcal{C}_e(\mathbb{R}),
 \label{ASCLTa1}
	\ee
where  $\mathcal{C}_e(\mathbb{R})$ represents the class of continuous functions $f: \mathbb{R}\rightarrow \mathbb{R}$ satisfying $|f(x)|\le e^{\gamma x^2}$ for some $\gamma<1/2$. 
\end{theorem}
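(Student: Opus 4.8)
The plan is to bootstrap from the bounded case (\ref{ASCLTMk}) to a general $f\in\mathcal C_e(\mathbb R)$ by a truncation argument, the only substantial new ingredient being a uniform bound on the logarithmically averaged exponential moments of $W(b_k)/\sqrt{b_k}$, obtained by comparison with a Brownian integral together with the ergodic theorem. First one must observe that the hypotheses already imply (\ref{ASCLTMk}), i.e.\ that $b_{n+1}-b_n=o(b_n/\log\log b_n)$ forces $\sup_{b_n\le t\le b_{n+1}}\bigl|W(t)/\sqrt t-W(b_n)/\sqrt{b_n}\bigr|\to 0$ a.s.: writing this oscillation as the sum of $\frac1{\sqrt{b_n}}\sup_{b_n\le t\le b_{n+1}}|W(t)-W(b_n)|$ and $|W(b_n)|\,(b_n^{-1/2}-t^{-1/2})$, the second term is $o(1)$ uniformly by the law of the iterated logarithm (LIL), while the first equals $\sqrt{(b_{n+1}-b_n)/b_n}\,R_n$ with $R_n:=(b_{n+1}-b_n)^{-1/2}\sup_{0\le s\le b_{n+1}-b_n}|W(b_n+s)-W(b_n)|$; the $R_n$ are independent and distributed as $\sup_{[0,1]}|W|$, so $\mathbb P\bigl(\sqrt{(b_{n+1}-b_n)/b_n}\,R_n>\varepsilon\bigr)\le 4e^{-\varepsilon^2 b_n/(2(b_{n+1}-b_n))}$, and the hypothesis — via $\log b_n\gtrsim\sum_{k\le n}(b_k-b_{k-1})/b_k$, which compels $(b_{n+1}-b_n)/b_n\to0$ fast enough — makes this series summable, so Borel--Cantelli finishes it. Hence the derivation of (\ref{ASCLTMk}) recalled above applies and (\ref{ASCLTMk}) holds for all $f\in\mathcal C_b(\mathbb R)$.

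Next, fix $f\in\mathcal C_e(\mathbb R)$ with $|f|\le e^{\gamma x^2}$, $\gamma<1/2$, pick a cutoff $\psi\in\mathcal C_b(\mathbb R)$ with $\mathbf{1}_{[-1,1]}\le\psi\le\mathbf{1}_{[-2,2]}$, and set $f_L(x)=f(x)\psi(x/L)\in\mathcal C_b(\mathbb R)$. By (\ref{ASCLTMk}), $\frac1{\log b_n}\sum_k\frac{b_k-b_{k-1}}{b_k}f_L(M_k/\sqrt{b_k})\to\mathbb E f_L(\mathbf Z)$ a.s., and $\mathbb E f_L(\mathbf Z)\to\mathbb E f(\mathbf Z)$ as $L\to\infty$ by dominated convergence. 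Since $|f-f_L|\le e^{\gamma x^2}\mathbf{1}_{\{|x|>L\}}\le e^{-(\gamma_1-\gamma)L^2}e^{\gamma_1 x^2}$ for any $\gamma_1\in(\gamma,1/2)$, and since $M_k/\sqrt{b_k}=W(b_k)/\sqrt{b_k}+o(1)$ a.s.\ lets one replace $e^{\gamma_1(M_k/\sqrt{b_k})^2}$ by $e^{\gamma_2(W(b_k)/\sqrt{b_k})^2}$ (for any $\gamma_2\in(\gamma_1,1/2)$) up to a bounded constant factor and finitely many terms, the theorem reduces to the claim that
\[
\limsup_{n\to\infty}\ \frac1{\log b_n}\sum_{k=1}^n\frac{b_k-b_{k-1}}{b_k}\,e^{\gamma_2(W(b_k)/\sqrt{b_k})^2}<\infty\quad\text{a.s.}\qquad\text{for every }\gamma_2<\tfrac12 .
\]

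To prove this claim I would split $\{1\le k\le n\}$ into ``good'' indices, on which $t\mapsto W(t)/\sqrt t$ has small oscillation over $[b_{k-1},b_k]$, and ``bad'' ones. Call $k$ bad if $\sup_{0\le s\le b_k-b_{k-1}}|W(b_{k-1}+s)-W(b_{k-1})|>\sqrt{b_k}$; exactly as in the first step the bad events are independent with probability $\le 4e^{-b_k/(2(b_k-b_{k-1}))}$, so $\sum_k\frac{b_k-b_{k-1}}{b_k}\mathbf{1}_{\{k\text{ bad}\}}<\infty$ a.s., and with the a.s.\ bound $e^{\gamma_2(W(b_k)/\sqrt{b_k})^2}\le(\log b_k)^q$ eventually (for a fixed $q\in(2\gamma_2,1)$, from the LIL) the bad part of the sum is $O((\log b_n)^q)=o(\log b_n)$. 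For a good $k$ the oscillation $\Delta_k:=\sup_{b_{k-1}\le t\le b_k}|W(t)/\sqrt t-W(b_k)/\sqrt{b_k}|$ is $\le 3$ eventually (the increment part is $\le 2\sqrt{b_k/b_{k-1}}\to2$, the remaining part is $o(1)$ by the LIL); fixing $\gamma_3\in(\gamma_2,\tfrac12)$ and maximising $-(\gamma_3-\gamma_2)r^2+2\gamma_3\Delta_k r$ over $r=|W(b_k)/\sqrt{b_k}|\ge0$ gives $e^{\gamma_2(W(b_k)/\sqrt{b_k})^2}\le e^{C\Delta_k^2}e^{\gamma_3(W(t)/\sqrt t)^2}$ for all $t\in[b_{k-1},b_k]$, so, since $\int_{b_{k-1}}^{b_k}t^{-1}\,dt\ge(b_k-b_{k-1})/b_k$,
\[
\frac{b_k-b_{k-1}}{b_k}\,e^{\gamma_2(W(b_k)/\sqrt{b_k})^2}\ \le\ e^{9C}\int_{b_{k-1}}^{b_k}\frac1t\,e^{\gamma_3(W(t)/\sqrt t)^2}\,dt .
\]
Summing over good $k$ and dividing by $\log b_n$ (and using $\sum_{k\le n}(b_k-b_{k-1})/b_k\sim\log b_n$), the good part is $\le e^{9C}\frac1{\log b_n}\int^{b_n}\frac1t e^{\gamma_3(W(t)/\sqrt t)^2}\,dt$, which converges a.s.\ to $e^{9C}(1-2\gamma_3)^{-1/2}<\infty$: the substitution $t=e^s$ turns the integrand into $e^{\gamma_3 U(s)^2}$ with $U(s)=e^{-s/2}W(e^s)$ the stationary Ornstein--Uhlenbeck process, which is ergodic, and $e^{\gamma_3 U(0)^2}\in L^1$ since $\gamma_3<\tfrac12$, so the ergodic theorem gives $\frac1T\int_0^T e^{\gamma_3 U(s)^2}\,ds\to\mathbb E e^{\gamma_3\mathbf Z^2}$ a.s. Combining the good and bad parts proves the claim, hence the theorem.

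The delicate step is the third one. The oscillation estimate on good blocks must be set up with care, and the passage from the exponent $\gamma_2$ to a strictly larger $\gamma_3<\tfrac12$ is essential — it is what absorbs the otherwise unbounded cross term $2\gamma_3\,|W(b_k)/\sqrt{b_k}|\,\Delta_k$, whereas keeping the same exponent would leave a factor $e^{c\sqrt{\log\log b_n}}\to\infty$. The other point needing attention is the Borel--Cantelli summability used for the bad blocks (and in the first step): extracting it from $b_{n+1}-b_n=o(b_n/\log\log b_n)$ amounts, after partitioning the indices according to $\lfloor\sum_{k\le n}(b_k-b_{k-1})/b_k\rfloor$, to the elementary estimate $\sum_m x_m(\log m)\,m^{-x_m}<\infty$ valid whenever $x_m\to\infty$. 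It is precisely this summability requirement that dictates the $\log\log b_n$ appearing in the hypothesis.
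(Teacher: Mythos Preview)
Your argument is correct, but it follows a genuinely different path from the paper's.  The paper does not truncate $f$ and bound a tail; instead it first quotes the continuous-time result (equation~(47) in Berkes--H\"ormann \cite{B2024}) that
\[
\frac{1}{\log b_n}\int_A^{b_n}\frac{1}{t}\,f\!\Big(\frac{W(t)}{\sqrt t}\Big)\,dt\ \to\ \mathbb E f(\mathbf Z)\qquad\text{a.s. for every }f\in\mathcal C_e(\mathbb R),
\]
and from this immediately deduces the discrete version $T_n(f,W)\to\mathbb Ef(\mathbf Z)$ for the Brownian sequence.  The transfer from $W(b_k)/\sqrt{b_k}$ to $M_k/\sqrt{b_k}$ is then done in two steps: first for uniformly continuous $f$ (via the oscillation estimate and Toeplitz), and second for the single function $h(x)=e^{\gamma x^2}$ by a monotone sandwich, comparing $h(M_n/\sqrt{b_n})$ with $h(|W(t)/\sqrt t|\pm\varepsilon)$.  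A general $f\in\mathcal C_e$ is handled by writing $f_m=(f-h)g_m+h$ with $g_m$ a compactly supported cutoff, so that $(f-h)g_m$ is uniformly continuous and $h\ge f_m\downarrow f$; this gives the $\limsup$, and replacing $f$ by $-f$ gives the $\liminf$.  Your route---truncate $f$, reduce to a $\limsup<\infty$ bound for the exponential-weighted discrete average, then control good blocks by the Brownian integral via the Ornstein--Uhlenbeck ergodic theorem and bad blocks via the LIL---is more self-contained (you essentially reprove the quoted Berkes--H\"ormann result) but also longer and more delicate; the paper's sandwiching argument exploits the monotonicity of $e^{\gamma x^2}$ to avoid your good/bad decomposition entirely.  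For the oscillation lemma itself, the paper simply invokes Theorem~1.2.1 of Cs\"org\H{o}--R\'ev\'esz rather than your block-partition Borel--Cantelli argument (which is correct, and your terse justification $\sum_m x_m(\log m)m^{-x_m}<\infty$ for $x_m\to\infty$ does go through, but the increment theorem is a cleaner one-line citation).
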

\begin{remark} 
Berkes and H\"{o}rmann \cite{B2024} recently proved that (\ref{ASCLTa1}) holds for $M_n=\sum_{k=1}^{n}X_k/\sqrt{b_n}$ under a condition analogous to \eqref{Mnbn}, provided that  $\{X_n, n\ge 1\}$  are independent,  $\mathbb{E}(X_n)=0$, and $b_{n+1}-b_{n}=o(b_n/\log\log b_n)$,   where
$b_n=\sum_{k=1}^n \mathbb{ E}X_k^2$.
	Theorem \ref{mainthm2} extends this result from sums of independent  random variables  to more general sequences.
Berkes and H\"{o}rmann \cite{B2024} also showed that by replacing
(\ref{Mnbn}) with \begin{equation*}
		M_n-W(b_n)=o(b_n^{1/2}\psi(b_n))\quad\mathrm{a.s.}
	\end{equation*}	
for any fixed function $\psi(b_n) \to \infty$,  (\ref{ASCLTa1}) generally becomes  false. Therefore, condition (\ref{Mnbn}) is necessary in some sense.
\end{remark}

Similar methods can  be applied to the
strong limit behavior of the center of mass.
The center of mass 
	 $G_n$ of $M_n$ is defined by
\begin{equation*}\label{Gn11}
	{G}_n=\frac{1}{n}\sum_{k=1}^n M_k.
\end{equation*}
The motion of the centre of mass of a random sequence is of interest from a physical point
of view, particularly when  the sequence models a growing polymer molecule (see, e.g., \cite{A1989, C2011}). 
When $\{M_n, n\ge1\}$ is a simple random 
walk in $\mathbb{R}^d$,
Grill \cite{G1988} investigated the recurrence behavior
 of $G_n$  and confirmed
 Paul Erd\H{o}s's conjecture that $G_n$ will,
with probability $1$, return to a fixed region
around the origin infinitely often if $d=1$,
and only finitely many times if $d\ge 2$.
Lo and Wade \cite{L2019}  extended  Grill's results to   general random walks.

 We can establish the law of the iterated logarithm for  
$\{G_n,n\ge 1\}$ corresponding to a general random sequence $\{M_n, n\ge 1\}$ satisfying a  strong approximation condition 
(\ref{Mnbn1}), which is weaker than \eqref{Mnbn}.

 \begin{theorem}	\label{mainthm13}  
 	Let $\{M_n,n\ge1\}$ be  a sequence of  random variables and let $\{W(t),t>0\}$ be  a standard Brownian 
 	motion. 	 Suppose $\{a_n,n\ge 1\}$ and
$\{b_n,n\ge 1\}$ are  positive sequences  such that  $b_n\uparrow \infty$, $b_{n+1}/b_n\rightarrow 1$  and
 	\begin{equation}\label{Mnbn1}
 		M_n/a_n-W(b_n)=o(\sqrt{b_n\log\log b_n})\quad\mathrm{a.s.}
 	\end{equation}	
If   $a_n$ and
$b_n$ vary regularly with  indices $\rho_1>-1$ 
and  $\rho_2\ge 0$, respectively, 
 then
 	\be
 		\limsup_{n\rightarrow \infty}\frac {1}{\sqrt{2a_n^2b_n\log\log b_n}}\Big|\frac{1}{n}\sum_{k=1}^n M_k\Big|=\sqrt{2}((1+\rho_1+\rho_2)(2+2\rho_1+\rho_2))^{-1/2}\quad \mathrm{a.s.}  \label{barycen}
 	\ee
 \end{theorem}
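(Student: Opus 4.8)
The plan is to reduce the statement about the center of mass $G_n = \frac1n\sum_{k=1}^n M_k$ to a law of the iterated logarithm (LIL) for a weighted average of Brownian motion, using the strong approximation \eqref{Mnbn1}. First I would set $M_k = a_k W(b_k) + R_k$ where $R_k = o(a_k\sqrt{b_k\log\log b_k})$ a.s., so that
\[
\frac1n\sum_{k=1}^n M_k = \frac1n\sum_{k=1}^n a_k W(b_k) + \frac1n\sum_{k=1}^n R_k.
\]
Since $a_k$ is regularly varying with index $\rho_1>-1$, Karamata's theorem gives $\sum_{k=1}^n a_k \asymp n a_n/(1+\rho_1)$, and similarly $\sum_{k=1}^n a_k\sqrt{b_k\log\log b_k}$ is regularly varying; dividing by the normalizer $\sqrt{2a_n^2 b_n\log\log b_n}$ and using $a_k/a_n\to(k/n)^{\rho_1}$, $b_k/b_n\to(k/n)^{\rho_2}$ uniformly on compact ratios, one checks the remainder term $\frac1n\sum R_k$ is $o(\sqrt{a_n^2 b_n\log\log b_n})$ a.s. So the problem becomes: compute $\limsup_n \bigl(2a_n^2 b_n\log\log b_n\bigr)^{-1/2}\bigl|\frac1n\sum_{k=1}^n a_k W(b_k)\bigr|$.

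Next I would handle the Gaussian average. Write $S_n := \frac1n\sum_{k=1}^n a_k W(b_k)$; this is a mean-zero Gaussian random variable, and the idea is to find the asymptotics of its variance $\sigma_n^2 = \mathrm{Var}(S_n)$ and then invoke a LIL for the (non-stationary, correlated) Gaussian sequence. Using $\mathbb{E}[W(b_j)W(b_k)] = b_{j\wedge k}$,
\[
\sigma_n^2 = \frac1{n^2}\sum_{j=1}^n\sum_{k=1}^n a_j a_k\, b_{j\wedge k}.
\]
With $a_k/a_n\to(k/n)^{\rho_1}$ and $b_k/b_n\to(k/n)^{\rho_2}$, a Riemann-sum approximation gives $\sigma_n^2 \sim a_n^2 b_n \cdot c(\rho_1,\rho_2)$ where
\[
c(\rho_1,\rho_2) = \int_0^1\!\!\int_0^1 (st)^{\rho_1}(s\wedge t)^{\rho_2}\,ds\,dt = 2\int_0^1 t^{\rho_1}\!\int_0^t s^{\rho_1+\rho_2}\,ds\,dt = \frac{2}{(1+\rho_1+\rho_2)(2+2\rho_1+\rho_2)},
\]
which matches the constant in \eqref{barycen} once multiplied by the factor $2\log\log b_n$ and square-rooted (the claimed constant squared is $2c(\rho_1,\rho_2)$). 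The regular-variation hypotheses $\rho_1>-1$ and $\rho_2\ge0$ are exactly what make these integrals finite.

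For the actual $\limsup$, I would represent $S_n$ in terms of the Brownian path more carefully — either via an Abel summation to write $\frac1n\sum a_k W(b_k)$ as a stochastic-integral-like functional of $W$ against a regularly varying kernel, or by noting $S_n$ itself, after time-change, behaves like a Gaussian process whose variance grows regularly. The cleanest route is probably to show $\{S_n\}$ admits a representation $S_n = \int_0^{\beta(n)} g_n(u)\,dW(u)$ (or a discretized version) and apply a general Gaussian LIL (of the type: for a Gaussian sequence with variance $\sigma_n^2$ regularly varying and suitable decay of correlations between widely separated indices, $\limsup S_n/\sqrt{2\sigma_n^2\log\log\sigma_n^2} = 1$ a.s.). One verifies $\log\log\sigma_n^2\sim\log\log b_n$. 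The upper bound follows from a Borel–Cantelli argument along a geometric subsequence $n_j = \lfloor\theta^j\rfloor$ together with a maximal inequality for the Gaussian increments; the lower bound follows from a second Borel–Cantelli using near-independence of the blocks $S_{n_{j+1}} - (\text{main part from } S_{n_j})$ and anti-concentration of Gaussians.

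The main obstacle I anticipate is the lower bound in the Gaussian LIL: because $S_n$ is a heavily smoothed average of $W$, consecutive $S_n$ are strongly positively correlated, so extracting an asymptotically independent subsequence that still captures the full constant $c(\rho_1,\rho_2)$ (rather than a strictly smaller one) requires care in choosing the sparsification rate $\theta\to\infty$ and in showing that the "new randomness" entering $S_{n_{j+1}}$ on the time interval $(b_{n_j}, b_{n_{j+1}}]$ contributes variance $\sim(1-o(1))\sigma_{n_{j+1}}^2$. Concretely one must check $\mathrm{Var}\bigl(\mathbb{E}[S_{n_{j+1}}\mid\mathcal{F}_{b_{n_j}}]\bigr)/\sigma_{n_{j+1}}^2\to0$ as first $j\to\infty$ then $\theta\to\infty$, which again reduces to a dominated-convergence estimate on the kernel integrals using $\rho_1>-1$, $\rho_2\ge0$. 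Everything else — Karamata estimates, the remainder bound from \eqref{Mnbn1}, the upper-bound Borel–Cantelli — is routine given the regular-variation machinery.
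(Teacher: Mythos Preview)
Your reduction of $\frac1n\sum M_k$ to $\frac1n\sum a_k W(b_k)$ and your variance computation are both correct and parallel the paper's Lemma~4.1 (the paper goes one step further, to the continuous integral $I_n = \int_0^1 t^{\rho_1}W(b_n t^{\rho_2})\,dt/\sqrt{2b_n\log\log b_n}$). The substantive divergence is in how the LIL with the \emph{exact} constant is extracted. The paper does not attempt a bare-hands Gaussian Borel--Cantelli at all; instead, for $\rho_2>0$ it rewrites $I_n$ as a continuous linear functional of the Strassen process $\eta(b_n,s)=W(b_ns)/\sqrt{2b_n\log\log b_n}$, namely $I_n=\rho_2^{-1}\int_0^1 s^{(1+\rho_1-\rho_2)/\rho_2}\eta(b_n,s)\,ds$, and invokes Strassen's functional law of the iterated logarithm: the set of limit points of $\{\eta(b_n,\cdot)\}$ in $C[0,1]$ is the Strassen ball $\mathscr S$, so $\limsup|I_n|$ equals the supremum of this linear functional over $\mathscr S$, computed in one line by Cauchy--Schwarz. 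This gives both the upper and lower bound with the sharp constant simultaneously. The case $\rho_2=0$ is handled separately by observing $I_n=(1+\rho_1)^{-1}W(b_n)/\sqrt{2b_n\log\log b_n}$ and quoting the ordinary LIL along $\{b_n\}$ (which is where $b_{n+1}/b_n\to1$ enters).

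Your block-independence scheme for the lower bound has a real gap precisely at $\rho_2=0$. If $b_n$ is slowly varying, then for any geometric subsequence $n_j=\lfloor\theta^j\rfloor$ one has $b_{n_j}/b_{n_{j+1}}\to1$, so the Brownian path over $(b_{n_j},b_{n_{j+1}}]$ contributes variance only $b_{n_{j+1}}-b_{n_j}=o(b_{n_{j+1}})$. A direct calculation then gives
\[
\mathrm{Var}\bigl(S_{n_{j+1}}-\mathbb E[S_{n_{j+1}}\mid\mathcal F_{b_{n_j}}]\bigr)=o(\sigma_{n_{j+1}}^2),
\]
the opposite of what your second Borel--Cantelli step requires; equivalently, $\mathrm{Var}(\mathbb E[S_{n_{j+1}}\mid\mathcal F_{b_{n_j}}])/\sigma_{n_{j+1}}^2\to1$, not $0$. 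So the ``dominated-convergence estimate on the kernel integrals using $\rho_1>-1$, $\rho_2\ge0$'' that you call routine is simply false at the boundary $\rho_2=0$. You would need either a super-geometric subsequence (with correspondingly more delicate bookkeeping to keep the Borel--Cantelli sums divergent) or a separate argument recognising $S_n\sim a_nW(b_n)/(1+\rho_1)$ in that regime. The Strassen route sidesteps the entire issue.
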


\begin{remark}
In  Theorem  \ref{mainthm13}, we need 
the condition $b_{n+1}/b_n\rightarrow 1$ to make $\{b_n, n\ge 1\}$ dense enough so that $\limsup_{n\rightarrow \infty} |W(b_n)|/\sqrt{2b_n\log\log b_n}=1$ and the process $\{\eta(b_n, s), 0\le s\le 1\}$ satisfies 
 Strassen's law of iterated logarithm, 
where
$\eta(b_n, s)$ is defined as in (\ref{etats}) below.   Without the
restriction $b_{n+1}/b_n\rightarrow 1$, we can also get from the proof that
\bestar
 		\limsup_{n\rightarrow \infty}\frac {1}{\sqrt{2a_n^2b_n\log\log b_n}}\Big|\frac{1}{n}\sum_{k=1}^n M_k\Big|\le\sqrt{2}((1+\rho_1+\rho_2)(2+2\rho_1+\rho_2))^{-1/2}\quad \mathrm{a.s.}  
 	\eestar
\end{remark}

The rest of this paper is organized as follows. 
In Section \ref{sec2}, we apply Theorems \ref{mainthm2} and \ref{mainthm13} to step-reinforced random walks and present the corresponding 
 ASCLTs and the laws of the iterated logarithm for the  centers of mass. Sections \ref{sec3}, \ref{sec4} and \ref{sec5} provide the proofs of Theorem \ref{mainthm2},  Theorem\ref{mainthm13}, and Theorems \ref{mainthm1}-\ref{limsup}, respectively.  
The proofs of two propositions 
used in Section \ref{sec5} are offered 
in \ref{seca}.

Throughout the paper, we assume that $C$ denotes a generic positive constant that may take a different value in each appearance.

\section{Applications to step-reinforced random walks}\label{sec2}

Let $\{X_n,n\geq1\}$ be a sequence of i.i.d. random variables, and let $\{\varepsilon_n,n\geq2\}$ be a sequence of i.i.d. Bernoulli variables with parameter $p\in [0,1)$.
For each $n\geq2$, let  $U_n$ be a random variable uniformly distributed on $ \{ 1,2, \cdots ,n-1 \} $.   
We assume that all the above random variables $X_1, X_2, \varepsilon_2, U_2, X_3, \varepsilon_3, U_3,\cdots$ are mutually independent. 
Set  $\hat{X}_1=X_1$ and then define recursively for $n\geq2$,
\begin{equation}\label{defhatX}
\hat{X}_n:=\left\{
\begin{aligned}
&X_n,&&\quad \text{if}\ \varepsilon_n=0, \\
&\hat{X}_{U_n}, &&\quad \text{if}\ \varepsilon_n=1.
\end{aligned}
\right.
\end{equation}
The algorithm (\ref{defhatX}) was introduced by Simon \cite{S55} to explain the appearance of
power laws  in a wide range of empirical data.
The sequence of the partial sums
$$\hat{S}_n:=\hat{X}_1+\cdots+\hat{X}_n,\qquad n\geq1$$ 
is termed a positively step-reinforced  random walk. Furthermore, Bertoin \cite{B2023}  recently introduced
the following counterbalancing algorithm. Set $\check{X}_1=X_1$ and recursively define  for $n\ge 2$,
\begin{equation*}
	\label{defcheckX}
	\check{X}_n :=
	\left\{
	\begin{aligned}
		& X_n, &&\quad \text{if}\ \varepsilon_n=0, \\
		& -\check{X}_{U_n},  &&\quad \text{if}\ \varepsilon_n=1.
	\end{aligned}
	\right.
\end{equation*}
The sequence of the partial sums
\begin{equation*}
	\check{S}_n:=\check{X}_1+\cdots+\check{X}_n,\qquad n\geq1
\end{equation*}
is called a negatively step-reinforced  random walk. Note that for $p=0$, both $\hat{S}_n$ and $\check{S}_n$ are    random walks with i.i.d steps.

 Assuming 	$X_1$ follows the Rademacher distribution,  i.e., $\mathbb{P}(X_1=1)=\mathbb{P}(X_1=-1)=1/2$, K\"{u}rsten \cite{K2016} and Bertoin \cite{B2023} pointed
out that $\hat{S}_n$ and $\check{S}_n$ can be regarded as
elephant random walks (ERW) with memory parameters $(1+p)/2$  and $(1-p)/2$, respectively. 
The  ERW,  introduced by Sch\"{u}tz and Trimper \cite{S2004}, is a one-dimensional discrete-time nearest neighbour random walk with a
memory of the entire past.
Specifically, fixing a memory parameter $q \in [0,1)$, suppose that an elephant makes an initial step in $\{-1,1\}$ at time 1. Thereafter, at each time $n\ge 2$, the
elephant remembers one step from the past chosen uniformly at random. With probability $q$,
the elephant repeats this step, and with probability $1-q$, it makes a step in the opposite direction. The ERW has garnered significant interest and has been extensively studied in recent years; see e.g \cite{B2016,Be2017, Be2019, Be2021, C2017}.
If $X_1$ has a symmetric stable distribution,  $\hat{S}_n$ corresponds to the  shark random swim, which has been studied in depth by Businger \cite{Bu2018}.

 In a recent study, Bertoin \cite{B2021}  established  the functional central limit theorem
for $\hat{S}_n$.  Bertenghi  and  Rosales-Ortiz \cite{BR2022} obtained the functional central limit theorem for $\check{S}_n$ 
and the joint functional central limit theorem for $(\sum_{k=1}^n X_k, \hat{S}_n,  \check{S}_n)$. 
Hu and Zhang \cite{HZZY}
established  strong invariance principles for both $\hat{S}_n$ and $\check{S}_n$.

In this paper, we denote $m_i=\mathbb{E}(X_1^i)$ for $i=1,2$ and define 
\begin{equation*}
	{\sigma}^2=m_2-m_1^2,~~~\check{\mu}=\frac{(1-p)m_1}{1+p},~~\check{\sigma}^2=m_2-\check{\mu}^2.
\end{equation*}
We derive the ASCLT for step-reinforced random walks.

\begin{theorem}	\label{mainthm1}
Assuming that $\mathbb{E}|X_1|^{2+\delta}<\infty$  for some $\delta>0$  and  $f\in \mathcal{C}_e(\mathbb{R})$.
	\begin{itemize}
		\item [1.]If  $ p \in [0,1/2)$, then
		\begin{equation}
			\frac1{\log n}\sum_{k=1}^n\frac{1}{k}f\left(\frac{\sqrt{1-2p}}{\sigma\sqrt{k}}(\hat{S}_k-km_1)\right) \rightarrow \mathbb{E}(f(\mathbf{Z})) \quad\mathrm{a.s.}\label{ASC1}
		\end{equation} 
		\item [2.]	If $p=1/2$,  then 
		\begin{equation}
			\frac1{\log\log n}\sum_{k=2}^n\frac1{k\log k}f\left(\frac{\hat{S}_k-km_1}{\sigma\sqrt{k\log k}}\right)\rightarrow \mathbb{E}(f(\mathbf{Z})) \quad\mathrm{a.s.}\label{ASC2}
		\end{equation}
		\item [3.] If  $p\in [0,1)$, then 
		\begin{equation}
			\frac1{\log n}\sum_{k=1}^n\frac1kf\left(\frac{\sqrt{1+2p}}{\check{\sigma}\sqrt{k}}(\check{S}_k-k\check{\mu})\right)\rightarrow \mathbb{E}(f(\mathbf{Z})) \quad\mathrm{a.s.}\label{ASC3}
		\end{equation}
	\end{itemize}
	
\end{theorem}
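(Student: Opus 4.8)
The plan is to deduce all three parts of Theorem~\ref{mainthm1} from Theorem~\ref{mainthm2} by feeding in the strong invariance principles for $\hat{S}_n$ and $\check{S}_n$ established by Hu and Zhang \cite{HZZY}. For each part I would exhibit an increasing sequence $\{b_n\}$ and a sequence $\{M_n\}$ such that: (i) $M_k/\sqrt{b_k}$ is exactly the argument of $f$ in the assertion; (ii) the weights $(b_k-b_{k-1})/b_k$ and the prefactor $1/\log b_n$ reproduce, up to an asymptotically negligible correction, those appearing in the assertion; (iii) $b_{n+1}-b_n=o(b_n/\log\log b_n)$; and (iv) $M_n-W(b_n)=o(\sqrt{b_n})$ a.s.\ for a standard Brownian motion $W$. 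Once (i)--(iv) are in place, Theorem~\ref{mainthm2} immediately gives the convergence for every $f\in\mathcal{C}_e(\mathbb{R})$. The role of the moment hypothesis $\mathbb{E}|X_1|^{2+\delta}<\infty$ is precisely to secure (iv): it sharpens the generic martingale strong-approximation error (of order $\sqrt{b_n\log\log b_n}$) to $o(n^{1/2-\kappa})$ for some $\kappa=\kappa(\delta)>0$, which is $o(\sqrt{b_n})$ for the sequences used below; by contrast the weaker rate would only suffice for Theorem~\ref{mainthm13}.

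For the diffusive cases (parts 1 and 3) I would take $b_n=n$, so that $b_k-b_{k-1}=1$, $(b_k-b_{k-1})/b_k=1/k$ and $\log b_n=\log n$ reproduce exactly the weights and normalization in (\ref{ASC1}) and (\ref{ASC3}), and set $M_n=\tfrac{\sqrt{1-2p}}{\sigma}(\hat{S}_n-nm_1)$ in part~1 and $M_n=\tfrac{\sqrt{1+2p}}{\check{\sigma}}(\check{S}_n-n\check{\mu})$ in part~3. Then (iii) is trivial since $1=o(n/\log\log n)$, and (iv) is the Hu--Zhang strong invariance principle: in the regime $p\in[0,1/2)$, $\hat{S}_n-nm_1$ is approximated by a Brownian motion with diffusion coefficient $\sigma^2/(1-2p)$, and $\check{S}_n-n\check{\mu}$ by one with coefficient $\check{\sigma}^2/(1+2p)$, so after rescaling time the coupling is with a \emph{standard} Brownian motion and the error is $o(\sqrt n)$ under the $(2+\delta)$-moment assumption — this normalization is exactly what forces $\mathbf{Z}$ to be standard normal in the limit. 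Theorem~\ref{mainthm2} then yields (\ref{ASC1}) and (\ref{ASC3}).

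For the critical case (part 2) the natural choice is $b_n=\log n$ for $n\ge2$ (with $b_0,b_1$ fixed so that $0\le b_0<b_1<\log 2$, which is immaterial since finitely many terms do not affect the a.s.\ limit), giving $\log b_n=\log\log n$ and $(b_k-b_{k-1})/b_k=\log(k/(k-1))/\log k$, together with $M_n=(\hat{S}_n-nm_1)/(\sigma\sqrt n)$, so that $M_k/\sqrt{b_k}=(\hat{S}_k-km_1)/(\sigma\sqrt{k\log k})$. Condition (iii) holds because $b_{n+1}-b_n=\log(1+1/n)\sim 1/n=o(\log n/\log\log\log n)$, and (iv), namely $M_n-W(\log n)=o(\sqrt{\log n})$ a.s., is the critical-regime strong invariance principle of \cite{HZZY} (in the form $\hat{S}_n-nm_1=\sigma\sqrt n\,W(\log n)+o(\sqrt{n\log n})$ a.s., again using $\mathbb{E}|X_1|^{2+\delta}<\infty$). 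Theorem~\ref{mainthm2} then delivers the convergence of $\tfrac1{\log\log n}\sum_k\tfrac{\log(k/(k-1))}{\log k}f(M_k/\sqrt{b_k})$ to $\mathbb{E}f(\mathbf{Z})$, so it remains to replace the weight $\log(k/(k-1))/\log k$ by $1/(k\log k)$ as in (\ref{ASC2}). Their difference is $O(k^{-2}(\log k)^{-1})$; since $M_k/\sqrt{b_k}=W(b_k)/\sqrt{b_k}+o(1)$ and the law of the iterated logarithm for $W$ gives $(M_k/\sqrt{b_k})^2\le C\log\log b_k$ eventually a.s., we get $|f(M_k/\sqrt{b_k})|\le e^{\gamma(M_k/\sqrt{b_k})^2}\le(\log\log k)^{C'}$ for some constant $C'$, whence $\sum_k k^{-2}(\log k)^{-1}|f(M_k/\sqrt{b_k})|<\infty$ a.s.; dividing by $\log\log n\to\infty$ makes the weight change harmless.

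The hard part will be the critical-regime strong invariance principle in step (iv): one needs both the correct Brownian time-change $\log n$ (with the constant normalized so that $\mathbf{Z}$ is standard normal) and an approximation error of smaller order than $\sqrt{b_n}=\sqrt{\log n}$, which is exactly where the extra $\delta$ moments are spent. Granting the results of \cite{HZZY}, the rest is routine bookkeeping: choosing $\{b_n\}$ and $\{M_n\}$, verifying the two trivial growth conditions on $\{b_n\}$, and carrying out the elementary weight replacement in part~2.
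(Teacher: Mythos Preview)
Your overall strategy---feed a strong invariance principle into Theorem~\ref{mainthm2}---is exactly what the paper does, and your treatment of the critical case (part~2) with $b_n=\log n$ matches the paper's approach. But your handling of parts~1 and~3 has a genuine gap: the choice $b_n=n$ cannot work when $p\neq 0$.

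The claim ``$\hat{S}_n-nm_1$ is approximated by a Brownian motion with diffusion coefficient $\sigma^2/(1-2p)$'', i.e.\ $\hat S_n-nm_1=B(n)+o(\sqrt n)$ for some BM $B$, is false. The reinforced walk has the wrong covariance structure: using the martingale $\hat a_n(\hat S_n-nm_1)$ one computes $\mathrm{Cov}(\hat S_m-mm_1,\hat S_n-nm_1)\sim (n/m)^{p}\cdot\sigma^2 m/(1-2p)$ for $m<n$, which for fixed $m$ grows like $n^{p}$ rather than staying constant. Equivalently, the functional CLT for $\hat S_n$ has a non-Brownian Gaussian limit (the noise-reinforced Brownian motion of \cite{B2021}). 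What Hu--Zhang actually prove, and what the paper sharpens to Proposition~\ref{mainthm212}, is
\[
n^{-p}(\hat S_n-nm_1)-W(\hat b_n)=o\bigl(\sqrt{\hat b_n}\bigr)\quad\text{a.s.},\qquad \hat b_n=\tfrac{\sigma^2}{1-2p}\,n^{1-2p},
\]
so the time scale for the approximating BM is $n^{1-2p}$, not $n$; you cannot ``rescale time'' to get $b_n=n$ because the map $t\mapsto t^{1-2p}$ is nonlinear. The same issue arises in part~3, where the correct time scale is $n^{1+2p}$.

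The fix is to take $M_n=n^{-p}(\hat S_n-nm_1)$ and $b_n=\hat b_n$ in Theorem~\ref{mainthm2}; then $M_k/\sqrt{b_k}$ is still the desired argument of $f$, and you are left with the weight replacement $(\hat b_k-\hat b_{k-1})/\hat b_k\sim(1-2p)/k$ and $\log\hat b_n\sim(1-2p)\log n$, which is handled exactly as in the proof of \eqref{TnfW} (and as you already do for part~2). Note also that the $o(\sqrt{b_n})$ rate you need is not in \cite{HZZY} as stated (that reference yields only $o(\sqrt{b_n\log\log b_n})$); the required sharpening under $\mathbb{E}|X_1|^{2+\delta}<\infty$ is the content of Propositions~\ref{mainthm212} and~\ref{mainthm222}, proved in the appendix of the present paper.
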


\begin{remark}
By Theorem \ref{mainthm1}, we can also derive the  ASCLT for the ERW  with parameter $q \in (0, 3/4]$
and  $ f\in \mathcal{C}_e(\mathbb{R})$. This generalizes the result
obatined by Guevara \cite{G2019}, who established the
ASCLTs for $ f\in \mathcal{C}_b(\mathbb{R})$ and for specific functions $f(x)=x^{2r}$, where $r\in \mathbb{N}$.
\end{remark}

If we only consider  the ASCLT for $f\in \mathcal{C}_b(\mathbb{R})$, the moment condition $\mathbb{E}(|X_1|^{2+\delta})<\infty$ for some $\delta>0$  in Theorem 
\ref{mainthm1} can be weakened.

\begin{theorem}\label{mainthm3}
If $\mathbb{E}(X_1^2)<\infty$, then Theorem \ref{mainthm1} remains valid for $f\in \mathcal{C}_b(\mathbb{R})$.
\end{theorem}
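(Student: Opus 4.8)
The plan is to revisit the proof of Theorem~\ref{mainthm1} and to locate the single step that consumes the extra moment $\mathbb{E}|X_1|^{2+\delta}<\infty$; as explained below, that moment is spent only on a strong approximation precise enough to invoke Theorem~\ref{mainthm2} (hence the larger class $\mathcal{C}_e$), while for $f\in\mathcal{C}_b(\mathbb{R})$ one can dispense with the strong approximation and argue instead through a martingale almost sure central limit theorem, whose hypotheses need only $\mathbb{E}(X_1^2)<\infty$. So the task reduces to verifying those hypotheses for the natural martingale attached to the centered step-reinforced walk, in each of the three cases.

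That martingale is the one underpinning the analysis of Section~\ref{sec5}. Taking $\mathcal{F}_n=\sigma(\hat{X}_1,\dots,\hat{X}_n)$, the recursion (\ref{defhatX}) gives $\mathbb{E}[\hat{S}_{n+1}-(n+1)m_1\mid\mathcal{F}_n]=(1+p/n)(\hat{S}_n-nm_1)$, so with $a_n=\prod_{k=1}^{n-1}k/(k+p)\sim\Gamma(1+p)\,n^{-p}$ the process $M_n:=a_n(\hat{S}_n-nm_1)$ is a martingale with increments $\xi_k=a_k\big((\hat{X}_k-m_1)-\tfrac{p}{k-1}(\hat{S}_{k-1}-(k-1)m_1)\big)$; since every $\hat{X}_k$ has the law of $X_1$, this is an $L^2$ martingale as soon as $\mathbb{E}(X_1^2)<\infty$, and the family $\{(\hat{X}_k-m_1)^2\}$ is uniformly integrable. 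One computes $b_n:=\mathbb{E}(M_n^2)\sim c\,n^{1-2p}$ when $p<1/2$ and $b_n\sim c\log n$ when $p=1/2$, so $b_n\to\infty$ and $b_{n+1}/b_n\to1$. The counterbalanced walk is handled identically: the analogous recursion for $\check X_n$ gives $\mathbb{E}[\check{S}_{n+1}-(n+1)\check{\mu}\mid\mathcal{F}_n]=(1-p/n)(\check{S}_n-n\check{\mu})$, so $\check{M}_n:=\check{a}_n(\check{S}_n-n\check{\mu})$ with $\check{a}_n=\prod_{k=1}^{n-1}k/(k-p)\sim\Gamma(1-p)\,n^{p}$ is an $L^2$ martingale with $\check{b}_n:=\mathbb{E}(\check{M}_n^2)\sim c\,n^{1+2p}$. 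In every case the factor $a_k$ (resp. $\check a_k$) cancels in $M_k/\sqrt{b_k}=(\hat S_k-km_1)/\sqrt{\mathbb{E}(\hat S_k-km_1)^2}$, and inserting the asymptotics $\mathbb{E}(\hat S_k-km_1)^2\sim\sigma^2k/(1-2p)$ (resp. $\sim\sigma^2k\log k$, resp. $\sim\check\sigma^2k/(1+2p)$) identifies $M_k/\sqrt{b_k}$ with the studentized quantity inside $f$ in (\ref{ASC1})--(\ref{ASC3}) up to a factor $1+o(1)$, the discrepancy being absorbed since $f$ is bounded continuous and $(\hat S_k-km_1)/\sqrt{k}=O(\sqrt{\log\log k})$ a.s.

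It then remains to check the two standard inputs of a martingale ASCLT (in the measure-valued, equivalently all-bounded-continuous-$f$, form): a conditional Lindeberg condition for $(\xi_k)$, which follows from $b_n\to\infty$, $a_k^2/b_n\to0$ and the uniform integrability above; and the almost sure convergence of the normalized predictable quadratic variation, $\langle M\rangle_n/b_n\to1$ a.s. A direct expansion gives $\mathbb{E}(\xi_k^2\mid\mathcal{F}_{k-1})=a_k^2\big((1-p)\sigma^2+p\,Q_{k-1}-\tfrac{p^2}{(k-1)^2}(\hat S_{k-1}-(k-1)m_1)^2\big)$ with $Q_{k-1}=\frac{1}{k-1}\sum_{j<k}(\hat X_j-m_1)^2$, so the bracket convergence reduces to the law of the iterated logarithm $\hat S_k-km_1=O(\sqrt{k\log\log k})$ a.s. together with the almost sure convergence $Q_n\to\sigma^2$ of the empirical centered second moment of the reinforced sequence. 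Granting these, the martingale ASCLT delivers (\ref{ASC1})--(\ref{ASC3}) for all $f\in\mathcal{C}_b(\mathbb{R})$; the case $p=1/2$ is the same argument carrying an extra logarithmic factor.

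The main obstacle is the almost sure convergence $Q_n\to\sigma^2$: the variables $(\hat X_j-m_1)^2$ are strongly dependent through the reinforcement, so this is not a classical strong law and must be proved on its own — it is exactly the sort of reinforced law of large numbers established separately (e.g. by expressing $Q_n$ through the genealogical occupation counts and running a martingale argument, needing only $\mathbb{E}(X_1^2)<\infty$). Finally, it is worth recording why the strong-approximation route of Theorem~\ref{mainthm1} is unavailable here: through Skorokhod embedding $M_n=W(\tau_n)$, the bound $M_n-W(b_n)=o(\sqrt{b_n})$ required by (\ref{ASCLTMk}) would force $\tau_n-b_n=o(b_n/\log\log b_n)$ on the embedding clock, and that asks for a Marcinkiewicz--Zygmund bound on the clock increments with an exponent strictly above $1$, i.e. $\mathbb{E}|X_1|^{2+\delta}<\infty$ — precisely the moment Theorem~\ref{mainthm1} pays to reach $\mathcal{C}_e$.
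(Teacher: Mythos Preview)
Your overall strategy --- deduce (\ref{ASC1})--(\ref{ASC3}) for bounded continuous $f$ from a martingale ASCLT applied to the natural martingale $M_n=a_n(\hat S_n-nm_1)$ (resp.\ $\check M_n$) --- is exactly the route the paper takes. But two points in your sketch are genuine gaps, and the paper's proof is designed precisely to close them.

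\medskip
\textbf{1. The ASCLT for martingales is not the CLT for martingales.} You write that the ``two standard inputs'' are bracket convergence $\langle M\rangle_n/b_n\to1$ a.s.\ and ``a conditional Lindeberg condition'', and that the latter follows from $a_k^2/b_n\to0$ plus uniform integrability of $(\hat X_k-m_1)^2$. That argument establishes the ordinary (array-type) Lindeberg condition and hence the martingale CLT, but the ASCLT requires strictly more: in the version the paper invokes (Lemma~\ref{ASCLTfm}, from Cha\^{a}bane), one needs the \emph{summable} conditions
\[
\sum_{n\ge1} b_n^{-1}\,\mathbb{E}\bigl(\xi_n^2\,I\{|\xi_n|>\varepsilon\sqrt{b_n}\}\,\big|\,\mathscr{F}_{n-1}\bigr)<\infty
\quad\text{and}\quad
\sum_{n\ge1} b_n^{-\beta}\,\mathbb{E}\bigl(|\xi_n|^{2\beta}I\{|\xi_n|\le\sqrt{b_n}\}\,\big|\,\mathscr{F}_{n-1}\bigr)<\infty
\]
for some $\beta>0$. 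For your untruncated increments $\xi_n=a_n\bigl((\hat X_n-m_1)-\tfrac{p}{n-1}(\hat S_{n-1}-(n-1)m_1)\bigr)$ with $a_n^2/b_n\asymp 1/n$, the first series has expectation comparable to
\[
\sum_{n}\frac{1}{n}\,\mathbb{E}\bigl((X_1-m_1)^2 I\{|X_1-m_1|>\varepsilon'\sqrt{n}\}\bigr)
\ \asymp\ \mathbb{E}\bigl((X_1-m_1)^2\log^{+}|X_1-m_1|\bigr),
\]
which is \emph{not} controlled by $\mathbb{E}(X_1^2)<\infty$. So without further work the summable Lindeberg condition fails in general.

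\medskip
\textbf{2. The missing device is truncation.} The paper does not apply the martingale ASCLT to $M_n$ directly. It first truncates, setting $Z_n=X_nI(|X_n|\le\sqrt{n})$ (i.e.\ $\alpha=1/2$ in (\ref{trZn})) and works with the truncated martingale $\hat M_n^{*}=\hat a_n(\hat S_n^{*}-\mathbb{E}\hat S_n^{*})$. Then conditions (ii) and (iii) of Lemma~\ref{ASCLTfm} are verified simultaneously via the fourth moment bound $\mathbb{E}(\hat Y_n^4)\le C\hat a_n^4\,\mathbb{E}(Z_n^4)$ and the Fubini computation
\[
\sum_{n}\hat s_n^{-4}\,\mathbb{E}(\hat Y_n^4)
\ \le\ C\sum_{n}\frac{\mathbb{E}(Z_n^4)}{n^2}
\ =\ C\,\mathbb{E}\Bigl(X_1^4\sum_{n\ge X_1^2} n^{-2}\Bigr)
\ \le\ C\,\mathbb{E}(X_1^2)<\infty,
\]
which needs only the second moment. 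Condition~(i) (your bracket convergence, equivalently $Q_n\to\sigma^2$) is obtained for the \emph{truncated} process via Lemma~\ref{lemmaa4} with $\delta=0$, whose proof in turn rests on the Marcinkiewicz-type estimate of Lemma~\ref{lemmaa3}; both use only $\mathbb{E}(X_1^2)<\infty$ once the truncation at $\sqrt{n}$ is in place. Finally the paper transfers the ASCLT from $\hat M_n^{*}/\hat s_n$ to $\sqrt{1-2p}\,(\hat S_n-nm_1)/\sqrt{n}$ by showing their difference tends to $0$ a.s.\ (using (\ref{hatas}), (\ref{Esn}), (\ref{diffstars}) and the LIL for $\hat S_n$), and invoking Lipschitz $f$ together with the Portmanteau equivalence (\ref{ASCLTeq}).

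\medskip
In short: your plan is right in spirit, but ``Lindeberg $+$ bracket'' is a CLT package, not an ASCLT package; and the reinforced SLLN $Q_n\to\sigma^2$ you leave as a black box is exactly what the truncation machinery of Section~\ref{sec5} and the appendix is set up to deliver. Inserting the truncation at level $\sqrt{n}$ and checking the three conditions of Lemma~\ref{ASCLTfm} as above repairs the argument.
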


We can also consider 
	the centers of mass  of $\hat{S}_n$ and  $\check{S}_n$: 
\bestar
\hat{G}_n=\frac{1}{n}\sum_{k=1}^n \hat{S}_k,~~ ~~~\check{G}_n=\frac{1}{n}\sum_{k=1}^n \check{S}_k .
\eestar
%Bercu and Laulin \cite{Be2021} investigated the asymptotic behavior of the center of mass of the elephant random walk in $\mathbb{R}^d$. Chen and  Laulin \cite{Chen2023} obtained similar results for
%the multidimensional amnesia-reinforced elephant random walk. 
By applying Theorems 1.2 and 1.3 in \cite{HZZY}  along with Theorem \ref{mainthm13}, we can establish the laws of the iterated logarithm for $\hat{G}_n$  and $\check{G}_n$.

\begin{theorem}\label{limsup}
	Suppose that $\mathbb{E}(X_1^2)<\infty$. 
	\begin{itemize}
		\item  [1.]If $ p \in [0,1/2)$,  then
		\be\label{Gn1}
			\limsup_{n\rightarrow \infty} \frac {|\hat{G}_n-m_1n/2|}{\sqrt{2n\log\log n}}  =\frac{\sqrt{2}\sigma}{\sqrt{3(2-p)(1-2p)}}\quad\mathrm{a.s.}
		\ee
		\item  [2.]If $ p =1/2$, then
		\be\label{Gn2}
		\limsup_{n\rightarrow \infty}\frac {|\hat{G}_n-m_1n/2|}{\sqrt{2n\log n\log\log \log n}} =\frac{2\sigma}{3}\quad\mathrm{a.s.}
		\ee
		\item [3.] If $p\in [0,1)$, then 
		\be\label{Gn3}
			\limsup_{n\rightarrow \infty} \frac {|\check{G}_n-\check{\mu}n/2| }{\sqrt{2n\log\log n}}
			=\frac{\sqrt{2}\check{\sigma}}{\sqrt{3(2+p)(1+2p)}}\quad\mathrm{a.s.}
		\ee
	\end{itemize}
\end{theorem}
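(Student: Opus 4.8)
The plan is to deduce Theorem \ref{limsup} from Theorem \ref{mainthm13} by supplying the strong invariance principles for step-reinforced random walks established by Hu and Zhang \cite{HZZY}. For item 1 (so $p\in[0,1/2)$), I would set $M_n=\hat S_n-nm_1$. Under $\mathbb E(X_1^2)<\infty$, Theorems 1.2 and 1.3 of \cite{HZZY} supply a standard Brownian motion $\{W(t),t>0\}$ together with positive sequences $\{a_n\}$, $\{b_n\}$, regularly varying with indices $\rho_1=p$ and $\rho_2=1-2p$ respectively, for which
\[
\frac{M_n}{a_n}-W(b_n)=o\!\left(\sqrt{b_n\log\log b_n}\right)\quad\mathrm{a.s.};
\]
this is exactly \eqref{Mnbn1}. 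One also reads off from \cite{HZZY} that $a_n^2b_n\sim\frac{\sigma^2}{1-2p}\,n$ (matching the normalization in \eqref{ASC1}) and, since $b_n$ behaves like a fixed positive power of $n$, that $\log\log b_n\sim\log\log n$. Because $b_n$ is regularly varying with index $\rho_2\ge 0$ it satisfies $b_n\uparrow\infty$ and $b_{n+1}/b_n\to1$, so all hypotheses of Theorem \ref{mainthm13} hold.

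Next I would apply Theorem \ref{mainthm13} with these $\rho_1,\rho_2$. Since $1+\rho_1+\rho_2=2-p$ and $2+2\rho_1+\rho_2=3$, it gives
\[
\limsup_{n\to\infty}\frac{1}{\sqrt{2a_n^2b_n\log\log b_n}}\Big|\frac1n\sum_{k=1}^nM_k\Big|=\frac{\sqrt2}{\sqrt{3(2-p)}}\quad\mathrm{a.s.}
\]
The remaining step is to recast both sides into the shape of \eqref{Gn1}: on the one hand $\sqrt{2a_n^2b_n\log\log b_n}\sim\frac{\sigma}{\sqrt{1-2p}}\sqrt{2n\log\log n}$ by the asymptotics above; on the other hand $\frac1n\sum_{k=1}^nM_k=\hat G_n-\frac{m_1(n+1)}{2}=\bigl(\hat G_n-\tfrac{m_1n}{2}\bigr)-\tfrac{m_1}{2}$, and the additive constant $\tfrac{m_1}{2}$ washes out after division by $\sqrt{2n\log\log n}\to\infty$. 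Combining these, $\limsup_n\frac{|\hat G_n-m_1n/2|}{\sqrt{2n\log\log n}}=\frac{\sigma}{\sqrt{1-2p}}\cdot\frac{\sqrt2}{\sqrt{3(2-p)}}=\frac{\sqrt2\,\sigma}{\sqrt{3(2-p)(1-2p)}}$, which is the right-hand side of \eqref{Gn1}.

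Items 2 and 3 I would handle by the same recipe, changing only the exponents. For $p=1/2$, the invariance principle of \cite{HZZY} for $M_n=\hat S_n-nm_1$ comes with $a_n$ regularly varying of index $\rho_1=1/2$ and $b_n$ slowly varying ($\rho_2=0$, $b_n\sim c\log n$), whence $a_n^2b_n\sim\sigma^2n\log n$ and $\log\log b_n\sim\log\log\log n$; then $1+\rho_1+\rho_2=3/2$, $2+2\rho_1+\rho_2=3$, Theorem \ref{mainthm13} produces the constant $\sqrt2((3/2)\cdot3)^{-1/2}=2/3$, and the same rescaling gives \eqref{Gn2}. For $\check S_n$ with $p\in[0,1)$ I would take $M_n=\check S_n-n\check\mu$; then \cite{HZZY} gives $a_n$ of index $\rho_1=-p>-1$ and $b_n$ of index $\rho_2=1+2p>0$, so $a_n^2b_n\sim\frac{\check\sigma^2}{1+2p}n$, $1+\rho_1+\rho_2=2+p$, $2+2\rho_1+\rho_2=3$, Theorem \ref{mainthm13} yields the constant $\sqrt2(3(2+p))^{-1/2}$, and using $\frac1n\sum_{k=1}^n(\check S_k-k\check\mu)=\check G_n-\frac{\check\mu n}{2}-\frac{\check\mu}{2}$ one arrives at \eqref{Gn3}.

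The only real input is the strong invariance principle of \cite{HZZY}, and the single point that needs genuine checking is that under just $\mathbb E(X_1^2)<\infty$ its error term is $o(a_n\sqrt{b_n\log\log b_n})$, i.e. that it already yields \eqref{Mnbn1} (this is exactly why a second moment suffices for Theorem \ref{limsup} while Theorem \ref{mainthm1} needed $2+\delta$: condition \eqref{Mnbn1} is strictly weaker than \eqref{Mnbn}). Beyond that, the work is purely bookkeeping — reading off $\rho_1,\rho_2$ and the asymptotics of $a_n^2b_n$ and $\log\log b_n$ from \cite{HZZY}, noting that $b_{n+1}/b_n\to1$ is automatic for regularly varying $b_n$, and discarding the negligible $O(1)$ shift between $\frac1n\sum_{k=1}^nM_k$ and the centered center of mass — so the main obstacle is careful tracking of constants rather than any conceptual difficulty.
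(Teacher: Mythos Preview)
Your proposal is correct and matches the paper's own proof essentially line for line: the paper sets $M_n=\hat S_n-m_1n$, takes $a_n=\sigma n^{p}/\sqrt{1-2p}$, $b_n=n^{1-2p}$ for $p\in[0,1/2)$ (and $a_n=\sigma\sqrt n$, $b_n=\log n$ for $p=1/2$), cites Theorem~1.2 of \cite{HZZY} to obtain \eqref{Mnbn1}, and then appeals to Theorem~\ref{mainthm13}, leaving item~3 to the analogous computation with Theorem~1.3 of \cite{HZZY}. Your constant-tracking (including the harmless $m_1/2$ shift coming from $\tfrac1n\sum_{k=1}^n k=(n+1)/2$) is in fact more explicit than what the paper writes out.
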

\begin{remark}  
Bercu and Laulin \cite{Be2021} studied the asymptotic properties of the center of mass of an ERW  in $\mathbb{R}^d$.	 According to  Theorem \ref{limsup}, we can also derive the law of the iterated logarithm for  	the center of mass. Specifically, if $q=3/4$,
	 this result  aligns with (2.12)  in \cite{Be2021} for $d=1$. If $q \in (0, 3/4)$,  considering
$({S}_n)$ as  the ERW  with parameter $q$, Theorem \ref{limsup} implies that
\begin{equation}\label{LERW}
	\limsup_{n\rightarrow \infty}\frac {1}{\sqrt{2n\log \log n}}\Big|\frac{1}{n}\sum_{k=1}^n S_k\Big| =\frac{\sqrt{2}}{\sqrt{3(3-2q)(3-4q)}}\quad\mathrm{a.s.}
\end{equation} 
In contrast,  Theorem 2.2 in Bercu and Laulin \cite{Be2021} provided  the following upper bound
for $d=1$:
\begin{equation*}
	\limsup_{n\rightarrow \infty} \frac {1}{\sqrt{2n\log\log n}} \Big|\frac{1}{n}\sum_{k=1}^n S_k\Big| \le \frac{\sqrt{3}+\sqrt{3-4q}}{\sqrt{12q^2(3-4q)}}\quad\mathrm{a.s.}
\end{equation*}
where the expression on the right-hand side is srictly larger than that on the right-hand side of (\ref{LERW})
for $q\in (0,3/4)$.
\end{remark}

\section{Proof of Theorem \ref{mainthm2} } \label{sec3}
\begin{lemma}\label{leW}
If $b_{n+1}-b_{n}=o(b_n/\log\log b_n)$, then
\bestar
\sup_{b_n\le t\le b_{n+1}} \Big|\frac{W(t)}{\sqrt{t}}-\frac{W(b_n)}{\sqrt{b_n}}\Big|\rightarrow 0\quad\mathrm{a.s.}
\eestar
\end{lemma}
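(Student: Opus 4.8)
The plan is to split the increment of $W(t)/\sqrt t$ over $[b_n,b_{n+1}]$ into a ``fresh Brownian increment'' and a ``rescaling'' part, and to control the former---the delicate one---by grouping the intervals $[b_n,b_{n+1}]$ according to the dyadic block that contains $b_n$, so that a single Brownian scaling applies uniformly on each block. For $b_n\le t\le b_{n+1}$ write
\[
\frac{W(t)}{\sqrt t}-\frac{W(b_n)}{\sqrt{b_n}}=\frac{W(t)-W(b_n)}{\sqrt t}+W(b_n)\Bigl(\frac1{\sqrt t}-\frac1{\sqrt{b_n}}\Bigr).
\]
For the rescaling term, $0\le \frac1{\sqrt{b_n}}-\frac1{\sqrt t}=\frac{\sqrt t-\sqrt{b_n}}{\sqrt{tb_n}}\le\frac{b_{n+1}-b_n}{2b_n^{3/2}}$ for $b_n\le t\le b_{n+1}$, so its modulus is at most $\frac{|W(b_n)|}{\sqrt{b_n}}\cdot\frac{b_{n+1}-b_n}{2b_n}$. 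By the law of the iterated logarithm for $W$, $|W(b_n)|/\sqrt{b_n}=O(\sqrt{\log\log b_n})$ a.s., whereas $(b_{n+1}-b_n)/b_n=o(1/\log\log b_n)$ by hypothesis; hence this term tends to $0$ a.s.

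For the increment term, bound $|W(t)-W(b_n)|/\sqrt t\le b_n^{-1/2}\sup_{b_n\le t\le b_{n+1}}|W(t)-W(b_n)|$, so it suffices to show that this last quantity tends to $0$ a.s. Put $\delta_j:=\sup\{(b_{n+1}-b_n)/b_n:\,2^j\le b_n<2^{j+1}\}$, with $\delta_j:=0$ if there is no such $n$. Since $\log\log b_n\ge\tfrac12\log j$ once $b_n\ge 2^j$, and $\min\{n:b_n\ge 2^j\}\to\infty$ as $j\to\infty$, the hypothesis $b_{n+1}-b_n=o(b_n/\log\log b_n)$ forces $\delta_j=o(1/\log j)$. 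For $j$ large and $b_n\in[2^j,2^{j+1})$, the interval $[b_n,b_{n+1}]$ lies in $[2^j,2^{j+2}]$ and has length at most $2\delta_j\cdot 2^j$, so
\[
b_n^{-1/2}\sup_{b_n\le t\le b_{n+1}}|W(t)-W(b_n)|\ \le\ 2^{-j/2}\,\Omega_j,\qquad \Omega_j:=\sup_{\substack{2^j\le s<t\le 2^{j+2}\\ t-s\le 2\delta_j 2^j}}|W(t)-W(s)|.
\]
By Brownian scaling, $\Omega_j$ has the distribution of $2^{j/2}\,\omega_j$, where $\omega_j:=\sup\{|\widetilde W(v)-\widetilde W(u)|:\,1\le u<v\le 4,\ v-u\le 2\delta_j\}$ for a standard Brownian motion $\widetilde W$. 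Covering $[1,4]$ by $O(1/\delta_j)$ overlapping intervals of length $4\delta_j$ and using the reflection-principle estimate $\mathbb P(\sup_{[0,1]}|\widetilde W|>x)\le C e^{-x^2/2}$ on each gives, for every $\varepsilon>0$,
\[
\mathbb P(\Omega_j>\varepsilon 2^{j/2})=\mathbb P(\omega_j>\varepsilon)\ \le\ \frac{C}{\delta_j}\,e^{-c\varepsilon^2/\delta_j}
\]
for an absolute constant $c>0$. Since $\delta_j=o(1/\log j)$, the right-hand side is $\le j^{-2}$ for all large $j$, so $\sum_j\mathbb P(\Omega_j>\varepsilon 2^{j/2})<\infty$; by Borel--Cantelli, $\Omega_j=o(2^{j/2})$ a.s., and therefore $b_n^{-1/2}\sup_{b_n\le t\le b_{n+1}}|W(t)-W(b_n)|\to0$ a.s. Combining the two parts establishes the lemma.

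The main obstacle is precisely the increment term: a naive Borel--Cantelli applied directly over $n$ breaks down when $\{b_n\}$ grows slowly (for instance $b_n=n$), because then $\log b_n$ increases too slowly for the oscillation probabilities to be summable in $n$. Passing to dyadic blocks is exactly what converts the hypothesis $b_{n+1}-b_n=o(b_n/\log\log b_n)$ into the window-ratio bound $\delta_j=o(1/\log j)$, for which $\delta_j^{-1}e^{-c\varepsilon^2/\delta_j}$ becomes summable in $j$. (Equivalently, the whole statement may be read as a modulus-of-continuity property, on windows of length $\log(b_{n+1}/b_n)\to0$, of the stationary Ornstein--Uhlenbeck process $V(u)=e^{-u/2}W(e^u)$, since $W(t)/\sqrt t=V(\log t)$.)
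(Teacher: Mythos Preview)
Your proof is correct and shares the paper's overall architecture: decompose $W(t)/\sqrt t-W(b_n)/\sqrt{b_n}$ into the increment piece $(W(t)-W(b_n))/\sqrt t$ and the rescaling piece $W(b_n)(t^{-1/2}-b_n^{-1/2})$, and kill the latter with the LIL exactly as the paper does. The genuine difference is in how you treat the increment term $I_{1n}=b_n^{-1/2}\sup_{[b_n,b_{n+1}]}|W(t)-W(b_n)|$. The paper simply invokes the Cs\"org\H{o}--R\'ev\'esz increment theorem (their Theorem~1.2.1): taking $a_T=\varepsilon T/\log\log T$ yields $\limsup_n I_{1n}\le\sqrt{2\varepsilon}$ a.s., and one lets $\varepsilon\downarrow0$. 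You instead reconstruct the needed modulus-of-continuity estimate from first principles, via dyadic blocking of the $b_n$'s, Brownian scaling to $[1,4]$, a reflection-principle tail bound, and Borel--Cantelli over the block index $j$. Your route is longer but entirely self-contained and makes explicit precisely where the hypothesis $b_{n+1}-b_n=o(b_n/\log\log b_n)$ is used: it converts to $\delta_j=o(1/\log j)$, which is exactly the threshold at which $\delta_j^{-1}e^{-c\varepsilon^2/\delta_j}$ becomes summable in $j$. The paper's one-line citation is cleaner but buries this dependence inside the cited result. Your closing Ornstein--Uhlenbeck remark is a pleasant alternative perspective not in the paper.
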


\begin{proof} Observe that
\be\label{I12}
\sup_{b_n\le t\le b_{n+1}} \Big|\frac{W(t)}{\sqrt{t}}-\frac{W(b_n)}{\sqrt{b_n}}\Big|&\le& 
\frac{1}{\sqrt{b_n}}
\sup_{b_n\le t\le b_{n+1}}|W(t)-W(b_n)|+|W(b_n)|\Big(\frac{1}{\sqrt{b_{n}}}-\frac{1}{\sqrt{b_{n+1}}}\Big)\nonumber\\
&:=&I_{1n}+I_{2n}.
\ee
%
%If $b_n-b_{n-1}=o(b_n/\sqrt{\log n})$, then $b_n/b_{n-1}\rightarrow 1$ and
% $b_n<2^n$ (or equivalently $\log \log b_n< \log n+\log\log 2$) for sufficiently large $n$. Hence $b_n-b_{n-1}=o(b_n/\sqrt{\log\log b_n})$.

By the law of iterated logarithm for a standard Brownian motion (see Theorems 1.3.1 and $1.3.1^*$ in \cite{CR1981}), we have
\be
\limsup_{T\rightarrow\infty} \frac{\sup_{0\le t\le T}|W(t)|}{\sqrt{2T\log \log T}}=\limsup_{T\rightarrow\infty} \frac{|W(T)|}{\sqrt{2T\log \log T}}= 1\quad\mathrm{a.s.},  \label{LIL}
\ee
 and hence, by noting that $b_{n+1}/b_{n}\rightarrow 1$ and
 $b_{n+1}-b_{n}=o(b_n/\sqrt{\log\log b_n})$,
\be\label{I2n}
I_{2n} =\frac{|W(b_n)|(b_{n+1}-b_{n})}{\sqrt{b_nb_{n+1}}(\sqrt{b_{n+1}}+\sqrt{b}_{n})}
\le \frac{|W(b_n)|(b_{n+1}-b_{n})}{2b_{n}^{3/2}}\rightarrow 0\quad\mathrm{a.s.}
\ee

For any $\varepsilon>0$, there exists $n_0=n_0(\varepsilon)\in \mathbb{N}$
such that $b_{n+1}-b_n\le \varepsilon b_n/\log\log b_n$ for any $n\ge n_0$.
Hence for $n\ge n_0$,
\bestar
I_{1n}\le \frac{1}{\sqrt{b_n}} \sup_{0\le t\le \varepsilon b_n/\log\log b_n}|W(b_n+t)-W(b_n)|.
\eestar
By applying Theorem 1.2.1 in \cite{CR1981}, we have 
$\limsup_{n\rightarrow \infty} I_{1n}\le \sqrt{2\varepsilon}$~a.s.
Hence $I_{1n}\rightarrow 0$~a.s. by the arbitrariness of $\varepsilon \in (0,1)$.
This together with (\ref{I12}) and (\ref{I2n}) implies Lemma \ref{leW}.
\end{proof}

\begin{proof}
[Proof of Theorem \ref{mainthm2}]
By (47) in  \cite{B2024},  we have that for any $f\in \mathcal{C}_e(\mathbb{R})$,
\be
\frac{1}{\log b_n} \int_A^{b_n} \frac{1}{t} f\Big(\frac{W(t)}{\sqrt{t}}\Big)dt
\rightarrow \mathbb{E} f(\mathbf{Z})\quad\mathrm{a.s.}   \label{ASCLTBM}
\ee
holds for any fixed $A>0$.
Define $d_k=1-b_{k-1}/b_{k}$, then $d_k\rightarrow 0$ as $k\rightarrow \infty$. For any $\varepsilon>0$, there exists $k_0=k_0(\varepsilon)\in \mathbb{N}$
such that $\sup_{k>k_0} d_k<\varepsilon$. 
Therefore, 
\be\label{epl}
&&\frac{1}{\log b_n} \Big| \int_{b_{k_0}}^{b_n} \frac{1}{t} f\Big(\frac{W(t)}{\sqrt{t}}\Big)dt-\sum_{k=k_0+1}^{n}\int_{b_{k-1}}^{b_{k}} \frac{1}{b_k} f\Big(\frac{W(t)}{\sqrt{t}}\Big)dt\Big|\nonumber\\
&\le&
  \frac{\sup_{k>k_0} d_k}{\log b_n}\int_{b_{k_0}}^{b_n} \frac{1}{t} \Big|f\Big(\frac{W(t)}{\sqrt{t}}\Big)\Big|dt
\le  \frac{\varepsilon }{\log b_n}\int_{b_{k_0}}^{b_n} \frac{1}{t} \Big|f\Big(\frac{W(t)}{\sqrt{t}}\Big)\Big|dt
\rightarrow \varepsilon \mathbb{E}|f(\mathbf{Z})|\quad\mathrm{a.s.} \nonumber \qquad
\ee
By the arbitrariness of $\varepsilon>0$,
this together with (\ref{ASCLTBM}) implies that for any $f\in \mathcal{C}_e(\mathbb{R})$,
\be \label{TnfW}
T_n(f, W):=\frac{1}{\log b_n}\sum_{k=2}^{n}\int_{b_{k-1}}^{b_{k}} \frac{1}{b_k} f\Big(\frac{W(t)}{\sqrt{t}}\Big)dt \rightarrow \mathbb{E} f(\mathbf{Z})\quad\mathrm{a.s.} 
\ee

Define
\bestar
T_n(f, M)=\frac{1}{\log b_n}\sum_{k=2}^{n}\frac{b_k-b_{k-1}}{b_k}f\Big(\frac{M_k}{\sqrt{b_k}}\Big).
\eestar
Applying Lemma \ref{leW} yields that
\bestar
\sup_{b_{n-1}\le t\le b_{n}} \Big|\frac{W(t)}{\sqrt{t}}-\frac{W(b_n)}{\sqrt{b_n}}\Big|\rightarrow 0\quad\mathrm{a.s.}
\eestar
Hence by (\ref{Mnbn}), 
\be
\sup_{b_{n-1}\le t\le b_{n}} \Big|\frac{W(t)}{\sqrt{t}}-\frac{M_n}{\sqrt{b_n}}\Big|\rightarrow 0\quad\mathrm{a.s.}   \label{diffWM}
\ee

Let $f$ be a uniformly continuous function.
Then  we have
\bestar
D_n:=\sup_{b_{n-1}\le t\le b_{n}} \Big| f\Big(\frac{M_n}{\sqrt{b_n}}
\Big)-f\Big(\frac{W(t)}{\sqrt{t}}\Big) \Big|\rightarrow 0~~\mathrm{a.s.}
\eestar
By noting that 
\begin{equation*}
	\sum_{k=2}^{n}\frac{b_k-b_{k-1}}{b_k}=\sum_{k=2}^{n} \int_{b_{k-1}}^{b_k}\frac{1}{b_k} dt\sim \int_1^{b_n}
	\frac{1}{t} dt= \log b_n, \label{sumdiffb}
\end{equation*}
and applying Toeplitz's lemma, we have
\bestar
	|T_n(f, M)-T_n(f, W)|	\le\frac{1}{\log b_n}\sum_{k=2}^{n}\frac{b_k-b_{k-1}}{b_k} \cdot D_k
\rightarrow 0~~~~\mathrm{a.s.} 
	\eestar
This, together with (\ref{TnfW}),  implies that (\ref{ASCLTa1})  holds for every uniformly continuous function $f$.

Let $h(x)=e^{\gamma x^2}$ for $x\in \mathbb{R}$. For any $\varepsilon>0$, we  define $h_1^{(\varepsilon)}(x)=h(|x|+\varepsilon)$ and
\bestar
h_2^{(\varepsilon)}(x)=\left\{\begin{array}{ll}
h(|x|-\varepsilon), & |x|\ge \varepsilon;\\
1, & |x|<\varepsilon.
\end{array}
\right.
\eestar
By (\ref{diffWM}),  there exists $\Omega_0$
such that $\mathbb{P}(\Omega_0)=1$   and for any $\epsilon>0$ and any $\omega\in \Omega_0$,   we have
\begin{equation*}
	\sup_{b_{n-1}\le t\le b_{n}}\left|\frac{M_n(\omega)}{\sqrt{b_n}}-\frac{W(t,\omega)}{\sqrt{t}}\right|<\epsilon
\end{equation*}
 for  $n\ge n_0$ with some
$n_0=n_0(\omega, \varepsilon)\in \mathbb{N}$.
According to the definition of $h_1^{(\varepsilon)}(x)$ and $h_2^{(\varepsilon)}(x)$,  we get that for all $\omega\in \Omega_0, ~n \ge  n_0$ and $t\in [b_{n-1}, b_n]$,
\bestar
 h_2^{\varepsilon}\Big(\frac{W(t,\omega)}{\sqrt{t}}\Big)
\le h\Big(\frac{M_n(\omega)}{\sqrt{b_n}}\Big)
\le h_1^{\varepsilon}\Big(\frac{W(t,\omega)}{\sqrt{t}}\Big).
\eestar
Hence by (\ref{TnfW}),
\bestar
\mathbb{E}(h_2^{\varepsilon}(\mathbf{Z}))&=&\liminf_{n\rightarrow \infty} T_n(h_2^{\varepsilon}, W)\le \liminf_{n\rightarrow \infty}T_n(h, M)\\
&\le&\limsup_{n\rightarrow \infty}T_n(h, M)\le \limsup_{n\rightarrow \infty} T_n(h_1^{\varepsilon}, W)
=\mathbb{E}(h_1^{\varepsilon}(\mathbf{Z}))\quad\mathrm{a.s.}
\eestar
By noting that
$\lim_{\varepsilon \rightarrow 0}\mathbb{E}(h_1^{\varepsilon}(\mathbf{Z}))=\lim_{\varepsilon \rightarrow 0}\mathbb{E}(h_2^{\varepsilon}(\mathbf{Z}))=\mathbb{E}(h(\mathbf{Z}))$, 
we have
\be\label{ASCLTh}
T_n(h, M)\rightarrow \mathbb{E}(h(\mathbf{Z}))\quad\mathrm{a.s.}
\ee

As follows, we assume that  $f\in \mathcal{C}_e(\mathbb{R})$.
For any $m\in \mathbb{N}$,
define 
\bestar
g_m(x)=\left\{\begin{array}{ll}
1, & |x|\le m;\\
m+1-|x|, & m<|x|<m+1;\\
0, & |x|>m+1.
\end{array}
\right.
\eestar
Recall that $h(x)=e^{\gamma x^2}$  and define 
\bestar 
h_m(x)=(f(x)-h(x))g_m(x),~\quad~
f_m(x)=h_m(x)+h(x).
\eestar
Note that $h_m(x)$ is uniformly continuous since it is  continuous with a compact support.  Hence
(\ref{ASCLTa1}) holds for $h_m$.
This together with (\ref{ASCLTh}) implies $\lim_{n\rightarrow\infty}T_n(f_m, M)
= \mathbb{E}(f_m(\mathbf{Z})) ~\mathrm{a.s.}$ for any fixed $m\in \mathbb{N}$.
Since $0\le g_m(x)\le 1$ and $f(x)\le h(x)$,  we have 
$
h(x)\ge f_m(x)\ge  f(x).
$
Hence  for any fixed $m\in \mathbb{N}$,
\be
\limsup_{n\rightarrow\infty}T_n(f, M)
\le \limsup_{n\rightarrow\infty}T_n(f_m, M)
= \mathbb{E}(f_m(\mathbf{Z}))\quad\mathrm{a.s.}
\label{ASCLTa2}
\ee

By the monotone convergence theorem, we have
$\mathbb{E}(f_m(\mathbf{Z}))\rightarrow \mathbb{E}(f(\mathbf{Z}))$
since $h(x)=e^{\gamma x^2}\ge f_m(x) \downarrow f(x)$.
Hence, by letting  $m\rightarrow \infty$ in (\ref{ASCLTa2}),
\bestar
\limsup_{n\rightarrow\infty}T_n(f, M)
\le \mathbb{E}(f(\mathbf{Z}))\quad\mathrm{a.s.}
\label{ASCLTa3}
\eestar
By replacing $f(x)$ with $-f(x)$, we can obtain that
\bestar
\liminf_{n\rightarrow\infty}T_n(f, M)
\ge \mathbb{E}(f(\mathbf{Z}))\quad\mathrm{a.s.}
\label{ASCLTa4}
\eestar
Hence (\ref{ASCLTa1}) holds for $f\in \mathcal{C}_e(\mathbb{R})$ and the proof of Theorem \ref{mainthm2} is complete.
\end{proof}

\section{Proof of  Theorem \ref{mainthm13}} \label{sec4}

\begin{lemma} \label{lemma4.1}
 Under the conditions of  Theorem \ref{mainthm13}, we have
\be
\Big|\frac {1}{\sqrt{2a_n^2b_n\log\log b_n}}\cdot\frac{1}{n}\sum_{k=1}^n M_k-I_n \Big|	\rightarrow 0\quad\mathrm{a.s.} ,
\label{diffM}
\ee	
where
\be
	I_n= \int_0^1 \frac{t^{\rho_1}W(b_n t^{\rho_2})}{\sqrt{2b_n\log\log b_n}} dt.  \label{defIn}
\ee
\end{lemma}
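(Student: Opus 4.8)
The plan is to transfer the Ces\`aro average $\frac1n\sum_{k=1}^n M_k$ onto a Riemann sum for the integral defining $I_n$, in three steps: first replace $M_k$ by $a_kW(b_k)$ via the strong approximation \eqref{Mnbn1}; then, on a ``bulk'' range $\delta n\le k\le n$, replace $a_k$ and $b_k$ by their regularly varying profiles $a_n(k/n)^{\rho_1}$ and $b_n(k/n)^{\rho_2}$; and finally recognise the outcome as $a_n$ times a Riemann sum of $t\mapsto t^{\rho_1}W(b_nt^{\rho_2})$ on $[\delta,1]$, letting $\delta\downarrow0$ at the end. For the first step write $M_k=a_kW(b_k)+a_k\eta_k$ with $\eta_k:=M_k/a_k-W(b_k)$, so that $\zeta_k:=|\eta_k|/\sqrt{b_k\log\log b_k}\to0$ a.s.\ by \eqref{Mnbn1}. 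Since $c_k:=a_k\sqrt{b_k\log\log b_k}$ is regularly varying of index $\rho_1+\rho_2/2>-1$ (using $\rho_2\ge0$), Karamata's theorem gives $\sum_{k\le n}c_k\sim(1+\rho_1+\rho_2/2)^{-1}\,nc_n\to\infty$, so Toeplitz's lemma yields $\frac1n\sum_{k\le n}a_k|\eta_k|=\frac1n\sum_{k\le n}c_k\zeta_k=o(c_n)=o(a_n\sqrt{b_n\log\log b_n})$. Recalling that $\sqrt{2a_n^2b_n\log\log b_n}\,I_n=a_n\int_0^1 t^{\rho_1}W(b_nt^{\rho_2})\,dt$, it thus remains to approximate $\frac1n\sum_{k\le n}a_kW(b_k)$ by this integral with error $o(a_n\sqrt{b_n\log\log b_n})$.

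Fix $\delta\in(0,1)$ and split the sum at $\delta n$ and the integral at $\delta$. By the law of the iterated logarithm \eqref{LIL} one has $\sup_{0\le v\le T}|W(v)|=O(\sqrt{T\log\log T})$ as $T\to\infty$, and this controls both heads. Applying Karamata's theorem to $c_k$ (and using $a_{\lfloor\delta n\rfloor}/a_n\to\delta^{\rho_1}$, $b_{\lfloor\delta n\rfloor}/b_n\to\delta^{\rho_2}$ and $\log\log b_{\lfloor\delta n\rfloor}\sim\log\log b_n$) gives $\frac1n\sum_{k<\delta n}a_k|W(b_k)|\le C\,\delta^{1+\rho_1+\rho_2/2}\,a_n\sqrt{b_n\log\log b_n}$ for $n$ large; and $a_n\int_0^\delta t^{\rho_1}|W(b_nt^{\rho_2})|\,dt\le\big(\sup_{0\le v\le b_n}|W(v)|\big)\,a_n\int_0^\delta t^{\rho_1}\,dt\le C\,\delta^{1+\rho_1}\,a_n\sqrt{b_n\log\log b_n}$ for $n$ large. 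Since $1+\rho_1>0$ and $1+\rho_1+\rho_2/2>0$, both heads are, after division by $\sqrt{2a_n^2b_n\log\log b_n}$, $O(\delta^{1+\rho_1})$, hence negligible as $\delta\downarrow0$.

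On the bulk $\delta n\le k\le n$, the uniform convergence theorem for regularly varying sequences gives $\sup_{\delta n\le k\le n}|a_k/a_n-(k/n)^{\rho_1}|\to0$ and $\varepsilon_n:=\sup_{\delta n\le k\le n}|b_k/b_n-(k/n)^{\rho_2}|\to0$. Replacing $a_k$ by $a_n(k/n)^{\rho_1}$ costs at most $o(1)\cdot a_n\cdot\max_{k\le n}|W(b_k)|=o(a_n\sqrt{b_n\log\log b_n})$ (by \eqref{LIL}). Since $|b_k-b_n(k/n)^{\rho_2}|\le b_n\varepsilon_n$, the Brownian increment estimates (cf.\ Theorem 1.2.1 in \cite{CR1981}) give $\sup_{\delta n\le k\le n}|W(b_k)-W(b_n(k/n)^{\rho_2})|\le\sup_{0\le t\le b_n}\sup_{0\le h\le b_n\varepsilon_n}|W(t+h)-W(t)|=O\!\big(\sqrt{b_n\varepsilon_n(\log(1/\varepsilon_n)+\log\log b_n)}\big)=o(\sqrt{b_n\log\log b_n})$ a.s., so replacing $W(b_k)$ by $W(b_n(k/n)^{\rho_2})$ costs $o(a_n\sqrt{b_n\log\log b_n})$ after averaging. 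The remaining sum $\frac1n\sum_{\delta n\le k\le n}a_n(k/n)^{\rho_1}W(b_n(k/n)^{\rho_2})$ is $a_n$ times a Riemann sum of $\phi_n(t):=t^{\rho_1}W(b_nt^{\rho_2})$ on $[\delta,1]$; since $t\mapsto t^{\rho_1}$ is Lipschitz there and $|W(b_nt^{\rho_2})|=O(\sqrt{b_n\log\log b_n})$ by \eqref{LIL}, while the oscillation of $t\mapsto W(b_nt^{\rho_2})$ at scale $1/n$ (path increments of size $O(b_n/n)$) is, again by the increment estimates, of order $\sqrt{(b_n/n)(\log n+\log\log b_n)}=o(\sqrt{b_n\log\log b_n})$ because $\log n/n\to0$, the Riemann-sum error is $o(a_n\sqrt{b_n\log\log b_n})$. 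Collecting the three steps, for every fixed $\delta\in(0,1)$,
\[
\limsup_{n\to\infty}\frac1{\sqrt{2a_n^2b_n\log\log b_n}}\Bigl|\frac1n\sum_{k=1}^n M_k-a_n\!\int_0^1 t^{\rho_1}W(b_nt^{\rho_2})\,dt\Bigr|\le C\,\delta^{1+\rho_1}\quad\text{a.s.},
\]
and letting $\delta$ run through a sequence tending to $0$, and intersecting the corresponding full-measure events, yields \eqref{diffM}.

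The substantive point is the bulk step: one must quantify how far the Brownian path can move over increments of sizes $b_n\varepsilon_n$ and $O(b_n/n)$ relative to the normalisation $\sqrt{b_n\log\log b_n}$, and here the Brownian increment results of \cite{CR1981} are exactly what is needed; one has to verify that both scales are $o(\sqrt{b_n\log\log b_n})$, the first because $\varepsilon_n\to0$ and the second because $\log n/n\to0$. The regular-variation bookkeeping (the uniform convergence theorem to license the scaling profiles, Karamata's theorem for the heads and the $o(1)$-weighted sums) is routine, but the split at $\delta n$ is essential, since for $\rho_1<0$ the approximation $a_k\approx a_n(k/n)^{\rho_1}$ fails near $k=0$.
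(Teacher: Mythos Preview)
Your proof is correct and follows essentially the same route as the paper's: split into a head on $[0,\delta]$ and a bulk on $[\delta,1]$, use the uniform convergence theorem for regularly varying sequences on the bulk, control the resulting Brownian increments via Theorem~1.2.1 of \cite{CR1981}, bound the head via the LIL, and handle the strong approximation error by a Karamata/Toeplitz argument. The only organisational differences are that the paper writes $\frac1n\sum_k a_kW(b_k)$ directly as $\int_0^1 a_{[tn]}W(b_{[tn]})\,dt$ (up to a negligible boundary term) and compares the integrands, thereby merging your Riemann-sum step into the bulk estimate, performs the strong approximation last rather than first, and uses Potter's theorem instead of Karamata's for the head; none of this is substantive.
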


\begin{proof}
By convention,  we define $a_0=b_0=0$.
Observe that for any $\varepsilon\in (0,1)$,  we have
\be
&&\int_0^1 \frac{|a_{[tn]}W(b_{[tn]})-a_{n}t^{\rho_1}W(b_{n}t^{\rho_2})|}{\sqrt{a_n^2b_n\log\log b_n}}dt \nonumber\\
&&~~~~\le\int_{\varepsilon}^1 \frac{|a_{[tn]}||W(b_{[tn]})-W(b_{n}t^{\rho_2})|}{\sqrt{a_n^2b_n\log\log b_n}}dt+\int_{\varepsilon}^1 \frac{|a_{[tn]}-a_{n}t^{\rho_1}||W(b_{n}t^{\rho_2})|}{\sqrt{a_n^2b_n\log\log b_n}}dt\nonumber\\
&&~~~~~~~~~~+\int_0^{\varepsilon} \frac{|a_{[tn]}W(b_{[tn]})|+|a_{n}t^{\rho_1}W(b_{n}t^{\rho_2})|}{\sqrt{a_n^2b_n\log\log b_n}}dt \nonumber\\
&&~~~~:=J_{1n}+J_{2n}+J_{3n}.   \label{J123n}
\ee

Applying Theorem 1.5.2 in \cite{BGT1987}  gives that for any fixed  $\varepsilon\in (0,1)$,
\bestar
\sup_{\varepsilon\le t\le 1}\Big|\frac{a_{[tn]}}{a_n}-t^{\rho_1}\Big|\rightarrow 0,\qquad	\sup_{\varepsilon\le t\le 1}\Big|\frac{b_{[tn]}}{b_n}-t^{\rho_2}\Big|\rightarrow 0.
\eestar
By Theorem 1.2.1 in \cite{CR1981},  we have
\bestar
	&& \frac{\sup_{\varepsilon\le t\le 1}|W(b_{[tn]})-W(b_{n}t^{\rho_2})|}{\sqrt{b_n\log\log b_n}}
\rightarrow  0\quad\mathrm{a.s.} 
\eestar
Therefore, by  (\ref{LIL}), we get that for any fixed $\varepsilon\in (0,1)$, 
\be
J_{1n}+J_{2n}&\le& \sup_{\varepsilon\le t\le 1} \frac{|W(b_{[tn]})-W(b_{n}t^{\rho_2})|}{\sqrt{b_n\log\log b_n}}\sup_{\varepsilon\le t\le 1} \frac{a_{[tn]}}{a_n}\nonumber\\
&&~~~~~~~~~~+\sup_{\varepsilon\le t\le 1}\Big|\frac{a_{[tn]}}{a_{n}}-t^{\rho_1}\Big| 
\frac{\sup_{0\le t\le b_n}|W(t)|}{\sqrt{b_n\log\log b_n}} \rightarrow 0~~\mathrm{a.s.}    \label{J12n}
\ee

Recall that  $a_n$  varies regularly with  index  $\rho_1>-1$.
Then  for any $0<\varepsilon<\min\{1, 1+\rho_1\}$,
 by applying Potter's theorem (see Theorem 1.5.6 in \cite{BGT1987}), 
 there exists $n_0\in \mathbb{N}$ such that 
$a_{[tn]}/a_n<2t^{\rho_1-\varepsilon}$ holds for any $n\ge n_0$ and $t\in [n_0/n, 1)$.
Hence for sufficiently large $n$, we have
\be
\int_0^{\varepsilon} \frac{|a_{[tn]}W(b_{[tn]})|}{\sqrt{a_n^2b_n\log\log b_n}}dt
&\le& \frac{\int_{n_0/n}^{\varepsilon} 2t^{\rho_1-\varepsilon} |W(b_{[tn]})|dt}{\sqrt{b_n\log\log b_n}}+\frac{n_0}{n}\frac{\sup_{0\le k\le n_0}|a_{k}W(b_{k})|}{\sqrt{a_n^2b_n\log\log b_n}}\nonumber\\
&\le& \frac{ 2\varepsilon^{1+\rho_1-\varepsilon}}{1+\rho_1-\varepsilon}
\frac{ \sup_{0\le t\le b_n} |W(t)|}{\sqrt{b_n\log\log b_n}}+\frac{n_0}{na_n}\frac{\sup_{0\le k\le n_0}|a_{k}W(b_{k})|}{\sqrt{b_n\log\log b_n}}. \qquad
\label{INT1}
\ee
 Note that $na_n\rightarrow \infty$ since $na_n$ varies regularly with index $1+\rho_1>0$. 
By using   (\ref{LIL}) and (\ref{INT1}),  we have
\bestar
\limsup_{\varepsilon\rightarrow 0}\limsup_{n\rightarrow \infty}\int_0^{\varepsilon} \frac{|a_{[tn]}W(b_{[tn]})|}{\sqrt{a_n^2b_n\log\log b_n}}dt=0~~\mathrm{a.s.}
\eestar
Similarly, we also have
\bestar
\limsup_{\varepsilon\rightarrow 0}\limsup_{n\rightarrow \infty}\frac{\int_0^{\varepsilon} t^{\rho_1}|W(b_{n}t^{\rho_2})|dt}{\sqrt{b_n\log\log b_n}}
\le
\limsup_{\varepsilon\rightarrow 0}\limsup_{n\rightarrow \infty}\frac{\varepsilon^{1+\rho_1}}{1+\rho_1}  \frac{\sup_{0\le t\le b_n}|W(t)|}{\sqrt{b_n\log\log b_n}}=0~~\mathrm{a.s.}
\eestar
Hence
\be
\limsup_{\varepsilon\rightarrow 0}\limsup_{n\rightarrow \infty} J_{3n}=0~~\mathrm{a.s.}  \label{J3n}
\ee

Combining (\ref{J123n}),  (\ref{J12n}) and (\ref{J3n})  yields that
\bestar
\int_0^1 \frac{|a_{[tn]}W(b_{[tn]})-a_{n}t^{\rho_1}W(b_{n}t^{\rho_2})|}{\sqrt{a_n^2b_n\log\log b_n}}dt \rightarrow 0~~\mathrm{a.s.}
\eestar
This implies that
\be
&&\Big|\frac {1}{\sqrt{2a_n^2b_n\log\log b_n}}\cdot\frac{1}{n}\sum_{k=1}^n a_k W(b_k)-I_n \Big|	\nonumber\\
&&~~~~~~\le \int_0^1 \frac{|a_{[tn]}W(b_{[tn]})-a_nt^{\rho_1}W(b_{n}t^{\rho_2})|}{\sqrt{2a_n^2b_n\log\log b_n}} dt+\frac{|W(b_n)|+|W(b_0)|}{n\sqrt{2b_n\log\log b_n}}\rightarrow 0\quad\mathrm{a.s.}, \label{INTa1}
\qquad
\ee
where $I_n$ is defined as in (\ref{defIn}).
By (\ref{Mnbn1}), we can get that
	\begin{equation*}
		\Delta_n:=\frac{1}{\sqrt{b_n\log\log b_n}} \max_{0\le k\le n}|M_{k}/a_k-W(b_{k})|{\rightarrow} 0\quad\mathrm{a.s.},
	\end{equation*}
and hence
	\be
\Big|\frac {1}{\sqrt{2a_n^2b_n\log\log b_n}}\cdot\frac{1}{n}\sum_{k=1}^n (M_k-a_kW(b_{k})) \Big|\le \frac {1}{na_n}\sum_{k=1}^n a_k	\cdot \Delta_n \rightarrow 0\quad\mathrm{a.s.},   \label{INTa2}
	\ee
where we have used the fact that $(\sum_{k=1}^n a_k)/(na_n) \rightarrow 1/(1+\rho_1)$ by Karamata's Tauberian theorem (see, for instance, Theorem 5 in Section XIII.5  of \cite{Feller1971}).
Now (\ref{diffM}) follows by combining (\ref{INTa1}) and (\ref{INTa2}).
\end{proof}

\begin{lemma} \label{lemma4.2}
Define
\be
	\eta(t, s)=\frac{W(ts)}{\sqrt{2t\log\log t}},~~0\le s\le 1, ~t>0.
\label{etats}
	\ee	
If $b_{n+1}/b_n\rightarrow 1$, then we have
\bestar
\sup_{b_n\le t\le b_{n+1}}\sup_{0\le s\le 1} |\eta(t, s)-\eta(b_n, s)|\rightarrow 0~~a.s.
\eestar
\end{lemma}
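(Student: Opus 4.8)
The plan is to split the increment $\eta(t,s)-\eta(b_n,s)$, for $b_n\le t\le b_{n+1}$ and $0\le s\le1$, into a Brownian-fluctuation part and a normalisation part,
\[
\eta(t,s)-\eta(b_n,s)=\frac{W(ts)-W(b_ns)}{\sqrt{2t\log\log t}}+\frac{W(b_ns)}{\sqrt{2b_n\log\log b_n}}\left(\frac{\sqrt{2b_n\log\log b_n}}{\sqrt{2t\log\log t}}-1\right),
\]
and to show that the supremum over $t\in[b_n,b_{n+1}]$ and $s\in[0,1]$ of each part tends to $0$ almost surely; this parallels the proof of Lemma~\ref{leW}. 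Throughout I use that $x\mapsto x\log\log x$ is increasing for $x>e$, so that $\sqrt{2t\log\log t}\ge\sqrt{2b_n\log\log b_n}$ for $t\in[b_n,b_{n+1}]$ and all large $n$.

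For the normalisation part, $\sup_{0\le s\le1}|W(b_ns)|=\sup_{0\le u\le b_n}|W(u)|$, which by the law of the iterated logarithm (\ref{LIL}) is a.s.\ at most $2\sqrt{2b_n\log\log b_n}$ for all large $n$. On the other hand $b_{n+1}/b_n\to1$ gives $\log b_{n+1}/\log b_n\to1$ and hence $\log\log b_{n+1}/\log\log b_n\to1$, so the ratio $\sqrt{2b_n\log\log b_n}/\sqrt{2t\log\log t}$, which is monotone in $t$, satisfies $\sup_{b_n\le t\le b_{n+1}}\bigl|\sqrt{2b_n\log\log b_n}/\sqrt{2t\log\log t}-1\bigr|\to0$. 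Multiplying the two estimates shows the normalisation part is $o(1)$ a.s., uniformly in $t$ and $s$.

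For the fluctuation part, the key point is that since $t\ge b_n$, for every admissible $(t,s)$ the time points $b_ns$ and $ts$ lie in $[0,b_{n+1}]$ and differ by $(t-b_n)s\le b_{n+1}-b_n$; hence
\[
\sup_{b_n\le t\le b_{n+1}}\;\sup_{0\le s\le1}\frac{|W(ts)-W(b_ns)|}{\sqrt{2t\log\log t}}\le\frac{1}{\sqrt{2b_n\log\log b_n}}\sup_{0\le v\le b_n}\;\sup_{0\le h\le b_{n+1}-b_n}|W(v+h)-W(v)|.
\]
Given $\varepsilon\in(0,1)$, for all large $n$ we have $b_{n+1}-b_n\le\varepsilon b_n$, and I would then apply the increment theorem for Brownian motion (Theorem~1.2.1 in \cite{CR1981}) with the proportional window $a_T=\tfrac{\varepsilon}{1+\varepsilon}T$, specialised to $T=T_n:=(1+\varepsilon)b_n\to\infty$ (so that $T_n-a_{T_n}=b_n$, $a_{T_n}=\varepsilon b_n$, and $\log(T_n/a_{T_n})=\log\tfrac{1+\varepsilon}{\varepsilon}$ is constant). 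Since the normalising factor $\bigl(2a_{T_n}(\log(T_n/a_{T_n})+\log\log T_n)\bigr)^{1/2}$ is asymptotic to $\sqrt{2\varepsilon b_n\log\log b_n}$, this yields
\[
\limsup_{n\to\infty}\frac{1}{\sqrt{2b_n\log\log b_n}}\sup_{0\le v\le b_n}\;\sup_{0\le h\le\varepsilon b_n}|W(v+h)-W(v)|\le\sqrt{\varepsilon}\quad\mathrm{a.s.}
\]
Letting $\varepsilon\downarrow0$ along a countable sequence makes the fluctuation part vanish a.s., and combining with the previous paragraph proves the lemma.

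The step I expect to be the main obstacle is the correct invocation of the increment theorem: because $b_{n+1}-b_n$ need not be monotone in $n$, one cannot feed $a_T=b_{n+1}-b_n$ directly into Theorem~1.2.1, and the trick of over-estimating by the constant-proportion window $a_T=cT$ (whose hypotheses hold trivially) is what makes the argument go through, at the cost of the harmless extra factor $\sqrt{\varepsilon}$; even if the constant produced by Theorem~1.2.1 in this proportional regime were not exactly $1$, any absolute bound suffices since it is multiplied by $\sqrt\varepsilon\to0$. The remainder is the same kind of law-of-the-iterated-logarithm bookkeeping already carried out for Lemma~\ref{leW}.
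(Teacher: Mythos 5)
Your proof is correct and follows essentially the same route as the paper: the same split into a Brownian-increment term and a normalisation term, with the increment term controlled by the Csörgő–Révész large-increment theorem after replacing $b_{n+1}-b_n$ by $\varepsilon b_n$ (the paper cites Corollary 1.2.2 rather than Theorem 1.2.1, but the estimate is the same) and the normalisation term controlled by the LIL together with $b_{n+1}/b_n\to 1$, followed by letting $\varepsilon\downarrow 0$.
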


\begin{proof} 
For any $\varepsilon\in (0,1)$, there exists $n_0=n_0(\varepsilon)\in \mathbb{N}$ such that
$b_{n+1}-b_n\le \varepsilon b_n$ holds for any $n\ge n_0$.
Hence for $n\ge n_0$,
\be
&&\sup_{b_n\le t\le b_{n+1}}\sup_{0\le s\le 1} |\eta(t, s)-\eta(b_n, s)|\nonumber\\
&\le &  \frac{\sup\limits_{b_n\le t\le b_{n+1}}\sup\limits_{0\le s\le 1}|W(ts)-W(b_ns)|}{\sqrt{2b_n\log\log b_n}}+\alpha_n\frac{\sup\limits_{0\le t\le b_{n}}|W(t)|}{\sqrt{2b_{n}\log\log b_{n}}} \nonumber\\
&\le&\frac{\sup\limits_{0\le s\le \varepsilon b_n}\sup\limits_{0\le t\le b_{n}}|W(t+s)-W(t)|}{\sqrt{2b_n\log\log b_n}}+\alpha_n\frac{\sup\limits_{0\le t\le b_{n}}|W(t)|}{\sqrt{2b_{n}\log\log b_{n}}}, \label{diffeta}\qquad
\ee
where 
\bestar
\alpha_n=1-\sqrt{b_n\log\log b_n}/\sqrt{b_{n+1}\log\log b_{n+1}}\rightarrow 0 .
\eestar
 By letting $n\rightarrow \infty$ and then $\varepsilon\rightarrow 0$ in  (\ref{diffeta}),  the desired result follows from (\ref{LIL}) and Corollary 1.2.2 in \cite{CR1981}.
\end{proof}

\begin{proof}[Proof of Theorem \ref{mainthm13}]
If $\rho_2>0$, then
	\begin{eqnarray}
	I_n=\rho_2^{-1}\int_{0}^1 s^{(1+\rho_1-\rho_2)/\rho_2}\eta(b_n, s) ds, \nonumber
	\end{eqnarray}
where $I_n$ and $\eta(b_n, s)$ are defined as in (\ref{defIn}) and (\ref{etats}), respectively.
By Lemma \ref{lemma4.2} and Strassen's law of iterated logarithm for Brownian motion (see Theorem 1.3.2  in  \cite{CR1981}), the process $\{\eta(b_n, s), 0\le s\le 1\}$	
	is relatively compact in $C[0,1]$ with probability one, and  the set of its limit points is 
	\bestar
	\mathscr{S}=\Big\{f: f ~\mbox{is an absolutely continuous function on } [0,1], ~f(0)=0, ~\int_0^1 (f'(x))^2 dx\le 1 \Big\}.
	\eestar
	Thus the set of the limit points of $\{I_n, n\ge 0\}$ is
	\bestar
	\mathscr{I}_I= \Big\{ \rho_2^{-1}\int_{0}^1 x^{(1+\rho_1-\rho_2)/\rho_2}f(x)dx: f\in \mathscr{S} \Big\}.
	\eestar
	Note that for any $ f\in \mathscr{S}$,	
	\bestar
	\rho_2^{-1}\int_{0}^1 x^{(1+\rho_1-\rho_2)/\rho_2}f(x)dx &=& \rho_2^{-1}\int_{0}^1 x^{(1+\rho_1-\rho_2)/\rho_2} \int_0^x f'(t)dt dx\\
	&=&\rho_2^{-1}\int_{0}^1 f'(t)  \int_t^1 x^{(1+\rho_1-\rho_2)/\rho_2} dx dt\\
	&=&(1+\rho_1)^{-1} \int_{0}^1 f'(t) (1-t^{(1+\rho_1)/\rho_2}) dt\\
	&\le&  (1+\rho_1)^{-1} \Big(\int_0^1 (f'(t))^2 dt \int_0^1 (1-t^{(1+\rho_1)/\rho_2}) ^2 dt \Big)^{1/2}\\
	&\le&\sqrt{2}((1+\rho_1+\rho_2)(2+2\rho_1+\rho_2))^{-1/2},   \eestar
	and the above equalities hold when $f'(t)= (1-t^{(1+\rho_1)/\rho_2}) /(\int_0^1   (1-x^{(1+\rho_1)/\rho_2}) ^2 dx)^{1/2} $ and $f(0)=0$.
 We can  get the lower bound similarly,  and hence 
	\bestar
	\mathscr{I}_I=\Big[-\sqrt{2}((1+\rho_1+\rho_2)(2+2\rho_1+\rho_2))^{-1/2}, ~~\sqrt{2}((1+\rho_1+\rho_2)(2+2\rho_1+\rho_2))^{-1/2}\Big].
	\eestar 
	This implies that $	\limsup_{n\rightarrow \infty}|I_n| 
=\sqrt{2}((1+\rho_1+\rho_2)(2+2\rho_1+\rho_2))^{-1/2}$, which proves  (\ref{barycen})  for $\rho_2>0$ by (\ref{diffM}).
	
	If
$\rho_2=0$,  then it follows from (\ref{LIL}) and Lemma \ref{lemma4.2}  that
\bestar
	\limsup_{n\rightarrow \infty}|I_n| =(1+\rho_1)^{-1}\limsup_{n\rightarrow \infty}	 \frac{|W(b_{n})|}{\sqrt{2b_n\log\log b_n}} =(1+\rho_1)^{-1}.
\eestar
Therefore,  (\ref{barycen}) also holds for $\rho_2=0$.
\end{proof}

\section{Proofs of Theorems \ref{mainthm1}-\ref{limsup}} \label{sec5}

Suppose that $\alpha>0$ is a constant.  Define
\be\label{trZn}
Z_n=X_n {I}(|X_n| \le n^{\alpha}),~~~~~~ n\ge 1,
\ee
and let 
$\{\hat{Z}_n,\check{Z}_n,\hat{S}^*_n,\check{S}^*_n, n\ge 1\}$ 
be the corresponding variables by replacing $\{X_n, ~n\ge 1\}$ in the algorithms of 
$\{\hat{X}_n,\check{X}_n, \hat{S}_n, \check{S}_n, n\ge 1\}$ by $\{Z_n, n\ge 1\}$.
Let $\mathscr{F}_0=\{\emptyset, \Omega\},~\mathscr{F}_1=\sigma(X_1)$ and
$$ \mathscr{F}_n = \sigma(\varepsilon_{2},\cdots,\varepsilon_{n},U_2,\cdots,U_n,X_1,\cdots,X_n),~~~~n\ge 2. $$

Let $\hat{\gamma}_n=(n+p)/n$ for $n\geq1$,
$\hat{a}_1=1$  and
\begin{equation*} \label{an}
	\hat{a}_n = \prod_{k=1}^{n-1}\hat{\gamma}_k^{-1} = 
	\frac{\Gamma(n)\Gamma(1+p)}{\Gamma(n+p)},~~~~n\ge 2,
\end{equation*}
where $\Gamma(t)=\int_0^{\infty} x^{t-1}e^{-x}dx,
t>0$, is the Gamma function.
As in \cite{HZZY}, 
 $\{\hat{M}^*_n:=\hat{a}_n (\hat{S}^*_n-\mathbb{E}(\hat{S}^*_n)),~ \mathscr{F}_n, n\ge 1\}$
is a  martingale with martingale differences $\{\hat{Y}_n, n\ge 1\}$,
where
  $\hat{Y}_1:=\hat{M}^*_1= \hat{Z}_{1} - \mathbb{E}( \hat{Z}_{1})$ and for $n\ge 2$,
\begin{eqnarray}\label{hatYn}
\hat{Y}_n:=\hat{M}^*_{n}-\hat{M}^*_{n-1}= \hat{a}_n ( \hat{Z}_{n} - \mathbb{E}( \hat{Z}_{n} | \mathscr{F}_{n-1} )).
\end{eqnarray}

  Similarly,  we  define $\check{\gamma}_n = (n-p)/n$  for $n\ge 1$,~ $\check{a}_1=1$  and
\begin{equation*} \label{checkan}
	\check{a}_n = \prod_{k=1}^{n-1}\check{\gamma}_k^{-1} =\frac{\Gamma(n)\Gamma(1-p)}{\Gamma(n-p)},~~~~n\ge 2.
\end{equation*}
Then  $\{\check{M}^*_n:=\check{a}_n (\check{S}^*_n-\mathbb{E}(\check{S}^*_n)), \mathscr{F}_n, n\ge 1\}$
is a  martingale with martingale differences $\{\check{Y}_n, n\ge 1\}$,
where
  $\check{Y}_1:=\hat{M}^*_1= \check{Z}_{1} - \mathbb{E}( \check{Z}_{1})$, and for $n\ge 2$,
\begin{eqnarray*}
\check{Y}_n:=\check{M}^*_{n}-\check{M}^*_{n-1}= \check{a}_n ( \check{Z}_{n} - \mathbb{E}( \check{Z}_{n} | \mathscr{F}_{n-1} )).
\end{eqnarray*}

\begin{lemma}\label{lemma ansn}
	Let
	$
	\hat{s}_n^2=\sum_{k=1}^n \hat{a}_k^2$ 	 and
	$\check{s}_n^2=\sum_{k=1}^n \check{a}_k^2$. Then we have
	\begin{eqnarray}\label{hatas}
		\frac{\hat{a}_n}{\hat{s}_n}	=\left\{
		\begin{array}{ll}
			\sqrt{1-2p}\ n^{-1/2}\, (1+O(n^{2p-1})), & p \in [0, 1/2),\\
			(n\log n)^{-1/2}\, (1+ O((\log n)^{-1})), & p=1/2,
		\end{array}\right.
	\end{eqnarray}
	and
	\begin{equation}\label{checkas}
		\frac{\check{a}_n}{\check{s}_n}	=
		\sqrt{1+2p}\ n^{-1/2}\, (1+O(n^{-1})),\ \  p\in[0,1).
	\end{equation}
\end{lemma}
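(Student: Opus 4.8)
The proof rests entirely on elementary asymptotics of $\hat a_n,\check a_n$ together with integral comparison for power sums; none of the probabilistic machinery is needed. The case $p=0$ is immediate, since then $\hat a_n\equiv\check a_n\equiv1$ and $\hat s_n^2=\check s_n^2=n$, so assume henceforth $p\in(0,1)$. First I would invoke Stirling's formula for ratios of Gamma functions: from $\hat a_n=\Gamma(1+p)\Gamma(n)/\Gamma(n+p)$ and $\check a_n=\Gamma(1-p)\Gamma(n)/\Gamma(n-p)$ one gets
\[
\hat a_n=\Gamma(1+p)\,n^{-p}\bigl(1+O(1/n)\bigr),\qquad \check a_n=\Gamma(1-p)\,n^{p}\bigl(1+O(1/n)\bigr),
\]
and hence $\hat a_k^2=\Gamma(1+p)^2k^{-2p}(1+O(1/k))$, $\check a_k^2=\Gamma(1-p)^2k^{2p}(1+O(1/k))$.

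Next I would sum these from $1$ to $n$ using $\sum_{k=1}^n k^{\beta}=\frac{n^{\beta+1}}{\beta+1}+O(n^{\beta})$ for $\beta>-1$ and $\sum_{k=1}^n k^{-1}=\log n+O(1)$, which follow by comparison with the corresponding integrals. For $p\in(0,1/2)$ this gives $\sum_{k=1}^n k^{-2p}=\frac{n^{1-2p}}{1-2p}+O(1)$, and since the leftover $\sum_{k=1}^n O(k^{-2p-1})=O(1)$ is of the same relative order,
\[
\hat s_n^2=\frac{\Gamma(1+p)^2}{1-2p}\,n^{1-2p}\bigl(1+O(n^{2p-1})\bigr);
\]
for $p=1/2$ the same computation yields $\hat s_n^2=\Gamma(3/2)^2\log n\,\bigl(1+O(1/\log n)\bigr)$; and for $p\in(0,1)$ one gets $\check s_n^2=\frac{\Gamma(1-p)^2}{1+2p}\,n^{1+2p}\bigl(1+O(1/n)\bigr)$, the leftover $\sum_{k=1}^n O(k^{2p-1})=O(n^{2p})$ again being negligible relative to the main term.

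Finally I would form the quotients: the Gamma prefactors cancel, so $\hat a_n^2/\hat s_n^2=\frac{1-2p}{n}\bigl(1+O(n^{2p-1})\bigr)$ for $p\in(0,1/2)$ (the $O(1/n)$ from the numerator being dominated by $O(n^{2p-1})$), $\hat a_n^2/\hat s_n^2=\frac{1}{n\log n}\bigl(1+O(1/\log n)\bigr)$ for $p=1/2$, and $\check a_n^2/\check s_n^2=\frac{1+2p}{n}\bigl(1+O(1/n)\bigr)$; taking square roots via $(1+x)^{1/2}=1+O(x)$ gives exactly (\ref{hatas}) and (\ref{checkas}). The one point that needs care is the error exponent in the first case: it is the \emph{constant} left over from $\sum_{k\le n}k^{-2p}$ that, measured against the main term of size $n^{1-2p}$, produces the relative error $O(n^{2p-1})$ — not the Euler--Maclaurin correction $\tfrac12 n^{-2p}$, which contributes only the smaller order $O(n^{-1})$. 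Everything else is routine bookkeeping.
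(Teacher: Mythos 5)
Your proof is correct, and it reaches the stated error rates by a slightly different technical route than the paper. Both arguments start from Stirling's formula for the Gamma ratios, but where you control the error in $\hat{s}_n^2$ by summing the expansion $\hat{a}_k^2=\Gamma(1+p)^2k^{-2p}(1+O(1/k))$ directly and tracking the leftover constant from $\sum_{k\le n}k^{-2p}$ (your closing remark correctly identifies this constant, not the $n^{-2p}$ correction, as the source of the relative error $O(n^{2p-1})$), the paper instead compares $\hat{s}_n^2$ to $n\hat{a}_n^2$ (resp.\ $n\log n\,\hat a_n^2$, $n\check a_n^2$) via the exact telescoping identity coming from the recursion $\hat a_k=\tfrac{k-1}{k-1+p}\hat a_{k-1}$, e.g.\ $(1-2p)\hat s_n^2-n\hat a_n^2=-2p+\sum_{k=2}^n\frac{p^2(k-1)}{(k-1+p)^2}\hat a_{k-1}^2=O(1)$, and then bounds the difference $\hat a_n/\hat s_n-\sqrt{(1-2p)/n}$ as a single quotient; this sidesteps any appeal to the constant in the partial sums of $k^{-2p}$, while your version is more elementary bookkeeping with the same outcome. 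One small caveat: your blanket statement $\sum_{k=1}^n k^{\beta}=\frac{n^{\beta+1}}{\beta+1}+O(n^{\beta})$ is false for $\beta\in(-1,0)$ (the constant $\zeta(-\beta)$ term dominates $n^{\beta}$), but since in the application you only use the weaker and correct form $\sum_{k\le n}k^{-2p}=\frac{n^{1-2p}}{1-2p}+O(1)$, and you explicitly flag this point, the argument itself is sound.
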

\begin{proof} 
By the well-known approximation of the Gamma function:
\be
\Gamma(x+1)=\sqrt{2\pi x}(x/e)^x(1+O(x^{-1})),~~x\rightarrow \infty,
\label{Gammax}
\ee
we have
	\begin{equation}\label{asy of hat a_n}
		\hat{a}_n=\Gamma(n)\Gamma(1+p)/\Gamma(n+p)=\Gamma(1+p)n^{-p}(1+O(n^{-1})).
	\end{equation}
	This implies that
	\be
	\label{asy of hat s_n^2}
	\hat{s}_n^2=\sum_{k=1}^n \hat{a}_k^2\sim\left\{
	\begin{array}{ll}
		(\Gamma(1+p))^2(1-2p)^{-1}n^{1-2p},&\quad p \in [0, 1/2),\\
		(\Gamma(1+p))^2\log n,&\quad p=1/2.
	\end{array}
	\right.
	\ee

	If $p \in [0, 1/2)$,  then it follows from (\ref{asy of hat a_n}) that
	\bestar
	(1-2p)\hat{s}_n^2-n\hat{a}_n^2&=&\sum_{k=1}^{n}(1-2p)\hat{a}_k^2-n\hat{a}_n^2
	=-2p+\sum_{k=2}^{n}\left((1-2p-k)\hat{a}_k^2+(k-1)\hat{a}_{k-1}^2\right)\\
	&=& -2p+\sum_{k=2}^{n}\frac{p^2(k-1)}{(k-1+p)^2}\hat{a}_{k-1}^2=O(1).
	\eestar
	Combining this with (\ref{asy of hat a_n}) and (\ref{asy of hat s_n^2}) shows that 
	\begin{equation*}
		\frac{\hat{a}_n}{\hat{s}_n}-\sqrt{\frac{1-2p}{n}}=\frac{n\hat{a}_n^2-(1-2p)\hat{s}_n^2}{\hat{s}_n\sqrt{n}(\hat{a}_n\sqrt{n}+\sqrt{1-2p}\hat{s}_n)}
		=O(n^{2p-3/2}).
	\end{equation*}
	Hence, \eqref{hatas} holds for $p \in [0, 1/2)$.
	
For $p=1/2$,  we observe that
	\be
	\hat{s}_n^2-n\log n\,\hat{a}_n^2&=&
	1+\sum_{k=2}^{n} \left((1-k\log k)\hat{a}_k^2+(k-1)\log (k-1)\hat{a}_{k-1}^2\right)\nonumber\\
	&=&1+\sum_{k=2}^{n}\left(\frac{(1-k\log k)(k-1)^2}{\left(k-1/2\right)^2}+(k-1)\log (k-1)\right)\hat{a}_{k-1}^2.
	\ee
	By using Taylor's expansion of the function $x\log x$, we have
	$$(k-1)\log (k-1)-k\log k+\log k+1=-1/(2\xi_k) \sim -1/(2k), ~~~k \rightarrow \infty,$$
	where
	$\xi_k \in (k-1, k)$. Therefore, simple calculations show that
	\be  \label{middle hat as}
	&&\frac{(1-k\log k)(k-1)^2}{\left(k-1/2\right)^2}+(k-1)\log (k-1)\nonumber\\
	&=& (k-1)\log (k-1) -k\log k+\log k+1
	-  \Big(k\Big(
	\frac{(k-1)^2}{(k-1/2)^2}-1\Big)+1\Big)\log k+\Big(\frac{(k-1)^2}{(k-1/2)^2}-1\Big) \nonumber\\
	&=& O(\log k /k). 
	\ee
	If  $p=1/2$, 	it follows from  (\ref{asy of hat a_n})$-$(\ref{middle hat as}) that
	$
	\hat{s}_n^2-n\log n\,\hat{a}_n^2=O(1)
	$ and
	\begin{equation*}
		\frac{\hat{a}_n}{\hat{s}_n}-\sqrt{\frac{1}{n\log n}}=\frac{n\log n\,\hat{a}_n^2-\hat{s}_n^2}{\hat{s}_n\sqrt{n\log n}(\hat{s}_n+\hat{a}_n\sqrt{n\log n})}=
		O(n^{-1/2}(\log n)^{-3/2}).
	\end{equation*}	
	Hence (\ref{hatas}) still holds for $p=1/2$. 	

To proves (\ref{checkas}),
applying  (\ref{Gammax}) gives that
	\begin{equation}\label{asy of a_n}
		\check{a}_n=\Gamma(n)\Gamma(1-p)/\Gamma(n-p)=\Gamma(1-p)n^{p}(1+O(n^{-1})).
	\end{equation}
	This implies that
	\be\label{asy of V_n}
	\check{s}_n^2=\sum_{k=1}^n \check{a}_k^2\sim (\Gamma(1-p))^2(1+2p)^{-1}n^{1+2p}.
	\ee
	From (\ref{asy of a_n}) we get that
	\bestar
	(1+2p)\check{s}_n^2-n\check{a}_n^2&=&\sum_{k=1}^{n}(1+2p)\check{a}_k^2-n\check{a}_n^2
	=2p+\sum_{k=2}^{n}\left((1+2p-k)\check{a}_k^2+(k-1)\check{a}_{k-1}^2\right)\\
	&=& 2p+\sum_{k=2}^{n}\frac{p^2(k-1)}{(k-1-p)^2} \check{a}_{k-1}^2=O(n^{2p}).
	\eestar
	Combining this with (\ref{asy of a_n}) and (\ref{asy of V_n}) shows that 
	\begin{equation*}
		\frac{\check{a}_n}{\check{s}_n}-\sqrt{\frac{1+2p}{n}}=\frac{n\check{a}_n^2-(1+2p)\check{s}_n^2}{\check{s}_n\sqrt{n}(\check{a}_n\sqrt{n}+\sqrt{1+2p}\check{s}_n)}
		=O(n^{-3/2}).
	\end{equation*}
	This implies  (\ref{checkas}) and
	the proof of Lemma \ref{lemma ansn}  is complete. 
\end{proof}

\subsection{Proof of Theorems \ref{mainthm1}}

In  order to Theorem \ref{mainthm1}, we need the following strong invariance principles
under the moment condition $\mathbb{E}|X_1|^{2+\delta}<\infty$ for some $\delta \in (0,2)$. The proofs of Propositions \ref{mainthm212} and \ref{mainthm222}
are provided  in \ref{seca}.

	\begin{prop}\label{mainthm212}
	Suppose that $\mathbb{ E}|X_1|^{2+\delta}<\infty$
for some $\delta\in (0,2)$.   Define 
	\begin{eqnarray}
		\hat{\sigma}_n^2= \left\{
		\begin{array}{ll}
		\sigma^2 n/(1-2p), & p \in [0, 1/2),\\
	\sigma^2	n\log n, & p=1/2,
		\end{array}\right.  \label{evarhat2}
	\end{eqnarray}
and $\hat{b}_n=n^{-2p}\hat{\sigma}_n^2$ for $0 \le p\le 1/2$.
	Then  we can redefine $\{\hat{S}_n, 
n\ge 1\}$ on a richer probability space on which there exists a standard Brownian motion $\{W(t), t\ge 0\}$ such that
	\begin{equation}\label{spt}
		\left|\frac{\hat{S}_n-nm_1}{\hat{\sigma}_n}- \frac{W(\hat{b}_n)}{\sqrt{\hat{b}_n}} \right|=o(\hat{\delta}_n)\quad\mathrm{a.s.},
	\end{equation}
where
\be\label{Nas}
\hat{\delta}_n=\left
\{\begin{array}{ll}
	n^{-\delta/(4+2\delta)}\sqrt{\log n}, & p \in [0, 1/(2+\delta)),\\
	n^{p-1/2}\log n, & p \in [1/(2+\delta), 1/2),\\
	\log \log n/\sqrt{\log n}, & p=1/2.
\end{array}
\right.
\ee
\end{prop}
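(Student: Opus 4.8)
The plan is to reduce $\hat{S}_n-nm_1$ to the martingale $\hat{M}^*_n=\hat{a}_n(\hat{S}^*_n-\mathbb{E}(\hat{S}^*_n))$ introduced above, to embed $\hat{M}^*_n$ into a Brownian motion, and to transfer the resulting approximation back through the scaling constants of Lemma~\ref{lemma ansn}. Fix the truncation exponent $\alpha$ in \eqref{trZn} with $\alpha(2+\delta)>1$, so that $\sum_n\mathbb{P}(|X_n|>n^\alpha)<\infty$; in the final estimates $\alpha$ will be taken just above $1/(2+\delta)$ when $p\in[0,1/(2+\delta))$ and equal to $1/2$ when $p\in[1/(2+\delta),1/2]$. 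First I would dispose of the truncation. By Borel--Cantelli, $X_n=Z_n$ for all $n$ beyond a (random) index $N$, so $\hat{X}_k-\hat{Z}_k=X_{r(k)}-Z_{r(k)}$, where $r(k)\le k$ is the genealogical ancestor of site $k$ (follow $k\mapsto U_k\mapsto\cdots$ back to the first index where $\varepsilon=0$), and this difference vanishes unless $r(k)\le N$. Since for fixed $j$ the family $\{k\le n:r(k)=j\}$ has cardinality $O(n^p)$ a.s.\ (a P\'olya--urn estimate for Simon's recursion), $|\hat{S}_n-\hat{S}^*_n|=O(n^p)$ a.s., which after dividing by $\hat{\sigma}_n$ is $o(\hat{\delta}_n)$ in every regime of \eqref{Nas}; the centering bias $\mathbb{E}(\hat{S}^*_n)-nm_1$, being the propagated version of $\sum_k\mathbb{E}(X_1I(|X_1|>k^\alpha))$ and hence of order $n^p\sum_j j^{-p}\mathbb{E}(|X_1|I(|X_1|>j^\alpha))$, is $o(\hat{\sigma}_n\hat{\delta}_n)$ as soon as $\alpha>1/(2+\delta)$. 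Thus $(\hat{S}_n-nm_1)/\hat{\sigma}_n=\hat{M}^*_n/(\hat{a}_n\hat{\sigma}_n)+o(\hat{\delta}_n)$ a.s.

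Next I would identify the conditional variance of $\hat{M}^*_n$. With the differences $\hat{Y}_k$ of \eqref{hatYn}, put $A_n:=\sum_{k\le n}\mathbb{E}(\hat{Y}_k^2\mid\mathscr{F}_{k-1})=\sum_{k\le n}\hat{a}_k^2\,\mathrm{Var}(\hat{Z}_k\mid\mathscr{F}_{k-1})$, where
\[
\mathrm{Var}(\hat{Z}_k\mid\mathscr{F}_{k-1})=(1-p)\,\mathbb{E}(Z_k^2)+\frac{p}{k-1}\sum_{j<k}\hat{Z}_j^2-\Big((1-p)\mathbb{E}(Z_k)+\frac{p}{k-1}\hat{S}^*_{k-1}\Big)^2 .
\]
The laws of large numbers $\frac1n\sum_{j\le n}\hat{Z}_j\to m_1$ and $\frac1n\sum_{j\le n}\hat{Z}_j^2\to m_2$ for positively reinforced sums, with Marcinkiewicz-type rates (threshold exponent $\delta/(2+\delta)$) coming from $\mathbb{E}|X_1|^{2+\delta}<\infty$ and the $O(j^{-\alpha\delta})$ truncation bias, force $\mathrm{Var}(\hat{Z}_k\mid\mathscr{F}_{k-1})=\sigma^2+o(k^{-\delta/(2+\delta)})$ a.s.; summing and using Lemma~\ref{lemma ansn} and \eqref{asy of hat a_n}, $A_n=\Gamma(1+p)^2\hat{b}_n+R_n$ where $R_n$ is small enough that $|R_n|\log\hat{b}_n=o(\hat{b}_n\hat{\delta}_n^2)$ a.s. The same a.s.\ laws of large numbers also give $\mathbb{E}(|\hat{Y}_k|^{2+\delta}\mid\mathscr{F}_{k-1})\le C\hat{a}_k^{2+\delta}$ eventually a.s.

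Now I would embed and assemble. Applying the Skorokhod embedding to the martingale difference sequence $\{\hat{Y}_k/\Gamma(1+p)\}$, on a suitably enlarged probability space there is a standard Brownian motion $W$ with $\hat{M}^*_n/\Gamma(1+p)=W(\tau_n)$, $\tau_n=\sum_{k\le n}T_k$, $\mathbb{E}(T_k\mid\mathscr{F}_{k-1})=\Gamma(1+p)^{-2}\mathbb{E}(\hat{Y}_k^2\mid\mathscr{F}_{k-1})$ and $\mathbb{E}(T_k^{(2+\delta)/2}\mid\mathscr{F}_{k-1})\le C\,\mathbb{E}(|\hat{Y}_k|^{2+\delta}\mid\mathscr{F}_{k-1})\le C\hat{a}_k^{2+\delta}$ eventually a.s. A Chow-type martingale strong law applied to $\tau_n-\Gamma(1+p)^{-2}A_n=\sum_{k\le n}(T_k-\mathbb{E}(T_k\mid\mathscr{F}_{k-1}))$ yields a polynomial bound on $|\tau_n-\hat{b}_n|$, and the modulus-of-continuity estimate for Brownian increments (Theorem~1.2.1 in \cite{CR1981}) converts it into $|W(\tau_n)-W(\hat{b}_n)|=o(\sqrt{\hat{b}_n}\,\hat{\delta}_n)$. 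Finally, using $\hat{a}_n\hat{\sigma}_n=\Gamma(1+p)\sqrt{\hat{b}_n}\,(1+O(n^{-1}))$ (which follows from $\hat{b}_n=n^{-2p}\hat{\sigma}_n^2$ and \eqref{asy of hat a_n}) and the law of the iterated logarithm $|W(\tau_n)|=O(\sqrt{\hat{b}_n\log\log\hat{b}_n})$,
\[
\frac{\hat{S}_n-nm_1}{\hat{\sigma}_n}=\frac{\hat{M}^*_n}{\hat{a}_n\hat{\sigma}_n}+o(\hat{\delta}_n)=\frac{W(\tau_n)}{\sqrt{\hat{b}_n}}+o(\hat{\delta}_n)=\frac{W(\hat{b}_n)}{\sqrt{\hat{b}_n}}+o(\hat{\delta}_n)\quad\mathrm{a.s.},
\]
which is \eqref{spt}; the case $p=1/2$ is treated the same way after inserting $\hat{\sigma}_n^2=\sigma^2 n\log n$ and the second line of Lemma~\ref{lemma ansn}. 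Proposition~\ref{mainthm222} is handled analogously, the growth $\check{a}_n\asymp n^{p}$ making the analysis somewhat cleaner there.

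The hard part is the quantitative bookkeeping: pushing all three error sources --- de-truncation/genealogy, the fluctuation of $A_n$, and the Brownian time-change --- below $o(\hat{\delta}_n)$ simultaneously for $p\in[0,1/(2+\delta))$, for $p\in[1/(2+\delta),1/2)$, and for $p=1/2$. Having only $2+\delta$ moments is exactly what makes the threshold exponent $\delta/(2+\delta)$ appear, hence the rate $n^{-\delta/(4+2\delta)}\sqrt{\log n}$ for small $p$; for $p\in[1/(2+\delta),1/2)$ one is forced to use the coarse truncation level $k^{1/2}$, and the strong-approximation error at that level produces the extra logarithm in $n^{p-1/2}\log n$; the boundary case $p=1/2$, where $\hat{s}_n^2$ grows like $\log n$ rather than a power of $n$, requires a parallel treatment throughout. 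A secondary technical issue is that the conditional moment bounds on $\hat{Y}_k$ and $T_k$ must hold almost surely, not merely in mean, which is precisely why the reinforced-sum laws of large numbers are needed with explicit rates.
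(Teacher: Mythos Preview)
Your overall strategy matches the paper's: truncate, pass to the martingale $\hat{M}^*_n$, invoke a martingale strong approximation, and de-truncate via the genealogical representation with the $N_j(n)$ bound. The paper, however, does not build the approximation by hand from Skorokhod embedding; it applies Shao's Theorem~2.1 \cite{Shao1993} directly, after verifying two inputs: (i) $\sum_{k\le n}(\mathbb{E}(\hat{Y}_k^2\mid\mathscr{F}_{k-1})-\mathbb{E}(\hat{Y}_k^2))=o(\hat{c}_n^2)$ a.s.\ (their Lemma~A.4) and (ii) $\sum_k \hat{c}_k^{-4}\mathbb{E}(\hat{Y}_k^4)<\infty$, where $\hat{c}_n^2$ is chosen regime by regime. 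The paper also fixes a single truncation level $\alpha=1/(2+\delta)$ throughout rather than varying $\alpha$ with $p$.

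There is a small but genuine gap in your sketch. You control the Skorokhod times via $\mathbb{E}(T_k^{(2+\delta)/2}\mid\mathscr{F}_{k-1})\le C\hat{a}_k^{2+\delta}$ and then appeal to a Chow-type strong law. With $r=(2+\delta)/2$, Chow's criterion $\sum_k b_k^{-r}\hat{a}_k^{2+\delta}<\infty$ yields only $|\tau_n-\Gamma(1+p)^{-2}A_n|=o(n^{\beta})$ for every $\beta>2/(2+\delta)-2p$, which falls an $n^\varepsilon$ short of the needed $o(\hat{c}_n^2)=o(n^{2/(2+\delta)-2p})$; after the Brownian modulus of continuity this misses $\hat{\delta}_n$ by $n^{\varepsilon/2}$. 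The fix is exactly what the paper does: exploit that after truncation at level $k^{1/(2+\delta)}$ the differences have finite \emph{fourth} moments, bound $\mathbb{E}(\hat{Y}_k^4)\le 16\hat{a}_k^4\mathbb{E}(Z_k^4)$, and use the Fubini identity $\sum_k \hat{c}_k^{-4}\mathbb{E}(Z_k^4)\le C\,\mathbb{E}|X_1|^{2+\delta}$ to get the sharp summability. This is the step that generates the precise exponent $-\delta/(4+2\delta)$ and the extra $\sqrt{\log n}$ and $\log n$ factors in \eqref{Nas}; with only $(2+\delta)$-moment input on the Skorokhod times you will not reach them.
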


\begin{prop}\label{mainthm222}
	Suppose that $\mathbb{ E}|X_1|^{2+\delta}<\infty$
for some $\delta\in (0,2)$.   Define 
 $\check{\sigma}_n^2=\check{\sigma}^2n/(1+2p)$ and $\check{b}_n= n^{2p }\check{\sigma}_n^2$ for $0 \le p< 1$.
	Then  we can redefine $\{\check{S}_n, 
n\ge 1\}$ on a richer probability space on which there exists a standard Brownian motion $\{W(t), t\ge 0\}$ such that
	\bestar
		\left|\frac{\check{S}_n-n\check{\mu}}{\check{\sigma}_n}- \frac{W(\check{b}_n)}{\sqrt{\check{b}_n}} \right|=o(\check{\delta}_n)\quad\mathrm{a.s.},
	\eestar
where
\bestar
\check{\delta}_n=\left
\{\begin{array}{ll}
	n^{-\delta/(4+2\delta)}\sqrt{\log n}, & p \in [0, 2/(2+\delta)),\\
	n^{(p-1)/2}\log n, & p \in [ 2/(2+\delta), 1).
\end{array}
\right.
\eestar
\end{prop}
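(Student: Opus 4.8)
The plan is to prove the strong approximation first for the martingale $\{\check{M}^*_n,\mathscr{F}_n\}$ attached to the truncated walk and then to transfer it to $\check{S}_n$. I fix the truncation exponent $\alpha$ large, say $\alpha>\max\{1/(2+\delta),\,(1-p)/\delta,\,1/(2\delta)\}$ (a precise threshold set is not important); then $\sum_k\mathbb{P}(|X_1|>k^{\alpha})<\infty$, so by Borel--Cantelli a.s.\ $X_k=Z_k$ for all but finitely many $k$, and moreover all variances and fourth moments of the truncated variables $Z_k$ are finite, which makes quantitative laws of large numbers available. Recall $\check{S}^*_n=\check{Z}_1+\cdots+\check{Z}_n$, $\check{M}^*_n=\check{a}_n(\check{S}^*_n-\mathbb{E}\check{S}^*_n)=\sum_{k\le n}\check{Y}_k$, and $\check{S}^*_n-\mathbb{E}\check{S}^*_n=\check{a}_n^{-1}\check{M}^*_n$.

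First I would record, for the martingale $\{\check{M}^*_n\}$, that each $\check{Z}_k$ is (conditionally on $\{\varepsilon_j,U_j\}$) a signed copy of some $Z_j$ with $j\le k$, so $|\check{Y}_k|\le2\check{a}_kk^{\alpha}$ and $\mathbb{E}|\check{Y}_k|^{2+\delta}\le C\check{a}_k^{2+\delta}$. Expanding $\mathbb{E}(\check{Z}_k\mid\mathscr{F}_{k-1})=(1-p)\mathbb{E}Z_k-\tfrac{p}{k-1}\check{S}^*_{k-1}$ and $\mathbb{E}(\check{Z}_k^2\mid\mathscr{F}_{k-1})=(1-p)\mathbb{E}Z_k^2+\tfrac{p}{k-1}\sum_{j<k}\check{Z}_j^2$, and invoking the strong laws $\check{S}^*_n/n\to\check{\mu}$, $n^{-1}\sum_{j\le n}\check{Z}_j^2\to m_2$ together with the identity $(1-p)m_1-p\check{\mu}=\check{\mu}$, one gets $\mathbb{E}(\check{Y}_k^2\mid\mathscr{F}_{k-1})=\check{a}_k^2(\check{\sigma}^2+\varepsilon_k)$ with a rate $\varepsilon_k=o(k^{-\beta})$ supplied by the martingale law of the iterated logarithm for $\{\check{S}^*_n\}$ and $\{\sum_{j\le n}\check{Z}_j^2\}$. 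Summing, and using Lemma~\ref{lemma ansn} in the form $\check{\sigma}^2\check{s}_n^2=\check{a}_n^2\check{\sigma}_n^2(1+O(n^{-1}))=\Gamma(1-p)^2\check{b}_n(1+o(1))$, yields $V^*_n:=\sum_{k\le n}\mathbb{E}(\check{Y}_k^2\mid\mathscr{F}_{k-1})=\Gamma(1-p)^2\check{b}_n+O(n^{-\beta}\check{b}_n)$ a.s.

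Next I would apply a strong approximation theorem for martingale differences with finite $(2+\delta)$-th moments to construct, on an enlarged probability space, a standard Brownian motion $B$ with $|\check{M}^*_n-B(V^*_n)|=o(r_n)$ a.s., where $r_n\asymp\bigl(\sum_{k\le n}\check{a}_k^{2+\delta}\bigr)^{1/(2+\delta)}\asymp\check{s}_n\,n^{-\delta/(4+2\delta)}$ up to a power of $\log n$; dividing by $\check{a}_n\check{\sigma}_n\sim\check{\sigma}\check{s}_n$ turns this into $(\check{S}^*_n-\mathbb{E}\check{S}^*_n)/\check{\sigma}_n=B(V^*_n)/(\check{a}_n\check{\sigma}_n)+o(n^{-\delta/(4+2\delta)}\sqrt{\log n})$ a.s. Using the Step-1 bound on $|V^*_n-\Gamma(1-p)^2\check{b}_n|$ and the behaviour of the increments of Brownian motion (cf.\ Theorem~1.2.1 of \cite{CR1981}) one replaces $B(V^*_n)$ by $B(\Gamma(1-p)^2\check{b}_n)$; setting $W(t):=\Gamma(1-p)^{-1}B(\Gamma(1-p)^2t)$, again a standard Brownian motion, and using $\check{a}_n\check{\sigma}_n=\Gamma(1-p)\sqrt{\check{b}_n}(1+o(1))$, a further application of the increment modulus gives $(\check{S}^*_n-\mathbb{E}\check{S}^*_n)/\check{\sigma}_n=W(\check{b}_n)/\sqrt{\check{b}_n}+o(\check{\delta}_n)$; and $\mathbb{E}\check{S}^*_n-n\check{\mu}=o(\check{\sigma}_n\check{\delta}_n)$ follows from the recursion for $\mathbb{E}\check{Z}_k$. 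Finally, $\{\check{X}_k-\check{Z}_k\}$ obeys the negatively reinforced recursion with base variables $W_k:=X_kI(|X_k|>k^{\alpha})$, and since a.s.\ only finitely many $W_k$ are nonzero, an analysis of the martingale $\check{a}_n(\check{S}_n-\check{S}^*_n-\mathbb{E}(\check{S}_n-\check{S}^*_n))$, whose predictable quadratic variation is of order $n^{3p}$, via the martingale law of the iterated logarithm yields $\check{S}_n-\check{S}^*_n=O(n^{p/2}\sqrt{\log\log n})=o(\sqrt{n}\,\check{\delta}_n)$ a.s., and likewise for its mean. Combining everything gives $(\check{S}_n-n\check{\mu})/\check{\sigma}_n-W(\check{b}_n)/\sqrt{\check{b}_n}=o(\check{\delta}_n)$ a.s.

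The hard part is the rate bookkeeping and the verification that the two error sources are exactly balanced at $p=2/(2+\delta)$: the strong approximation error contributes a term of order $n^{-\delta/(4+2\delta)}\sqrt{\log n}$, while the negatively reinforced fluctuations of the discarded part contribute a term of order $n^{(p-1)/2}\sqrt{\log\log n}$, and since $(p-1)/2<-\delta/(4+2\delta)$ precisely when $p<2/(2+\delta)$, the first dominates below the threshold and the second above it, which is exactly the stated $\check{\delta}_n$. One must also ensure, for $p$ near $1$, that the rate $\beta$ in $\varepsilon_k=o(k^{-\beta})$ — which deteriorates because the positively reinforced walk $\{\sum_{j\le n}\check{Z}_j^2\}$ has fluctuations of order $n^{p}$ once $p>1/2$ — is still large enough that the modulus-of-continuity step produces an $o(\check{\delta}_n)$ error; this is precisely where a sufficiently large truncation exponent $\alpha$ is needed.
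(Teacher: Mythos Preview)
Your overall strategy---truncate, establish a strong invariance principle for the martingale $\check{M}^*_n$, then control $\check{S}_n-\check{S}^*_n$---is exactly the paper's (the paper states that the proof mirrors that of Proposition~\ref{mainthm212}), but your technical choices diverge at two points and one of them is a genuine gap.

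First, the paper takes the \emph{specific} truncation level $\alpha=1/(2+\delta)$, not a large $\alpha$. This is calibrated so that a Fubini argument gives $\sum_k \check{c}_k^{-4}\mathbb{E}(\check{Y}_k^4)<\infty$ with $\check{c}_n^2=\check{a}_n^2 n^{2/(2+\delta)}$, which together with Lemma~\ref{lemmaa4} (namely $\mathbb{E}(\check{Y}_n^2\mid\mathscr{F}_{n-1})-\mathbb{E}(\check{Y}_n^2)=o(\check{a}_n^2 n^{-\delta/(2+\delta)})$) feeds directly into Theorem~2.1 of Shao~\cite{Shao1993} to produce $\check{M}^*_n-B(\check{\sigma}_{1n}^2)=o(\check{c}_n\sqrt{\log n})$. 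Your appeal to ``a strong approximation theorem for martingale differences with finite $(2+\delta)$-th moments'' yielding rate $(\sum_k\check{a}_k^{2+\delta})^{1/(2+\delta)}$ is not backed by a specific result, and this is the heart of the argument; Shao's theorem is the concrete tool the paper uses, and it requires the fourth-moment bound that your large-$\alpha$ choice would destroy. Your claim that large $\alpha$ is ``precisely where'' needed for the modulus-of-continuity step is also off: the rate $\beta$ governing $\varepsilon_k$ is dictated by the superdiffusive fluctuations of the positively reinforced square-sum $\sum_{j\le n}\check{Z}_j^2$, which are $\Theta(n^p)$ for $p>1/2$ regardless of $\alpha$.

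Second, the paper handles $\check{S}_n-\check{S}^*_n$ directly via the representation $\sum_j\Delta_j(n)(X_j-Z_j)$ and the bound $\Delta_j(n)=o(n^{p/2}\log n)$ of Lemma~\ref{Njn}, using that only finitely many $X_j-Z_j$ are nonzero. Your martingale-LIL route reaches essentially the same conclusion but still relies implicitly on a bound like $N_j(n)=o(n^p\log\log n)$ to control $\sum_{j<k}D_j^2$, so it is not really more elementary. Your identification of the crossover at $p=2/(2+\delta)$ is correct: the invariance-principle error $n^{-\delta/(4+2\delta)}\sqrt{\log n}$ dominates below, and the $n^{(p-1)/2}\log n$ term from the truncation remainder dominates above.
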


\begin{proof}
	[Proof of Theorem \ref{mainthm1}]
We only proves (\ref{ASC1}) since the proofs of \eqref{ASC2} and \eqref{ASC3} are similar and we omit the details. 
Assume that  $p\in [0,1/2)$. 
It follows from Proposition \ref{mainthm212} that we can redefine $\{\hat{S}_n, 
n\ge 1\}$ on a richer probability space on which there exists a standard Brownian motion $\{W(t), t\ge 0\}$ such that
\bestar
|n^{-p}(\hat{S}_n-nm_1)- W(\hat{b}_n)|=o((\hat{b}_n)^{1/2})\quad\mathrm{a.s.},
\eestar
where $\hat{b}_n=\sigma^2 n^{1-2p}/(1-2p)$.
	Applying Theorem \ref{mainthm2}  gives that 
for any $f\in \mathcal{C}_e(\mathbb{R})$,
	\bestar
		\frac{1}{\log \hat{b}_n}\sum_{k=1}^{n}\frac{\hat{b}_k-\hat{b}_{k-1}}{\hat{b}_k}f\left(\frac{\sqrt{1-2p}}{\sigma\sqrt{k}}(\hat{S}_k-km_1)\right)\rightarrow\mathbb{ E}f(\mathbf{Z})\quad\mathrm{a.s.}
	\eestar
By noting that  
$\log \hat{b}_n \sim (1-2p)\log n$ and
\bestar
\frac{\hat{b}_n-\hat{b}_{n-1}}{\hat{b}_n}=1-(1-1/n)^{1-2p}\sim (1-2p)/n,
\eestar
and using similar arguments as in the proof of
(\ref{TnfW})  gives that for any $f\in \mathcal{C}_e(\mathbb{R})$,
\begin{equation*}
		\frac{1}{\log n}\sum_{k=1}^{n}\frac{1}{k}f\left(\frac{\sqrt{1-2p}}{\sigma\sqrt{k}}(\hat{S}_k-km_1)\right)\rightarrow \mathbb{ E}f(\mathbf{Z})\quad\mathrm{a.s.}
\end{equation*}
This proves (\ref{ASC1}). 
\end{proof}

\subsection{Proof of Theorem \ref{mainthm3}}

The proof of Theorem \ref{mainthm3} is based on a version of ASCLT for martingales in  \cite{Ch1996},
which is stated as follows.
\begin{lemma}\label{ASCLTfm}
Let $(Y_n,\mathscr{F}_n)_{n\ge 0}$ be a martingale  difference sequence with finite second moments and write $M_n=\sum_{k=1}^n Y_k$.
Suppose that there exists an increasing sequence  $0\le b_0<b_1<\cdots$  such that $b_n\rightarrow \infty$ and the
following conditions hold:
	\begin{itemize}
		\item [(i).] $b_n^{-1}\sum_{k=1}^n\mathbb{E}(Y_k^2|\mathscr{F}_{k-1}) \rightarrow 1~\mathrm{a.s.};$ 
		\item [(ii).] $\sum\nolimits_{n\geq1}b_{n}^{-1}\mathbb{E}(Y_n^2I\{|Y_n|>\varepsilon \sqrt{b_{n}}\}|\mathscr{F}_{n-1})<\infty~\mathrm{a.s.}$ for all $\varepsilon>0$;
		\item [(iii).]  $\sum\nolimits_{n\geq1}b_{n}^{-\beta }\mathbb{E}(|Y_n|^{2\beta}I\{|Y_n|\leq \sqrt{b_{n}}\}|\mathscr{F}_{n-1})<\infty~\mathrm{a.s.}$ for some $\beta>0$.
	\end{itemize}
Then  for any  $f \in \mathcal{C}_b( \mathbb{R})$,
	\begin{equation*}
		\frac1{\log b_n}\sum_{k=1}^n\frac{b_k-b_{k-1}}{b_k}f\Big(\frac{M_k}{\sqrt{b_k}}\Big)\rightarrow \mathbb{E}(f(\mathbf{Z}))\quad\mathrm{a.s.}\label{origin}
	\end{equation*}
\end{lemma}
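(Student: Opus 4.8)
The plan is to follow the classical logarithmic second-moment scheme for almost sure central limit theorems, in the martingale form underlying \cite{Ch1996}. By the Portmanteau reduction recorded around \eqref{ASCLTeq}, it suffices to prove the stated convergence for an arbitrary bounded Lipschitz $f$. Fix such an $f$ and write $d_k=(b_k-b_{k-1})/b_k$, so $0<d_k\le1$. A preliminary observation is that (i) and (ii) force $b_{n-1}/b_n\to1$: (i) gives $\mathbb{E}(Y_n^2\mid\mathscr{F}_{n-1})=(b_n-b_{n-1})+o(b_n)$, while (ii), being the tail of a convergent series, gives $b_n^{-1}\mathbb{E}(Y_n^2\mid\mathscr{F}_{n-1})\to0$, whence $(b_n-b_{n-1})/b_n\to0$; consequently $d_n\to0$ and $\sum_{k=1}^n d_k\sim\log b_n$, so the normalization in the conclusion is the natural one. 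Now put $g_k=f(M_k/\sqrt{b_k})-\mathbb{E}f(M_k/\sqrt{b_k})$, which is $\mathscr{F}_k$-measurable. The argument splits into the ordinary central limit theorem $\mathbb{E}f(M_k/\sqrt{b_k})\to\mathbb{E}f(\mathbf{Z})$ and the almost sure statement $\frac1{\log b_n}\sum_{k=1}^n d_k g_k\to0$; Toeplitz's lemma combines the two. The first part is the Lindeberg--Feller martingale central limit theorem: (i) is the conditional-variance hypothesis, and since $b_n\uparrow\infty$ we have $I\{|Y_k|>\varepsilon\sqrt{b_n}\}\le I\{|Y_k|>\varepsilon\sqrt{b_k}\}$ for $k\le n$, so Kronecker's lemma applied to (ii) yields $b_n^{-1}\sum_{k\le n}\mathbb{E}(Y_k^2 I\{|Y_k|>\varepsilon\sqrt{b_n}\}\mid\mathscr{F}_{k-1})\to0$ a.s., which is the conditional Lindeberg condition; hence $M_n/\sqrt{b_n}$ converges in distribution to $\mathbf{Z}$.

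The substance is the almost sure statement, for which I would estimate $\mathbb{E}\bigl(\sum_{k=1}^n d_k g_k\bigr)^2=\sum_{k=1}^n d_k^2\,\mathbb{E}g_k^2+2\sum_{1\le k<l\le n}d_k d_l\,\mathbb{E}\bigl(g_k\,\mathbb{E}(g_l\mid\mathscr{F}_k)\bigr)$ via a correlation bound $|\mathbb{E}(g_l\mid\mathscr{F}_k)|\le C\,\varphi(b_k/b_l)+C\epsilon_l$ for $k<l$, where $\varphi(u)\to0$ as $u\to0$ (a power of $u$ suffices) and $\epsilon_l\to0$, in fact at a rate polynomial in $\log b_l$ supplied by (iii). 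To derive it, condition on $\mathscr{F}_k$ and write $M_l/\sqrt{b_l}=M_k/\sqrt{b_l}+(M_l-M_k)/\sqrt{b_l}$; the first summand is small in $L^1$ (the conditional expectation of its square is $b_k/b_l$ up to the fluctuation allowed by (i)), while the second is a normalized martingale whose conditional variance, by (i), equals $1-b_k/b_l$ up to a small error. A quantitative conditional martingale central limit theorem --- a Berry--Esseen type estimate whose remainder is controlled by the truncated conditional second moments of (ii) and the conditional $2\beta$-th moments of (iii) --- then places $\mathbb{E}(f(M_l/\sqrt{b_l})\mid\mathscr{F}_k)$ within $C\varphi(b_k/b_l)+C\epsilon_l$ of $\mathbb{E}f(\mathbf{Z})$, which together with the already established $\mathbb{E}f(M_l/\sqrt{b_l})\to\mathbb{E}f(\mathbf{Z})$ gives the correlation bound. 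Inserting it and using $\sum_k d_k^2\le\sum_k d_k\sim\log b_n$, $\sum_{1\le k<l\le n}d_kd_l\,\varphi(b_k/b_l)=O(\log b_n)$ (treating $\sum_k d_k\,\delta_{\log b_k}$ as essentially Lebesgue measure on $[0,\log b_n]$ and $\varphi(b_k/b_l)$ as $e^{-c(\log b_l-\log b_k)}$), and $\sum_{1\le k<l\le n}d_kd_l\,\epsilon_l=o\bigl((\log b_n)^2\bigr)$ with the rate from (iii), yields $\mathbb{E}\bigl(\frac1{\log b_n}\sum_{k\le n}d_k g_k\bigr)^2\to0$ at a rate summable along geometrically spaced subsequences.

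To pass from the second-moment bound to almost sure convergence, fix $\theta>1$ and choose $n_j$ with $\log b_{n_j}\sim\theta^{\,j}$, which is possible since $\log b_n\uparrow\infty$ with increments tending to $0$. Along $\{n_j\}$ the bound is summable in $j$, so Borel--Cantelli gives $\frac1{\log b_{n_j}}\sum_{k\le n_j}d_k g_k\to0$ a.s. For $n_j\le n<n_{j+1}$ one has $\|g_k\|_\infty\le2\|f\|_\infty$ and $\sum_{n_j<k\le n_{j+1}}d_k\sim\log b_{n_{j+1}}-\log b_{n_j}$, so the oscillation of the partial averages over a single block is at most of order $\theta-1$; letting $\theta\downarrow1$ yields $\frac1{\log b_n}\sum_{k\le n}d_k g_k\to0$ a.s. along the full sequence, which with the central limit part and Toeplitz's lemma completes the proof.

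The step I expect to be the main obstacle is the quantitative conditional central limit theorem behind the correlation bound: the natural normalizer of the tail martingale $M_l-M_k$ is the \emph{random} quantity $\sum_{j=k+1}^l\mathbb{E}(Y_j^2\mid\mathscr{F}_{j-1})$, so one must use (i) to replace it by $b_l-b_k$ with a controlled error, and simultaneously convert (ii) and (iii), through Kronecker-type summation, into genuine rates for the Lindeberg and Lyapunov remainders entering a martingale Berry--Esseen estimate --- all while keeping these errors uniform enough in the pair $(k,l)$ for the double sum above to behave as claimed. This uniform, quantitative conditional limit theorem is exactly what conditions (i)--(iii) are tailored to provide.
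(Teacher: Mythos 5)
The paper itself does not prove this lemma: it is imported from Cha\^{a}bane \cite{Ch1996}, where conditions (i)--(iii) are used to build an almost sure invariance principle for the martingale via Skorokhod embedding (this is precisely what the truncated conditional moment conditions (ii)--(iii) are designed for), and the log-averaged statement is then inherited from the Brownian motion, in the same spirit as Theorem \ref{mainthm2} of this paper. Your proposal instead follows the logarithmic second-moment (correlation) scheme, and its decisive step --- the bound $|\mathbb{E}(g_l\mid\mathscr{F}_k)|\le C\varphi(b_k/b_l)+C\epsilon_l$, or equivalently a quantitative bound on $\mathbb{E}(g_kg_l)$ --- is exactly the step you do not prove. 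In the i.i.d.\ setting this bound is immediate because $f\bigl((M_l-M_k)/\sqrt{b_l}\bigr)$ is independent of $\mathscr{F}_k$; here it is not, and the conditional law of $M_l-M_k$ given $\mathscr{F}_k$ is random. Making your ``conditional Berry--Esseen'' step rigorous under (i)--(iii) alone is the entire content of the theorem, not a technical afterthought, so as written the proof has a genuine gap at its core.

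There is also a structural obstruction to the way you intend to close that gap. Conditions (i)--(iii) are purely almost sure statements about \emph{random} conditional moments: they carry no deterministic rate and no uniform integrability. Your scheme, however, needs unconditional estimates: to apply Chebyshev and Borel--Cantelli along $\log b_{n_j}\sim\theta^j$ you must bound $\mathbb{E}\bigl(\sum_{k\le n}d_kg_k\bigr)^2$ by a deterministic quantity of order $(\log b_n)^{2-\delta}$ for some $\delta>0$; an a.s.\ error $\epsilon_l\to0$ ``at a rate polynomial in $\log b_l$ supplied by (iii)'' is not supplied by (iii), and an almost sure bound on $\mathbb{E}(g_l\mid\mathscr{F}_k)$ cannot simply be integrated to bound $\mathbb{E}(g_kg_l)$ without additional truncation or stopping-time arguments (which is essentially where the Skorokhod-embedding proof does its work). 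The peripheral parts of your argument are fine --- the reduction to $\mathcal{C}_L(\mathbb{R})$ via \eqref{ASCLTeq}, the deduction $d_n\to0$ (note you also need $\mathbb{E}(Y_n^2I\{|Y_n|\le\varepsilon\sqrt{b_n}\}\mid\mathscr{F}_{n-1})\le\varepsilon^2b_n$ to conclude from (ii)), the conditional Lindeberg condition via Kronecker's lemma, and the blocking argument at the end --- but without a complete proof of the correlation inequality the lemma is not established by this route; citing \cite{Ch1996}, or re-deriving the strong approximation it provides and invoking Theorem \ref{mainthm2}, is the way the paper settles it.
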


\vskip 0.2cm

\begin{proof}[Proof of Theorem \ref{mainthm3}]
We only prove (\ref{ASC1})  for $f \in \mathcal{C}_b( \mathbb{R})$ since the  proofs of the others  are similar and we omit the details. By the equivalence of  (\ref{ASCLTf}) and (\ref{ASCLTeq}), it is sufficient to show that for any $p \in [0,1/2)$ and any $f\in \mathcal{C}_L( \mathbb{R})$,
\begin{equation}\label{ACS}
	\frac1{\log n}\sum_{k=1}^n \frac{1}{k} f\left(\frac{\sqrt{1-2p}}{\sigma\sqrt{k}}(\hat{S}_k-km_1)\right)\rightarrow \mathbb{E}(f(\mathbf{Z}))\quad\mathrm{a.s.}
\end{equation}
Take $\alpha=1/2$ in (\ref{trZn}).
Note that $\mbox{Var}(\hat{S}^*_n)\sim \sigma^2n/(1-2p)$ for $p<1/2$ by Lemma 4.6 in \cite{HZZY}.
Applying (\ref{asy of hat a_n}) and (\ref{asy of hat s_n^2}) yields that
\begin{equation*}
	\frac{\text{Var}(\hat{M}_n^*)}{\sigma^2\hat{s}_n^2}=\frac{\hat{a}_n^2\text{Var}(\hat{S}^*_n)}{\sigma^2\hat{s}_n^2} \rightarrow 1.
\end{equation*}
	From 	Lemma \ref{lemmaa4} below, we have 
	$
\mathbb{E}(\hat{Y}_n^2|\mathscr{F}_{n-1})
-\mathbb{E}(\hat{Y}_n^2)=o(1)\hat{a}_n^2, 
$ and hence
\be
		\frac{1}{\sigma^2\hat{s}_n^2}\sum_{k=1}^n\mathbb{E}(\hat{Y}_k^2|\mathscr{F}_{k-1})=\frac{\text{Var}(\hat{M}_n^*)}{\sigma^2\hat{s}_n^2}+\frac{\sum_{k=1}^n(\mathbb{E}(\hat{Y}_k^2|\mathscr{F}_{k-1})
-\mathbb{E}(\hat{Y}_k^2))}{\sigma^2\sum_{k=1}^n\hat{a}_k^2}\rightarrow1\quad\mathrm{a.s.}\label{con1}
	\ee
Similar arguments  as in the proof Lemma 4.1 in \cite{HZZY} show
that for any $m, n\in \mathbb{N}$,
\bestar
\mathbb{E}(\hat{Z}_n^{2m})=\mathbb{E}(\check{Z}_n^{2m})\le\mathbb{E}({Z}_n^{2m}).
\label{Zmoment}
\eestar
According to (\ref{hatYn}),  we have
\be\label{yn4}
\mathbb{E}(\hat{Y}_{n}^4)\le 8\hat{a}_n^4 \Big(\mathbb{E}(\hat{Z}_{n}^4)+\mathbb{E}\big( \mathbb{E}^4\big( \hat{Z}_{n} \big| \mathscr{F}_{n-1} \big)\big)\Big)\le 16\hat{a}_n^4 \mathbb{E}(\hat{Z}_{n}^4)\le 16\hat{a}_n^4 \mathbb{E}({Z}_{n}^4). 
\ee
Applying (\ref{hatas})  yields 
\begin{eqnarray*}
\sum_{n=1}^\infty\hat{s}_{n}^{-4}\mathbb{E}(\hat{Y}_n^4)&\le&	C\sum_{n=1}^{\infty}\frac{\mathbb{E}(Z_n^4)}{n^2}	= C\mathbb{E}\Big(\sum_{n=1}^{\infty}\frac{X_1^4I(|X_1| \le\sqrt{n})}{n^2}\Big)\nonumber\\
	&=&C\mathbb{E}\Big(X_1^4\sum_{n\ge X_1^2}n^{-2}\Big)
	\le C\mathbb{E}(X_1^2)<\infty.
\end{eqnarray*}
Hence  for all $\varepsilon>0$,  we have
	\begin{equation}\label{con11}
		\sum_{n=1}^\infty \hat{s}_{n}^{-2}\mathbb{E}(\hat{Y}_n^2I\{|\hat{Y}_n|>\varepsilon \hat{s}_{n}\}|\mathscr{F}_{n-1}) \leq \varepsilon^{-2}\sum_{n=1}^\infty \hat{s}_{n}^{-4}\mathbb{E}(\hat{Y}_n^4|\mathscr{F}_{n-1})<\infty \quad\mathrm{a.s.},
	\end{equation}
	and 
	\begin{equation}\label{con12}
		\sum_{n=1}^\infty \hat{s}_{n}^{-4}\mathbb{E}(\hat{Y}_n^4I\{|\hat{Y}_n|\leq \varepsilon\hat{s}_n\}|\mathscr{F}_{n-1}) \le \sum_{n=1}^\infty \hat{s}_n^{-4}\mathbb{E}(\hat{Y}_n^4|\mathscr{F}_{n-1})<\infty \quad\mathrm{a.s.}
	\end{equation}
	Applying  Lemma \ref{ASCLTfm} with $b_n=\sigma^2\hat{s}_n^2$
and using (\ref{con1}), (\ref{con11}) and (\ref{con12}) gives that for any $f\in \mathcal{C}_b(\mathbb{R})$,
	\begin{equation}
		\frac{1}{\log \hat{s}_n^2}\sum_{k=1}^n\frac{\hat{a}_k^2}{\hat{s}_k^2} f\Big(\frac{\hat{M}_k^*}{\sigma\hat{s}_k}\Big)\rightarrow \mathbb{E}(f(\mathbf{Z}))\quad\mathrm{a.s.} \label{original state}
	\end{equation}
 Corollary 1.1 in  \cite{HZZY} gives that  
\begin{equation*}
	\limsup_{n\rightarrow\infty}\frac{|\hat{S}_n-nm_1|}{\sqrt{2n\log\log n}}=\frac{\sigma}{\sqrt{1-2p}}\quad\mathrm{a.s.}
\end{equation*}	
Combining this with \eqref{hatas}, (\ref{Esn})  and (\ref{diffstars})  yields 
	that
	\begin{eqnarray}
	&&	\Big|\frac{\hat{M}_n^*}{\hat{s}_n}-\frac{\sqrt{1-2p}(\hat{S}_n-nm_1)}{\sqrt{n}}\Big|
	=\Big|\frac{\hat{a}_n(\hat{S}^*_n-\mathbb{E}(\hat{S}^*_n))}{\hat{s}_n}-
		\frac{\sqrt{1-2p}(\hat{S}_n-nm_1)}{\sqrt{n}}\Big|\nonumber\\
		&&~~~~\le \frac{\hat{a}_n\sqrt{n}}{\hat{s}_n}\Big|\frac{\hat{S}^*_n-\hat{S}_n}{\sqrt{n}}-\frac{\mathbb{E}(\hat{S}^*_n)-nm_1}{\sqrt{n}}\Big|+
		\Big|\frac{\hat{a}_n(\hat{S}_n-nm_1)}{\hat{s}_n}-
		\frac{\sqrt{1-2p}(\hat{S}_n-nm_1)}{\sqrt{n}}\Big|\nonumber\\
		&&~~~~=\frac{\hat{a}_n\sqrt{n}}{\hat{s}_n}\Big|\frac{\hat{S}^*_n-\hat{S}_n}{\sqrt{n}}-\frac{\mathbb{E}(\hat{S}^*_n)-nm_1}{\sqrt{n}}\Big|+O(n^{2p-3/2})|\hat{S}_n-nm_1|\rightarrow 0\quad\mathrm{a.s.}\nonumber \label{ms1}
	\end{eqnarray}

As follows, we assume that $f\in \mathcal{C}_L(\mathbb{R})$. Then there exists $L>0$
such that $|f(x)|\le L$ and $|f(x)-f(y)|\le L|x-y|$ for all $x, y\in \mathbb{R}$.
By (\ref{hatas}) and (\ref{asy of hat s_n^2}), we have
\bestar
&& \frac{1}{\log \hat{s}_n^2}\Big|\sum_{k=1}^n \frac{1-2p}{k} f\Big(\frac{\sqrt{1-2p}}{\sigma\sqrt{k}}(\hat{S}_k-km_1)\Big)-\sum_{k=1}^n\frac{\hat{a}_k^2}{\hat{s}_k^2} f\Big(\frac{\hat{M}_k^*}{\sigma\hat{s}_k}\Big)\Big|\\
&\le & \frac{1}{\log \hat{s}_n^2}\sum_{k=1}^n  \frac{1-2p}{k} \Big|f\Big(\frac{\sqrt{1-2p}}{\sigma\sqrt{k}}(\hat{S}_k-km_1)\Big)-f\Big(\frac{\hat{M}_k^*}{\sigma\hat{s}_k}\Big)\Big|+\frac{1}{\log \hat{s}_n^2} \sum_{k=1}^n \Big|\frac{1-2p}{k}- \frac{\hat{a}_k^2}{\hat{s}_k^2}\Big| \Big|f\Big(\frac{\hat{M}_k^*}{\sigma\hat{s}_k}\Big)\Big|\\
&\le&  \frac{L}{\log \hat{s}_n^2} \sum_{k=1}^n \frac{1}{k} \Big|\frac{\sqrt{1-2p}}{\sigma\sqrt{k}}(\hat{S}_k-km_1)-\frac{\hat{M}_k^*}{\sigma\hat{s}_k}\Big|+ \frac{L}{\log \hat{s}_n^2}\sum_{k=1}^n \frac{1}{k} \Big |1- \frac{k\hat{a}_k^2}{(1-2p)\hat{s}_k^2 }\Big| 
\rightarrow 0\quad\mathrm{a.s.}
\eestar
	Combining this with (\ref{original state})   implies 
that for any $f\in \mathcal{C}_L(\mathbb{R})$,
\bestar
\frac{1}{\log \hat{s}_n^2}\sum_{k=1}^n \frac{1-2p}{k} f\Big(\frac{\sqrt{1-2p}}{\sigma\sqrt{k}}(\hat{S}_k-km_1)\Big)\rightarrow \mathbb{E}(f(\mathbf{Z}))\quad\mathrm{a.s.}
\eestar
This, together with (\ref{asy of hat s_n^2}), yields
(\ref{ACS}) 
 and completes the proof of Theorem \ref{mainthm3}.
\end{proof}

\subsection{Proof of Theorem \ref{limsup}} 
\begin{proof}[Proof of Theorem \ref{limsup}]
Let $M_n=\hat{S}_n-m_1n$ and
	\begin{equation*}
	a_n= \left\{
	\begin{array}{ll}
		\frac{\sigma n^{p}}{\sqrt{1-2p}}, & p \in [0,1/2),\\
	\sigma\sqrt{n}, & p=1/2,
	\end{array}\right.  ~~~~~
b_n= \left\{
\begin{array}{ll}
	n^{1-2p}, & p \in [0,1/2),\\
	\log n, & p=1/2.
\end{array}\right.
\end{equation*}
Theorems 1.2  in  \cite{HZZY}  gives that  
\begin{equation*}
	\frac{1}{\sqrt{b_n\log \log b_n}}\big|M_n/a_n-W(b_n)\big|{\rightarrow} 0~~a.s.
\end{equation*}
Hence (\ref{Gn1}) and (\ref{Gn2}) follow immediately by  Theorem \ref{mainthm13}. 	The proof of \eqref{Gn3} is similar and we omit the details. 
\end{proof}

\begin{appendices}
	\renewcommand{\thesection}{Appendix A}
	\renewcommand{\thelemma}{\Alph{section}.\arabic{lemma}}
\renewcommand{\theequation}{\Alph{section}.\arabic{equation}}

\section{ Proof of Propositions \ref{mainthm212} 
	and \ref{mainthm222}}  \label{seca}

In order to prove Propositions \ref{mainthm212} 
	and \ref{mainthm222}, 
 we use the same method as in \cite{HZZY} and start with some preliminaries.

\begin{lemma} \label{lemmaAA5} Suppose that $\mathbb{E}|X_1|^{r}<\infty$,~$r>1$ and $\alpha(r-1)<1$.
	Then  we have \begin{eqnarray}
		\label{Esn}
		\mathbb{E}(\hat{S}^*_n)=m_1n+\left\{
		\begin{array}{ll}
			o(n^{1-\alpha(r-1)}), & p \in  [0, 1-\alpha(r-1)),\\
			O(n^p), &  p \in [1-\alpha(r-1),1),\\
		\end{array}\right. 
	\end{eqnarray}
and for $p\in  [0,1)$,
\begin{eqnarray}
		\label{Echecksn}
		\mathbb{E}(\check{S}^*_n)=\check{\mu}n+
			o(n^{1-\alpha(r-1)}). 
	\end{eqnarray}
\end{lemma}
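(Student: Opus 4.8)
The plan is to set up and solve the linear recursion satisfied by $\mathbb{E}(\hat S^*_n)$, compare it with the untruncated mean $\mathbb{E}(\hat S_n)=m_1n$, and then bound the error introduced by the truncation $Z_k=X_kI(|X_k|\le k^\alpha)$. First I would record the one-step recursion: conditioning on $\mathscr{F}_{n-1}$ and using that $(\varepsilon_n,U_n)$ is independent of the earlier variables, the reinforcement rule (\ref{defhatX}) applied to $\{Z_k\}$ gives $\mathbb{E}(\hat Z_n)=(1-p)\mathbb{E}(Z_n)+\frac{p}{n-1}\sum_{k=1}^{n-1}\mathbb{E}(\hat Z_k)$ for $n\ge2$, with $\mathbb{E}(\hat Z_1)=\mathbb{E}(Z_1)$. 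Hence $T_n:=\mathbb{E}(\hat S^*_n)=\sum_{k\le n}\mathbb{E}(\hat Z_k)$ satisfies $T_n=(1-p)\mathbb{E}(Z_n)+\hat\gamma_{n-1}T_{n-1}$. Since $\hat a_{n-1}/\hat a_n=\hat\gamma_{n-1}$, multiplying by $\hat a_n$ and telescoping yields $\hat a_nT_n=\mathbb{E}(Z_1)+(1-p)\sum_{k=2}^n\hat a_k\mathbb{E}(Z_k)$. Writing $\mathbb{E}(Z_k)=m_1-r_k$ with $r_k=\mathbb{E}(X_1I(|X_1|>k^\alpha))$, and using the arithmetic identity $(1-p)\sum_{k=2}^n\hat a_k=n\hat a_n-1$ (which follows from $(n+1)\hat a_{n+1}-n\hat a_n=(1-p)\hat a_{n+1}$, or equivalently from the exact mean $\mathbb{E}(\hat S_n)=m_1n$ proved by the one-line induction $\mathbb{E}(\hat X_n)=m_1$), I would arrive at
\[
T_n=m_1n+\hat a_n^{-1}\Big[(\mathbb{E}(Z_1)-m_1)-(1-p)\sum_{k=2}^n\hat a_kr_k\Big].
\]

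Next I would estimate the bracketed error. By the asymptotics already established in the proof of Lemma \ref{lemma ansn} (see (\ref{asy of hat a_n})), $\hat a_n^{-1}\le Cn^{p}$ and $\hat a_k\le Ck^{-p}$; moreover $\mathbb{E}(Z_1)-m_1$ is a bounded constant and $|r_k|\le\beta_k:=\mathbb{E}(|X_1|I(|X_1|>k^\alpha))$. Therefore $|T_n-m_1n|\le Cn^{p}\big(1+\sum_{k=2}^nk^{-p}\beta_k\big)$, and I would split into two regimes. If $p<1-\alpha(r-1)$, I use $\beta_k\le k^{-\alpha(r-1)}\epsilon_k$ with $\epsilon_k:=\mathbb{E}(|X_1|^rI(|X_1|>k^\alpha))\to0$; since then $p+\alpha(r-1)<1$ the series $\sum_{k\le n}k^{-p-\alpha(r-1)}$ diverges like $n^{1-p-\alpha(r-1)}$, so a Toeplitz (Ces\`aro) argument gives $\sum_{k\le n}k^{-p-\alpha(r-1)}\epsilon_k=o(n^{1-p-\alpha(r-1)})$, whence $|T_n-m_1n|=O(n^{p})+o(n^{1-\alpha(r-1)})=o(n^{1-\alpha(r-1)})$. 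If instead $p\ge1-\alpha(r-1)$, I would prove $\sum_{k\ge1}k^{-p}\beta_k<\infty$ directly by Tonelli:
\[
\sum_{k\ge1}k^{-p}\beta_k=\mathbb{E}\Big(|X_1|\sum_{1\le k<|X_1|^{1/\alpha}}k^{-p}\Big)\le C\,\mathbb{E}|X_1|^{\,1+(1-p)/\alpha},
\]
and $1+(1-p)/\alpha\le r$ precisely when $p\ge1-\alpha(r-1)$, so this expectation is finite and $|T_n-m_1n|=O(n^{p})$. This establishes (\ref{Esn}).

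For (\ref{Echecksn}) I would run the identical argument with $\hat a_n,\hat\gamma_n$ replaced by $\check a_n,\check\gamma_n=(n-p)/n$ and the asymptotics (\ref{asy of a_n}). Telescoping gives $\check a_n\mathbb{E}(\check S^*_n)=\mathbb{E}(Z_1)+(1-p)\sum_{k=2}^n\check a_k\mathbb{E}(Z_k)$, and the companion identity $(1-p)\sum_{k=2}^n\check a_k=\frac{1-p}{1+p}(n\check a_n-1)$ together with $\check\mu=(1-p)m_1/(1+p)$ yields $\mathbb{E}(\check S^*_n)=\check\mu n+\check a_n^{-1}\big[(\mathbb{E}(Z_1)-\check\mu)-(1-p)\sum_{k=2}^n\check a_kr_k\big]$. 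Now $\check a_n^{-1}\le Cn^{-p}$ and $\check a_k\le Ck^{p}$, so $|\mathbb{E}(\check S^*_n)-\check\mu n|\le Cn^{-p}\big(1+\sum_{k\le n}k^{p}\beta_k\big)$; since $p-\alpha(r-1)>-1$ (as $\alpha(r-1)<1$), the bound $\beta_k\le k^{-\alpha(r-1)}\epsilon_k$ and the Toeplitz argument give $\sum_{k\le n}k^{p-\alpha(r-1)}\epsilon_k=o(n^{1+p-\alpha(r-1)})$, hence $|\mathbb{E}(\check S^*_n)-\check\mu n|=o(n^{1-\alpha(r-1)})+O(n^{-p})=o(n^{1-\alpha(r-1)})$ for every $p\in[0,1)$; here the shrinking prefactor $n^{-p}$ is exactly what makes the single estimate valid up to $p\to1$.

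The main obstacle is the borderline exponent $p=1-\alpha(r-1)$ in the positive-reinforcement case: there the crude bound $\beta_k\le k^{-\alpha(r-1)}\mathbb{E}|X_1|^r$ only gives $\sum_{k\le n}k^{-1}\le C\log n$ and hence $O(n^{p}\log n)$, which is weaker than claimed. Obtaining the clean $O(n^{p})$ requires the sharper Tonelli computation above, which converts the moment hypothesis $\mathbb{E}|X_1|^r<\infty$ into summability of $\sum_k k^{-p}\beta_k$ precisely at the threshold $p=1-\alpha(r-1)$. Everything else is routine bookkeeping with the Gamma-function asymptotics already available.
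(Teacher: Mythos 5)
Your proof is correct, and it reaches the same recursion as the paper but solves it differently. The paper also starts from $\mathbb{E}(\hat S^*_{n+1})=\frac{n+p}{n}\mathbb{E}(\hat S^*_n)+(1-p)\mathbb{E}(Z_{n+1})$ and the bound $\mathbb{E}(|X_1|I(|X_1|>n^\alpha))=o(n^{-\alpha(r-1)})$, but for $p\neq 1-\alpha(r-1)$ it disposes of the recursion by citing Lemma 4.2 of \cite{HZZY}, and only at the borderline $p=1-\alpha(r-1)$ does it write out the explicit representation $\tilde a_n=\tilde a_1\gamma_{1,n}(p)+\sum_{j=2}^n\tilde b_j\gamma_{j,n}(p)$ and run exactly your Tonelli computation $\mathbb{E}\bigl(|X_1|\sum_{j<|X_1|^{1/\alpha}}j^{-p}\bigr)\le C\,\mathbb{E}|X_1|^{1+(1-p)/\alpha}=C\,\mathbb{E}|X_1|^{r}$; the check-walk case is declared ``similar'' and omitted. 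You instead telescope against $\hat a_n$ (using $\hat a_{n-1}/\hat a_n=\hat\gamma_{n-1}$ and the exact identity $(1-p)\sum_{k=2}^n\hat a_k=n\hat a_n-1$) to get a closed-form error term $\hat a_n^{-1}\bigl[(\mathbb{E}(Z_1)-m_1)-(1-p)\sum_{k\le n}\hat a_k r_k\bigr]$, which makes the argument self-contained (no appeal to the external recursion lemma), treats the whole range $p\ge 1-\alpha(r-1)$ uniformly rather than splitting off the boundary, and supplies the negatively reinforced case explicitly via the companion identity $(1+p)\sum_{k=2}^n\check a_k=n\check a_n-1$, which is precisely where $\check\mu=(1-p)m_1/(1+p)$ emerges. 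What the paper's route buys is brevity by reusing machinery from \cite{HZZY}; what yours buys is transparency and a single estimate covering the critical exponent. Two trivial points to tidy: in the Tonelli step the bound $\sum_{1\le k<|X_1|^{1/\alpha}}k^{-p}\le C|X_1|^{(1-p)/\alpha}$ should be read with the convention that the sum is void (or the bound replaced by a constant) when $|X_1|\le 1$, so the conclusion is finiteness of $\mathbb{E}|X_1|^{1+(1-p)/\alpha}$ via $\mathbb{E}|X_1|^r<\infty$ and $1+(1-p)/\alpha\le r$; and the Toeplitz step should note that the weights $k^{-p-\alpha(r-1)}$ (resp.\ $k^{p-\alpha(r-1)}$) have divergent partial sums because the exponent exceeds $-1$, which is exactly your stated condition.
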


\begin{proof}  We only prove (\ref{Esn})  since the proof of (\ref{Echecksn}) is similar
and we omit the details.

Note that
\bestar\label{hatZnConE}
\mathbb{E}\big( \hat{Z}_{n+1} \big| \mathscr{F}_n \big) = p \frac{\hat{Z}_1 + \cdots + \hat{Z}_n}{n}+(1-p)\mathbb{E}(Z_{n+1})= \frac{p}{n}\hat{S}^*_n + (1-p)\mathbb{E}(Z_{n+1}).
\eestar
Then
	\bestar
	\mathbb{E}( \hat{S}^*_{n+1})=\frac{n+p}{n}\mathbb{E}(\hat{S}^*_n) + (1-p)\mathbb{E}(Z_{n+1}),\label{indehat}
	\eestar
	and hence
	\bestar
	\tilde{a}_{n+1}=\frac{n+p}{n}\tilde{a}_n +\tilde{b}_{n+1},
	\eestar
where
	$
	\tilde{a}_n=\mathbb{E}(\hat{S}^*_n)-m_1n$ 
and $\tilde{b}_n=-(1-p)\mathbb{E}(X_nI(|X_n|>n^{\alpha})).$
Observe that
	\begin{eqnarray}
		|\tilde{b}_n|\le \mathbb{E}(|X_1|I(|X_1|>n^{\alpha}))\le  n^{-\alpha(r-1)}\mathbb{E}(|X_1|^{r}I(|X_1|>n^{\alpha}))=o(n^{-\alpha(r-1)}).\nonumber
	\end{eqnarray}
	Applying   Lemma 4.2 in \cite{HZZY}   gives that 
	\begin{eqnarray}\label{Esn1}
		\mathbb{E}(\hat{S}^*_n)=m_1n+\left\{
		\begin{array}{ll}
			o(n^{1-\alpha(r-1)}), & p \in  [0, 1-\alpha(r-1)),\\
			O(n^p), &  p \in (1-\alpha(r-1),1).\\
		\end{array}\right.  
	\end{eqnarray}

For $p=1-\alpha(r-1)$,  note that
$
\tilde{a}_{n}=\tilde{a}_1\gamma_{1,n}(p) 
+\sum_{j=2}^n \tilde{b}_{j}\gamma_{j,n}(p),
$
where
\bestar
\gamma_{j,n}(p)=\prod_{k=j}^{n-1} \frac{k+p}{k}=\frac{\Gamma(n+p)}{\Gamma(n)}\frac{\Gamma(j)}{\Gamma(j+p)},~~~~j\le n.
\eestar
Similar arguments as in the proof of Lemma 4.6 in \cite{HZZY} show that
there exists $n_2\in \mathbb{N}$ such that
	\begin{eqnarray*}
		\sum_{j=n_2}^{n} |\tilde{b}_j| \gamma_{j,n}(p) &\le& C n^{p} \sum_{j=n_2}^{n}j^{-p}\mathbb{E}(|X_{1}|I(|X_{1}|>j^{\alpha}))\\
		&\le& C n^{p} \mathbb{E}\Big(|X_{1}|\sum_{j=1}^n j^{-p}I(|X_{1}|>j^{\alpha})\Big)\\
		&\le& C n^{p} \mathbb{E}\Big(|X_{1}|\sum_{|X_1|^{1/\alpha}>j} j^{-p})\Big)\\
		&\le& C n^{p}\mathbb{E}(|X_1|^{1+(1-p)/\alpha})=C n^{p}\mathbb{E}(|X_1|^{r})=O(n^{p}),
	\end{eqnarray*}
and subsequently,  $\mathbb{E}(\hat{S}^*_n)=m_1n+O(n^p)$ 
	holds  for $p=1-\alpha(r-1)$. The proof of  Lemma \ref{lemmaAA5}  is complete.
\end{proof}

\begin{lemma} \label{lemma5} Suppose that $\mathbb{E}|X_1|^{2+\delta}<\infty,~0<\delta<2$ and  $\alpha=1/(2+\delta)$.
	Then 
	\be
	&&\text{Var}(\hat{S}^*_n) =\left\{
	\begin{array}{ll}
		\sigma^2 n/(1-2p)	+o(n^{2/(2+\delta)}), & p \in [0, 1/(2+\delta)),\\
 \sigma^2 n/(1-2p)	+o(n^{2/(2+\delta)}\log n), & p=1/(2+\delta),\\
\sigma^2 n/(1-2p)	+O(n^{2p}), & p \in(1/(2+\delta), 1/2),\\
		\sigma^2 n\log n+O(n),&  p =1/2,
	\end{array}\right. \label{Varsn}
	\ee	
and for $p\in  [0,1) $,
\be
\text{Var}(\check{S}^*_n) ={\check{\sigma}^2n}/({1+2p})+o(n^{2/(2+\delta)}).
\label{Varchecksn}
\ee
\end{lemma}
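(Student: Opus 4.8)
The plan is to work directly with the exact second‑moment recursions of the truncated reinforced walks, reducing the statement to estimating the discrepancies created by the truncation $Z_n=X_nI(|X_n|\le n^\alpha)$.

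First I would set up the recursion for $u_n:=\text{Var}(\hat{S}^*_n)$. Conditioning on $\mathscr{F}_{n-1}$ (using that $\varepsilon_n,U_n,Z_n$ are independent of it) gives $\mathbb{E}(\hat{Z}_n\mid\mathscr{F}_{n-1})=(1-p)\mathbb{E}(Z_n)+\frac{p}{n-1}\hat{S}^*_{n-1}$ and $\mathbb{E}(\hat{Z}_n^2)=q_n$, where $q_n=(1-p)\mathbb{E}(Z_n^2)+\frac{p}{n-1}\sum_{j=1}^{n-1}q_j$. With $h_n:=\mathbb{E}(\hat{S}^*_n)$ and $g_n:=\mathbb{E}((\hat{S}^*_n)^2)$ this yields $h_n=\frac{n-1+p}{n-1}h_{n-1}+(1-p)\mathbb{E}(Z_n)$ and $g_n=\frac{n-1+2p}{n-1}g_{n-1}+2(1-p)\mathbb{E}(Z_n)h_{n-1}+q_n$, and forming $u_n=g_n-h_n^2$ collapses to $u_n=\frac{n-1+2p}{n-1}u_{n-1}+R_n$ with $R_n=q_n-(1-p)^2(\mathbb{E}Z_n)^2-\frac{2p(1-p)\mathbb{E}(Z_n)h_{n-1}}{n-1}-\frac{p^2h_{n-1}^2}{(n-1)^2}$. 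Substituting the untruncated values $q_n=m_2$, $\mathbb{E}(Z_n)=m_1$, $h_{n-1}=m_1(n-1)$ gives $R_n=m_2-m_1^2=\sigma^2$ (since $(1-p)^2+2p(1-p)+p^2=1$), so $R_n=\sigma^2+\rho_n$, where $\rho_n$ is a bounded‑coefficient combination of $q_n-m_2$, $(\mathbb{E}Z_n)^2-m_1^2$ and $h_{n-1}/(n-1)-m_1$.

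Solving this linear recursion with the integrating factor $\theta_n:=\prod_{j=1}^{n-1}\frac{j+2p}{j}=\Gamma(n+2p)/(\Gamma(n)\Gamma(1+2p))$ gives $u_n=\theta_nu_1+\sigma^2\sum_{k=2}^n\beta_{k,n}+\sum_{k=2}^n\rho_k\beta_{k,n}$ with $\beta_{k,n}=\theta_n/\theta_k$. For the first two terms I would use the telescoping identity $\sum_{k=1}^n\Gamma(k)/\Gamma(k+c)=\frac{1}{1-c}(\Gamma(n+1)/\Gamma(n+c)-1/\Gamma(c))$ for $c\ne1$, which with $c=2p$ gives $\sum_{k=1}^n\beta_{k,n}=\frac{1}{1-2p}(n-\Gamma(n+2p)/(\Gamma(n)\Gamma(2p)))$ for $p\ne1/2$ and $\sum_{k=1}^n\beta_{k,n}=n\sum_{k=1}^nk^{-1}$ for $p=1/2$; with $\Gamma(n+2p)/\Gamma(n)=O(n^{2p})$ and $\theta_n=O(n^{2p})$ from \eqref{Gammax} this produces $\theta_nu_1+\sigma^2\sum_{k=2}^n\beta_{k,n}=\frac{\sigma^2n}{1-2p}+O(n^{2p})$ for $p\in[0,1/2)$ and $=\sigma^2n\log n+O(n)$ for $p=1/2$. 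The remaining sum is controlled via the truncation estimates $m_2-\mathbb{E}(Z_n^2)\le n^{-\alpha\delta}\mathbb{E}(|X_1|^{2+\delta}I(|X_1|>n^\alpha))=o(n^{-\alpha\delta})$ and $|m_1-\mathbb{E}(Z_n)|=o(n^{-\alpha(1+\delta)})$: feeding the first into the recursion for $q_n$ (same form as the one for $h_n$, index $p$, so by the argument of Lemma \ref{lemmaAA5} and $p<1/2<2/(2+\delta)$ one gets $\sum_{j\le n}q_j=m_2n+o(n^{1-\alpha\delta})$) yields $q_n-m_2=o(n^{-\alpha\delta})$, while \eqref{Esn} gives $h_{n-1}/(n-1)-m_1=o((n-1)^{-\alpha(1+\delta)})$ for $p<1/(2+\delta)$ and $O((n-1)^{p-1})$ for $p\ge1/(2+\delta)$. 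Hence $\rho_n=o(n^{-\alpha\delta})$ when $p\le1/(2+\delta)$ or $p=1/2$, and $\rho_n=o(n^{-\alpha\delta})+O(n^{p-1})$ when $1/(2+\delta)<p<1/2$; using $\beta_{k,n}\le C(n/k)^{2p}$ one has $|\sum_{k=2}^n\rho_k\beta_{k,n}|\le Cn^{2p}\sum_{k=2}^nk^{-2p}|\rho_k|$, which is $o(n^{1-\alpha\delta})=o(n^{2/(2+\delta)})$ when $2p+\alpha\delta<1$ (i.e.\ $p<1/(2+\delta)$), is $o(n^{2p}\log n)=o(n^{2/(2+\delta)}\log n)$ when $2p+\alpha\delta=1$ (using $\sum_{k\le n}k^{-1}\varepsilon_k=o(\log n)$ for $\varepsilon_k\to0$), is $O(n^{2p})$ when $2p+\alpha\delta>1$ since then $\sum k^{-2p-\alpha\delta}$ and $\sum k^{-p-1}$ converge, and is $o(n)$ when $p=1/2$ (where $\beta_{k,n}\le Cn/k$); this gives \eqref{Varsn}. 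The walk $\check{S}^*_n$ is handled identically: one checks $\mathbb{E}(\check{Z}_n^2)=q_n$, obtains $\text{Var}(\check{S}^*_n)=\frac{n-1-2p}{n-1}\text{Var}(\check{S}^*_{n-1})+\check{\sigma}^2+\check{\rho}_n$ with $\check{\rho}_n=o(n^{-\alpha\delta})$ via \eqref{Echecksn}, uses $\sum_{k=1}^n\check{\beta}_{k,n}=\frac{1}{1+2p}(n-\Gamma(n-2p)/(\Gamma(n)\Gamma(-2p)))=\frac{n}{1+2p}+O(1)$, and with $\check{\beta}_{k,n}\le C(k/n)^{2p}$ bounds the error by $o(n^{1-\alpha\delta})=o(n^{2/(2+\delta)})$, giving \eqref{Varchecksn}.

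I expect the main obstacle to be the control of $q_n-m_2$: this is not a direct consequence of the one‑variable truncation estimate because $q_n=\mathbb{E}(\hat{Z}_n^2)$ is itself defined through a reinforcement recursion, so one must first solve that auxiliary recursion; beyond that, the argument is mostly bookkeeping across the several $p$‑regimes, with the borderline case $p=1/(2+\delta)$ — where the relevant sum diverges only logarithmically — being the delicate point.
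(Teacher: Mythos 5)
Your treatment of \eqref{Varsn} is correct and is, in substance, the paper's own argument: you re-derive the same variance recursion that the paper imports from the proof of Lemma 4.6 of \cite{HZZY} (compare your $u_n=\frac{n-1+2p}{n-1}u_{n-1}+R_n$ with (\ref{Varrec})--(\ref{VarS})), you control the inhomogeneity by the same truncation estimates ((\ref{remZi}), \eqref{Esn}, and Lemma \ref{lemmaAA5} applied with $Z_n^2$ in place of $Z_n$), and the regime analysis, including the borderline cases $p=1/(2+\delta)$ and $p=1/2$, matches the paper's; the only differences are cosmetic (you solve the linear recursion explicitly via the Gamma-product/telescoping identity where the paper cites Lemma 4.2 of \cite{HZZY}, and you bound $q_n-m_2$ termwise rather than through the summed estimate), and your derivations there are sound.

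The check case, however, contains a genuine gap. Your claim that $\check\rho_n=o(n^{-\alpha\delta})$ for \emph{every} $p\in[0,1)$ is not justified by the tools you invoke once $p\ge 2/(2+\delta)$, which is a nonempty part of $[0,1)$ since $2/(2+\delta)\in(1/2,1)$ for $\delta\in(0,2)$. The mean contribution is fine because \eqref{Echecksn} holds for all $p$, but $q_n=\mathbb{E}(\check Z_n^2)=\mathbb{E}(\hat Z_n^2)$ obeys the \emph{positively} reinforced recursion with parameter $p$ (squaring removes the sign flip), i.e.\ exactly the quantity you controlled in the hat case under the restriction $p<2/(2+\delta)$ that you yourself pointed out. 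For $p\ge 2/(2+\delta)$ the Lemma \ref{lemmaAA5}-type argument gives only $\sum_{k\le n}\mathbb{E}(\hat Z_k^2)-m_2n=O(n^{p})$, hence $q_n-m_2=O(n^{p-1})$; moreover this order is genuinely attained whenever $\mathbb{P}(|X_1|>1)>0$, since the explicit solution of that recursion contains the homogeneous term $-\gamma_{1,n}(p)\,\mathbb{E}\big(X_1^2I(|X_1|>1)\big)\asymp -n^{p}$, so this is not merely slack in the bookkeeping. Feeding $O(k^{p-1})$ through your own bound $\check\beta_{k,n}\le C(k/n)^{2p}$ yields an error of order $n^{p}$, not $o(n^{2/(2+\delta)})$. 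As written, therefore, your argument establishes \eqref{Varchecksn} only for $p<2/(2+\delta)$, and in the remaining range it delivers $\mathrm{Var}(\check S^*_n)=\check\sigma^2 n/(1+2p)+O(n^{p})$ --- the analogue of the $O(n^{2p})$ regime in \eqref{Varsn}, and incidentally all that is needed downstream in Proposition \ref{mainthm222}, whose rate $\check\delta_n$ also changes form at $p=2/(2+\delta)$ --- but it does not prove the stated $o(n^{2/(2+\delta)})$ bound there, and no sharpening of the estimates you use can, for unbounded $X_1$. You would need either to restrict that part of your claim to $p<2/(2+\delta)$ (stating $O(n^p)$ for larger $p$) or to produce a genuinely different argument for $p\ge 2/(2+\delta)$.
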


\begin{proof}   We only prove (\ref{Varsn})  since the proof of (\ref{Varchecksn}) is similar
and we omit the details. 
It follows from the proof of Lemma 
4.6 in \cite{HZZY} that
\be
	\text{Var}\big( \hat{S}^*_{n+1} \big)
=\frac{n+2p}{n}\mbox{Var}(\hat{S}^*_n)+ b_{n+1},  \label{Varrec}
	\ee
where
	\begin{equation}\label{VarS}
		b_{n+1}=\frac{p}{n}\sum_{k=1}^n \mathbb{E}(\hat{Z}_{k}^2) + (1-p)\mathbb{E}(Z_{n+1}^2)-\Big( \frac{p}{n}\mathbb{E}(\hat{S}^*_n) + (1-p)\mathbb{E}(Z_{n+1}) \Big)^2.
	\end{equation}

Note that
	\be
		|\mathbb{E}(Z_{n+1}^i)-m_i|&=&|\mathbb{E}(X^i_{1}I(|X_{1}|>n^{\alpha}))|\nonumber\\
&\le& n^{-\alpha(2-i+\delta)}\mathbb{E}(|X_{1}|^{2+\delta}I(|X_{1}|>n^{\alpha}))=o(n^{-\alpha(2-i+\delta)}),~~~i=1,2.~~~~   \label{remZi}
	\ee
By  using (\ref{Esn1}) and recalling that $\alpha=1/(2+\delta)$, we have
	\be 
	\Big( \frac{p}{n}\mathbb{E}(\hat{S}^*_n) + (1-p)\mathbb{E}(Z_{n+1}) \Big)^2-m_1^2
&=&O(1) \Big( \frac{p}{n}\mathbb{E}(\hat{S}^*_n) + (1-p)\mathbb{E}(Z_{n+1}) -m_1\Big)\nonumber\\
&=&\left\{
		\begin{array}{ll}
			o(n^{-(1+\delta)/(2+\delta)}), & p \in [0, 1/(2+\delta)),\\
			O(n^{-(1-p)}), &  p \in [1/(2+\delta),1).\\
		\end{array}\right.    \label{sasasa}
	\ee
By replacing $Z_n$ in the definition of $\hat{S}^*_n$ with $Z_n^2=X_n^2I(X_n^2\le n^{2\alpha})$, 
it also follows from Lemma \ref{lemmaAA5} that
	\begin{eqnarray*}
		\sum_{k=1}^n \mathbb{E}(\hat{Z}_{k}^2) -m_2n=\left\{
		\begin{array}{ll}
			o(n^{2/(2+\delta)}), & p \in [0,2/(2+\delta)),\\
			O(n^{p}), &  p \in [2/(2+\delta),1).\\
		\end{array}\right.  
	\end{eqnarray*}
This together with (\ref{VarS})-(\ref{sasasa}) implies that
	\begin{eqnarray}\label{bnsigma}
		b_{n+1}-\sigma^2=\left\{
		\begin{array}{ll}
			o(n^{-\delta/(2+\delta)}), & p \in [0, 2/(2+\delta)),\\
			O(n^{-(1-p)}), &  p \in [2/(2+\delta),1).\\
		\end{array}\right.
	\end{eqnarray}

If $0 \le p<1/2$, then $p< 2/(2+\delta)$ and, by (\ref{Varrec}),  we have 
	\begin{eqnarray*}
		\text{Var}\big( \hat{S}^*_{n+1} \big)-\sigma^2 (n+1)/(1-2p)=\frac{n+2p}{n}\left(\mbox{Var}(\hat{S}^*_n)-\sigma^2 n/(1-2p)\right)+b_{n+1}-\sigma^2.
	\end{eqnarray*}
Hence, applying Lemma 4.2 in \cite{HZZY} and (\ref{bnsigma}) gives that 
	\begin{equation*}
		\text{Var}(\hat{S}^*_n)=\sigma^2 n/(1-2p)+
\left\{
\begin{array}{ll}
	o(n^{2/(2+\delta)}), & p \in [0, 1/(2+\delta)),\\
 o(n^{2/(2+\delta)}\log n), & p=1/(2+\delta),\\
O(n^{2p}), &  p \in (1/(2+\delta), 1/2).
\end{array}
\right.
	\end{equation*}
If $p=1/2$,  then
	\begin{eqnarray}
		&& \frac{1}{n+1}\text{Var}\big( \hat{S}^*_{n+1} \big)-\sigma^2 \log (n+1)=\frac{1}{n}\mbox{Var}(\hat{S}^*_n)-\sigma^2 \log n
+c_{n+1},  \label{recuvars}
\end{eqnarray}
where
\begin{eqnarray*}
		c_{n+1}=\frac{1}{n+1}(b_{n+1}-\sigma^2)
+\sigma^2\Big(\log\Big(1- \frac{1}{n+1}\Big)+\frac{1}{n+1}\Big).
	\end{eqnarray*}
By  (\ref{bnsigma}) and the fact that 
$\log (1-x)+x \sim (-1/2)x^2$ as $x\rightarrow 0$, 
we have $c_{n+1}=o(n^{-1-\delta/(2+\delta)})$
and hence $\sum_{n=1}^{\infty} |c_{n+1}|<\infty$.
This together with (\ref{recuvars}) implies that $\mbox{Var}(\hat{S}^*_n)=\sigma^2 n\log n+O(n)$. The proof of Lemma \ref{lemma5} is complete.
\end{proof}

	\begin{lemma} \label{lemmaa3}
If $\mathbb{E}(|X|^{r})<\infty$ with $1\le r<2$,  then
	\bestar
\frac{\hat{S}^*_{n}-\mathbb{E}(\hat{S}^*_{n})}{n^{(1+\alpha(2-r))/2}}\rightarrow 0~~\mathrm{a.s.},  \qquad \frac{\check{S}^*_{n}-\mathbb{E}(\check{S}^*_{n})}{n^{(1+\alpha(2-r))/2}}\rightarrow 0~~\mathrm{a.s.}
\eestar
\end{lemma}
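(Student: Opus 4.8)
\emph{Proof idea.} The plan is to route everything through the martingale already constructed in the paper. Recall from \eqref{hatYn} that $\hat{M}^*_n=\hat{a}_n\big(\hat{S}^*_n-\mathbb{E}(\hat{S}^*_n)\big)=\sum_{k=1}^n\hat{Y}_k$ with $\hat{Y}_k=\hat{a}_k\big(\hat{Z}_k-\mathbb{E}(\hat{Z}_k\mid\mathscr{F}_{k-1})\big)$, and that $\hat{a}_n\sim\Gamma(1+p)n^{-p}$, $\check{a}_n\sim\Gamma(1-p)n^{p}$ (from the proof of Lemma~\ref{lemma ansn}). Writing $\beta:=(1+\alpha(2-r))/2$, the assertion for $\hat{S}^*_n$ is equivalent to $n^{-(\beta-p)}\hat{M}^*_n\to0$ a.s., and the assertion for $\check{S}^*_n$ to $n^{-(\beta+p)}\check{M}^*_n\to0$ a.s. I would prove both by Kronecker's lemma: show that the series $\sum_k\hat{Y}_k/k^{\beta-p}$ and $\sum_k\check{Y}_k/k^{\beta+p}$ converge a.s.; since the weight sequences are positive and increase to infinity (using $p<\beta$, which holds in particular for the positively reinforced walk when $p\le1/2$, and unconditionally for the negatively reinforced walk), Kronecker then forces $n^{-(\beta-p)}\hat{M}^*_n\to0$ and $n^{-(\beta+p)}\check{M}^*_n\to0$, which is what we want after dividing by $\hat{a}_n$, resp.\ $\check{a}_n$.

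For the a.s.\ convergence of those two series I would bound their $L^2$-norms and invoke the $L^2$-martingale convergence theorem. From \eqref{hatYn} and the inequality $\mathbb{E}(\hat{Z}_k^2)\le\mathbb{E}(Z_k^2)=\mathbb{E}\big(X_1^2 I(|X_1|\le k^\alpha)\big)$ (established as in Lemma~4.1 of \cite{HZZY}, in the same way it is used for \eqref{yn4}), together with $\hat{a}_k\asymp k^{-p}$, one gets $\mathbb{E}(\hat{Y}_k^2)\le Ck^{-2p}\mathbb{E}\big(X_1^2 I(|X_1|\le k^\alpha)\big)$, and likewise $\mathbb{E}(\check{Y}_k^2)\le Ck^{2p}\mathbb{E}\big(X_1^2 I(|X_1|\le k^\alpha)\big)$. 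Hence
\[
\sum_{k=1}^\infty\frac{\mathbb{E}(\hat{Y}_k^2)}{k^{2(\beta-p)}}\le C\sum_{k=1}^\infty\frac{\mathbb{E}\big(X_1^2 I(|X_1|\le k^\alpha)\big)}{k^{1+\alpha(2-r)}},\qquad \sum_{k=1}^\infty\frac{\mathbb{E}(\check{Y}_k^2)}{k^{2(\beta+p)}}\le C\sum_{k=1}^\infty\frac{\mathbb{E}\big(X_1^2 I(|X_1|\le k^\alpha)\big)}{k^{1+\alpha(2-r)}}.
\]
By Tonelli's theorem the common right-hand series equals $C\,\mathbb{E}\big(X_1^2\sum_{k\ge|X_1|^{1/\alpha}\vee1}k^{-1-\alpha(2-r)}\big)$, and since $\alpha(2-r)>0$ an integral comparison gives $\sum_{k\ge m}k^{-1-\alpha(2-r)}\le Cm^{-\alpha(2-r)}$; thus the series is at most $C\,\mathbb{E}\big(X_1^2(|X_1|^{1/\alpha}\vee1)^{-\alpha(2-r)}\big)\le C\,\mathbb{E}\big(1+|X_1|^r\big)<\infty$. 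So both martingales are $L^2$-bounded, hence converge a.s., and Kronecker's lemma together with $\hat{a}_n\sim\Gamma(1+p)n^{-p}$ (resp.\ $\check{a}_n\sim\Gamma(1-p)n^{p}$) finishes the argument.

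The step I expect to be the real obstacle is the finiteness of $\sum_k\mathbb{E}\big(X_1^2 I(|X_1|\le k^\alpha)\big)/k^{1+\alpha(2-r)}$. The crude truncation bound only gives $\mathbb{E}\big(X_1^2 I(|X_1|\le k^\alpha)\big)=o(k^{\alpha(2-r)})$, so the summands are merely $o(1/k)$, which does not yield convergence; what makes the proof work is that the Tonelli interchange collapses the whole sum to a constant multiple of $\mathbb{E}|X_1|^r$ (a Marcinkiewicz--Zygmund-type estimate), so the hypothesis $\mathbb{E}|X_1|^r<\infty$ is used in precisely the form that keeps each weighted martingale $L^2$-bounded. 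The remaining pieces --- the martingale identity, the $L^2$-convergence theorem, Kronecker's lemma, and the Gamma-function asymptotics --- are routine, and the negatively reinforced case is handled by the identical computation.
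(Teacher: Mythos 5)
Your argument is correct and essentially identical to the paper's: both write $\hat S^*_n-\mathbb{E}(\hat S^*_n)$ (resp.\ $\check S^*_n-\mathbb{E}(\check S^*_n)$) via the martingale differences $\hat Y_k$, $\check Y_k$, bound the weighted second moments by a Tonelli/Fubini computation that collapses to $C\,\mathbb{E}|X_1|^r<\infty$, conclude a.s.\ convergence of the weighted series, and finish with Kronecker's lemma. The only (immaterial) differences are that you invoke $L^2$-boundedness and the $L^2$-martingale convergence theorem where the paper passes to conditional variances and cites Theorem 2.15 of Hall and Heyde, and that you make explicit the monotonicity requirement $p<\beta$ for the Kronecker step, which the paper leaves implicit.
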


\begin{proof}  Write $\beta:=(1+\alpha(2-r))/2$.
Similar arguments as in (\ref{yn4}) give that
\bestar
\mathbb{E}(\hat{Y}_{n}^2)= \hat{a}_n^2 \Big(\mathbb{E}(\hat{Z}_{n}^2)-\mathbb{E}\big( \mathbb{E}^2\big( \hat{Z}_{n} \big| \mathscr{F}_{n-1} \big)\big)\Big)\le \hat{a}_n^2 \mathbb{E}(\hat{Z}_{n}^2)\le \hat{a}_n^2 \mathbb{E}({Z}_{n}^2). 
\eestar
By  Fubini's theorem, we have
\bestar
\sum_{n=1}^{\infty}\frac{\mathbb{E}(\hat{Y}_{n}^{2})}{n^{2\beta}\hat{a}_n^2}&\le& \sum_{n=1}^{\infty}\frac{\mathbb{E}(Z_{n}^2)}{n^{2\beta}}=\sum_{n=1}^{\infty}\frac{\mathbb{E}(X_1^2I(|X_1|\le n^{\alpha}))}{n^{2\beta}}\\
&=& \mathbb{E}\Big(\sum_{n=1}^{\infty}\frac{X_1^2I(|X_1|\le n^{\alpha})}{n^{2\beta}}\Big)=\mathbb{E}\Big(X_1^2\sum_{n\ge |X_1|^{1/\alpha}}n^{-{2\beta}}\Big)\\
&\le& C\mathbb{E}(|X_1|^{2+(1-2\beta)/\alpha})=
 C\mathbb{E}(|X_1|^r)<\infty.
\eestar
This implies $\sum_{n=1}^{\infty} n^{-{2\beta}}\hat{a}_n^{-2}\mathbb{E}(\hat{Y}_n^2|\mathscr{F}_{n-1})<\infty$~a.s., and
applying Theorem 2.15 in \cite{HH1980} yields that $\sum_{n=1}^{\infty} n^{-{\beta}}\hat{a}_n^{-1}\hat{Y}_n$ converges a.s. By Kronecker's lemma, we have
\bestar
\frac{\hat{S}^*_{n}-\mathbb{E}(\hat{S}^*_{n})}{n^{\beta}}=\frac{\sum_{k=1}^n \hat{Y}_k}{n^{\beta}\hat{a}_n}\rightarrow 0~~\mathrm{a.s.}
\eestar
Similarly, we can obtain that
\bestar
\frac{\check{S}^*_{n}-\mathbb{E}(\check{S}^*_{n})}{n^{\beta}}\rightarrow 0~~\mathrm{a.s.}
\eestar
The proof of Lemma \ref{lemmaa3} is complete.
\end{proof}

	\begin{lemma} \label{lemmaa4}
Suppose that $0\le \delta<2$ and $\alpha=1/(2+\delta)$.
If $\mathbb{E}(|X|^{2+\delta})<\infty$,  then
	\bestar
&&\mathbb{E}(\hat{Y}_n^2|\mathscr{F}_{n-1})
-\mathbb{E}(\hat{Y}_n^2)=o(1)\hat{a}_n^2n^{-\delta/(2+\delta)}~~~~\mathrm{a.s.}\\
&&\mathbb{E}(\check{Y}_n^2|\mathscr{F}_{n-1})
-\mathbb{E}(\check{Y}_n^2)=o(1)\check{a}_n^2n^{-\delta/(2+\delta)}~~~~\mathrm{a.s.}
\eestar
\end{lemma}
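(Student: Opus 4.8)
The plan is to reduce the statement to two strong laws of large numbers for step‑reinforced sequences that are already at our disposal. First I would write out the conditional variance: since $\hat{Y}_n=\hat{a}_n\big(\hat{Z}_n-\mathbb{E}(\hat{Z}_n\mid\mathscr{F}_{n-1})\big)$, one has $\mathbb{E}(\hat{Y}_n^2\mid\mathscr{F}_{n-1})=\hat{a}_n^2\,\mathrm{Var}(\hat{Z}_n\mid\mathscr{F}_{n-1})$ and $\mathbb{E}(\hat{Y}_n^2)=\hat{a}_n^2\,\mathbb{E}\big[\mathrm{Var}(\hat{Z}_n\mid\mathscr{F}_{n-1})\big]$, so the assertion is equivalent to
\[
\mathrm{Var}(\hat{Z}_n\mid\mathscr{F}_{n-1})-\mathbb{E}\big[\mathrm{Var}(\hat{Z}_n\mid\mathscr{F}_{n-1})\big]=o\big(n^{-\delta/(2+\delta)}\big)\quad\mathrm{a.s.}
\]
By the reinforcement rule, for $n\ge2$ one has $\mathbb{E}(\hat{Z}_n^2\mid\mathscr{F}_{n-1})=\frac{p}{n-1}\sum_{k=1}^{n-1}\hat{Z}_k^2+(1-p)\mathbb{E}(Z_n^2)$ and $V_n:=\mathbb{E}(\hat{Z}_n\mid\mathscr{F}_{n-1})=\frac{p}{n-1}\hat{S}^*_{n-1}+(1-p)\mathbb{E}(Z_n)$; subtracting the unconditional means, the displayed difference equals $\frac{p}{n-1}\sum_{k=1}^{n-1}(\hat{Z}_k^2-\mathbb{E}\hat{Z}_k^2)-\big(V_n^2-\mathbb{E}(V_n^2)\big)$, so it suffices to control these two pieces separately.

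For the first piece the key point is that $\{\hat{Z}_n^2,\,n\ge1\}$ is again a positively step‑reinforced sequence, namely the one produced from the truncated base $\{Z_n^2=X_n^2 I(X_n^2\le n^{2\alpha})\}$ by the \emph{same} $\{\varepsilon_n\}$ and $\{U_n\}$, with partial sums $\hat{T}^*_n:=\sum_{k=1}^{n}\hat{Z}_k^2$. I would therefore rerun the proof of Lemma~\ref{lemmaa3} with $X_1$ replaced by $X_1^2$, the truncation exponent $\alpha$ replaced by $2\alpha$, and $r$ replaced by $r_1:=1+\delta/2\in[1,2)$; this is legitimate because $\mathbb{E}|X_1^2|^{r_1}=\mathbb{E}|X_1|^{2+\delta}<\infty$ and $2\alpha(r_1-1)=\delta/(2+\delta)<1$. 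The output is $\hat{T}^*_n-\mathbb{E}(\hat{T}^*_n)=o\big(n^{(1+2\alpha(2-r_1))/2}\big)=o\big(n^{2/(2+\delta)}\big)$ a.s., the exponent collapsing to $2/(2+\delta)$ precisely because $2\alpha(2-r_1)=(2-\delta)/(2+\delta)$. Dividing by $n-1$ gives $\frac{p}{n-1}\sum_{k=1}^{n-1}(\hat{Z}_k^2-\mathbb{E}\hat{Z}_k^2)=o(n^{-\delta/(2+\delta)})$ a.s.

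For the second piece, Lemma~\ref{lemmaAA5} gives $\mathbb{E}(\hat{S}^*_{n-1})=m_1(n-1)+o(n)$, so $\mathbb{E}(V_n)\to m_1$ and in particular $\sup_n|\mathbb{E}V_n|<\infty$, while $V_n-\mathbb{E}V_n=\frac{p}{n-1}(\hat{S}^*_{n-1}-\mathbb{E}\hat{S}^*_{n-1})$. Applying Lemma~\ref{lemmaa3} with $r=r_1$ yields $\hat{S}^*_n-\mathbb{E}\hat{S}^*_n=o\big(n^{(6+\delta)/(4(2+\delta))}\big)=o\big(n^{2/(2+\delta)}\big)$ a.s. (the exponent is $\le 2/(2+\delta)$ exactly because $\delta<2$), hence $V_n-\mathbb{E}V_n=o(n^{-\delta/(2+\delta)})$ a.s. Then $V_n^2-(\mathbb{E}V_n)^2=(V_n-\mathbb{E}V_n)(V_n+\mathbb{E}V_n)=o(n^{-\delta/(2+\delta)})$ a.s., whereas $\mathbb{E}(V_n^2)-(\mathbb{E}V_n)^2=\mathrm{Var}(V_n)=\frac{p^2}{(n-1)^2}\mathrm{Var}(\hat{S}^*_{n-1})$, which by Lemma~\ref{lemma5} is $O(n^{-1})$, and $O(n^{-1}\log n)$ in the borderline case $p=1/2$, hence also $o(n^{-\delta/(2+\delta)})$; subtracting gives $V_n^2-\mathbb{E}(V_n^2)=o(n^{-\delta/(2+\delta)})$ a.s. This proves the estimate for $\hat{Y}_n$; the estimate for $\check{Y}_n$ is obtained verbatim with $\check{Z}_n,\check{S}^*_n,\check{a}_n$ in place of $\hat{Z}_n,\hat{S}^*_n,\hat{a}_n$, invoking the corresponding halves of Lemmas~\ref{lemmaAA5},~\ref{lemmaa3} and~\ref{lemma5}.

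I expect the main obstacle to be the first piece: one must recognise that squaring preserves the step‑reinforced structure so that Lemma~\ref{lemmaa3} can be reapplied, and then keep the truncation/moment bookkeeping straight — replacing $\alpha$ by $2\alpha$ and $r$ by $1+\delta/2$, which is forced and is exactly the borderline exponent permitted by $\mathbb{E}|X_1|^{2+\delta}<\infty$ — so that the resulting rate is exactly $n^{2/(2+\delta)}$, which after division by $n$ matches the target exponent $-\delta/(2+\delta)$. Everything else is the kind of moment and Tauberian‑type bookkeeping already carried out in Lemmas~\ref{lemmaAA5}--\ref{lemmaa3}.
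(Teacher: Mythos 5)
Your proposal is correct and follows essentially the same route as the paper: the same decomposition into the centered conditional second moment (handled by applying Lemma \ref{lemmaa3} to the step-reinforced walk built from $Z_n^2=X_n^2I(X_n^2\le n^{2\alpha})$, yielding the rate $o(n^{2/(2+\delta)})$) and the squared conditional mean (handled via Lemma \ref{lemmaa3} for $\hat S_n^*-\mathbb{E}\hat S_n^*$ together with the variance bound from Lemma \ref{lemma5}). The only cosmetic difference is that the paper bounds the second piece by $o(n^{-1/2+\varepsilon})$ using $r$ close to $2$, whereas you track the explicit exponent $(6+\delta)/(4(2+\delta))$ with $r=1+\delta/2$; both suffice since $\delta<2$.
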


\begin{proof} Observe that
\be
\mathbb{E}(\hat{Y}_n^2|\mathscr{F}_{n-1})
=\hat{a}_n^2 ( \mathbb{E}( \hat{Z}_{n}^2 | \mathscr{F}_{n-1} ) - \mathbb{E}^2( \hat{Z}_{n} | \mathscr{F}_{n-1} )),  \label{EYN2}
\ee
and
\bestar
\mathbb{E}(\hat{Z}_{n+1}^i|\mathscr{F}_{n})=(1-p) \mathbb{E}(Z_{n+1}^i)+\frac{p}{n}\sum_{k=1}^n \hat{Z}_{k}^i,~~~~~~i=1,2.
\eestar
Hence
\bestar
\mathbb{E}(\hat{Z}_{n+1}^i|\mathscr{F}_{n})-\mathbb{E}(\hat{Z}_{n+1}^i)=\frac{p}{n}\sum_{k=1}^n (\hat{Z}_{k}^i
-\mathbb{E}(\hat{Z}_{k}^i)),~~~~~~i=1,2.
\eestar
By Lemma \ref{lemmaa3},   we have that for any $\varepsilon>0$,
\bestar
\hat{S}^*_{n}-\mathbb{E}(\hat{S}^*_{n})=o(n^{1/2+\varepsilon})~~~~\mathrm{a.s.}
\eestar
This together with  (\ref{Varsn}), and  Lemmas \ref{lemmaAA5} and \ref{lemmaa3} implies that for any $\varepsilon>0$,
	\begin{eqnarray}
	&&\mathbb{E}^2(\hat{Z}_{n+1}|\mathscr{F}_{n})-\mathbb{ E}(\mathbb{E}^2(\hat{Z}_{n+1}|\mathscr{F}_{n}))\nonumber\\
	&&~~~~= \left(\frac{p}{n}\hat{S}^*_n + (1-p)\mathbb{E}(Z_{n+1})\right)^2-\mathbb{ E}\left(\frac{p}{n}\hat{S}^*_n + (1-p)\mathbb{E}(Z_{n+1})\right)^2\nonumber\\
	&&~~~~=2p(1-p)n^{-1}\mathbb{ E}(Z_{n+1})(\hat{S}^*_n-\mathbb{ E}\hat{S}^*_n)+p^2n^{-2}\left((\hat{S}_n^*)^2-\mathbb{ E}(\hat{S}_n^*)^2\right)\nonumber\\
	&&~~~~=o(n^{-1/2+\varepsilon})+p^2n^{-2}\left((\hat{S}_n^*-\mathbb{ E}\hat{S}_n^*)(\hat{S}_n^*+\mathbb{ E}\hat{S}_n^*)-\text{Var}(S_n^*)\right)\nonumber\\
	&&~~~~=o(n^{-1/2+\varepsilon})~~a.s. \label{key111}
\end{eqnarray}

By replacing $Z_n$ in the definition of $\hat{S}^*_n$ with $Z_n^2=X_n^2I(X_n^2\le n^{2\alpha})$, 
it also follows from Lemma \ref{lemmaa3}  that
\bestar
\mathbb{E}(\hat{Z}_{n+1}^2|\mathscr{F}_{n})
-\mathbb{E}(\hat{Z}_{n+1}^2)=\frac{p}{n}\sum_{k=1}^n (\hat{Z}_{k}^2
-\mathbb{E}(\hat{Z}_{k}^2))
=o(n^{(\alpha(2-\delta)-1)/2})=o(n^{-\delta/(2+\delta)})~~~~\mathrm{a.s.}
\eestar
Combining this with (\ref{EYN2}) and (\ref{key111}) yields that
\bestar
&&\mathbb{E}(\hat{Y}_n^2|\mathscr{F}_{n-1})
-\mathbb{E}(\hat{Y}_n^2)\\
&&~~~~=\hat{a}_n^2 ( \mathbb{E}( \hat{Z}_{n}^2 | \mathscr{F}_{n-1})-\mathbb{E}(\hat{Z}_{n}^2 )  - (\mathbb{E}^2(\hat{Z}_{n}|\mathscr{F}_{n-1})-\mathbb{ E}(\mathbb{E}^2(\hat{Z}_{n}|\mathscr{F}_{n-1}))))\\
&&~~~~=o(1)\hat{a}_n^2n^{-\delta/(2+\delta)}~~~~\mathrm{a.s.}
\eestar
Similarly,  we have
\begin{equation*}
	\mathbb{E}(\check{Y}_n^2|\mathscr{F}_{n-1})
	-\mathbb{E}(\check{Y}_n^2)=o(1)\check{a}_n^2n^{-\delta/(2+\delta)}~~~\mathrm{a.s.}
\end{equation*}  The proof of Lemma \ref{lemmaa4} is complete.
\end{proof}

In order to deal with the remaining after truncation,
we use the idea in \cite{HZZY} and represent $\hat{S}_n, \hat{S}^*_n, \check{S}_n, \check{S}^*_n$ as
\be
&& \hat{S}_n=\sum_{j=1}^{n} N_j(n)X_j,~~~~\hat{S}^*_n=\sum_{j=1}^{n} N_j(n)Z_j, \label{hatsexpress}\\
&&\check{S}_n=\sum_{j=1}^{n} \Delta_j(n)X_j,~~~~\check{S}^*_n=\sum_{j=1}^{n} \Delta_j(n)Z_j,\nonumber
\ee
where $	N_j(n)$ and $\Delta_j(n)$ are defined as in Section 4 of
\cite{HZZY}. In order to prove Propositions \ref{mainthm212}
and \ref{mainthm222}, we also need to extend Lemmas  4.3 and 4.4 in \cite{HZZY}, which provide
 the strong limit results 
on $N_j(n)$ and $\Delta_j(n)$.

\begin{lemma} \label{Njn} For any   $p\in [0, 1)$, we have
	\begin{equation*}\label{diffe11}
		\frac{N_j(n)}{n^{p}\log \log  n} \rightarrow 0 \quad\mathrm{a.s.},
	\end{equation*} 
and
\begin{equation*}\label{diffe12}
	\frac{\Delta_j(n)}{n^{p/2}\log  n} \rightarrow 0 \quad\mathrm{a.s.}
\end{equation*} 
\end{lemma}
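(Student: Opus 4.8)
The plan is to expose, for each fixed $j$, the martingale that governs $N_j(n)$ and $\Delta_j(n)$ and to read the two rates off its fluctuation behaviour; the version for each fixed $j$ is all that the truncation argument in Propositions \ref{mainthm212}--\ref{mainthm222} needs, since a.s.\ only finitely many indices satisfy $|X_j|>j^{\alpha}$.

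\textbf{The term $N_j(n)$.} By the construction of $\hat S_n$, conditionally on $\mathscr{F}_n$ the position $n+1$ joins the class of copies of $X_j$ with probability $pN_j(n)/n$ and otherwise $N_j(n+1)=N_j(n)$, so $\mathbb{E}(N_j(n+1)\mid\mathscr{F}_n)=N_j(n)(n+p)/n=\hat\gamma_n^{-1}N_j(n)$ for $n\ge j$. Since $\hat a_{n+1}=\hat a_n\hat\gamma_n^{-1}$, the process $\{\hat a_nN_j(n),\mathscr{F}_n\}_{n\ge j}$ is a nonnegative martingale, hence converges a.s.\ to a finite limit. Combined with $\hat a_n\sim\Gamma(1+p)n^{-p}$ (see (\ref{asy of hat a_n})) this shows $N_j(n)/n^{p}$ converges a.s.\ to a finite random variable, so in particular $N_j(n)=O(n^{p})$ a.s.\ and $N_j(n)/(n^{p}\log\log n)\to0$ a.s. (When $p=0$ one has $N_j(n)\equiv1$ for $n\ge j$ and the claim is trivial.)

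\textbf{The term $\Delta_j(n)$.} The same bookkeeping for $\check S_n$, with $R_j(n)$ the size at time $n$ of the class of copies of $\pm X_j$, gives $\mathbb{E}(\Delta_j(n+1)\mid\mathscr{F}_n)=\Delta_j(n)(n-p)/n=\check\gamma_n\Delta_j(n)$ and $\mathbb{E}(\Delta_j(n+1)^2\mid\mathscr{F}_n)=\Delta_j(n)^2(1-2p/n)+pR_j(n)/n$, so that $\{\check a_n\Delta_j(n),\mathscr{F}_n\}_{n\ge j}$ is a martingale. Because this martingale changes sign, rather than invoking convergence I would control it in $L^2$: writing $v_n=\mathbb{E}(\Delta_j(n)^2)$ and using that $R_j(n)$ obeys the same recursion as $N_j(n)$, so $\mathbb{E}(R_j(n))=O(n^{p})$, Lemma 4.2 of \cite{HZZY} applied to $v_{n+1}=v_n(1-2p/n)+p\mathbb{E}(R_j(n))/n$ yields $v_n=O(n^{p})$, whence $\mathbb{E}((\check a_n\Delta_j(n))^2)=\check a_n^2 v_n=O(n^{3p})$ by (\ref{asy of a_n}). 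From this the rate follows by a dyadic blocking argument: for $N=2^m$, Doob's maximal inequality gives $\mathbb{P}(\max_{k\le N}|\check a_k\Delta_j(k)|>\varepsilon N^{3p/2}\log N)=O(1/(\varepsilon^2 m^2))$, which is summable in $m$, so by Borel--Cantelli and interpolation over $k\in[2^m,2^{m+1})$ we get $\check a_n\Delta_j(n)=o(n^{3p/2}\log n)$ a.s.; dividing by $\check a_n\sim\Gamma(1-p)n^{p}$ gives $\Delta_j(n)=o(n^{p/2}\log n)$ a.s., i.e.\ $\Delta_j(n)/(n^{p/2}\log n)\to0$ a.s. (again $p=0$ is trivial).

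The routine ingredients are the two one-step conditional identities, which are the kind of computation behind Lemmas 4.3--4.4 in \cite{HZZY}, and the standard dyadic Borel--Cantelli step. The one delicate point is the $L^2$ bound $v_n=O(n^{p})$ for the counterbalanced class: one has to check that the homogeneous decay $\prod_{k\le n}(1-2p/k)\asymp n^{-2p}$ exactly cancels the accumulated inhomogeneous contribution $\sum_{k\le n}k^{2p}\cdot k^{p-1}\asymp n^{3p}$, which is why $\Delta_j(n)$ grows only like $n^{p/2}$ and why the crude factor $\log n$ suffices — sharpening it to $\sqrt{\log\log n}$ would require a martingale law of the iterated logarithm rather than a bare second-moment estimate, and is not needed here.
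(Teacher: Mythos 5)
Your proof is correct, and it is worth recording how it differs from the paper's. For $N_j(n)$ the paper does not use the martingale $\hat a_nN_j(n)$ directly: it first gets $\mathbb{E}(N_j(n))=O(n^p)$, deduces $L^1$-convergence of $N_j(n)/(n^p\log\log n)$ to $0$, and then verifies by an explicit computation involving $\log\log\log n$ that $N_j(n)/(n^p\log\log n)$ is eventually a nonnegative supermartingale, so that the a.s.\ limit coincides with the $L^1$ limit. Your observation that $\{\hat a_nN_j(n),\mathscr{F}_n\}_{n\ge j}$ is itself a nonnegative martingale short-circuits all of this: Doob's convergence theorem gives $N_j(n)=O(n^p)$ a.s.\ outright, which is strictly stronger than what the lemma asserts and removes the need for both the $L^1$ estimate and the supermartingale verification. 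For $\Delta_j(n)$ the paper gives no argument at all beyond citing ``the same arguments as in Lemma 4.4 of \cite{HZZY} but more precise calculations''; your second-moment recursion $v_{n+1}=v_n(1-2p/n)+p\,\mathbb{E}(R_j(n))/n$, the resulting bound $\mathbb{E}\big((\check a_n\Delta_j(n))^2\big)=O(n^{3p})$, and the dyadic Doob/Borel--Cantelli step supply a complete, self-contained proof of exactly the rate $o(n^{p/2}\log n)$ that the lemma needs, and your closing remark correctly identifies why the spare $\log n$ factor makes a bare $L^2$ bound sufficient. One notational slip to fix: since $\hat\gamma_n=(n+p)/n$, the one-step identity is $\mathbb{E}(N_j(n+1)\mid\mathscr{F}_n)=\hat\gamma_nN_j(n)$, not $\hat\gamma_n^{-1}N_j(n)$; combined with $\hat a_{n+1}=\hat a_n\hat\gamma_n^{-1}$ this is precisely what makes $\hat a_nN_j(n)$ a martingale, so the conclusion you draw is the right one even though the displayed chain of equalities is internally inconsistent as written.
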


\begin{proof}
	From the proof of Lemma 4.3 in \cite{HZZY},
we have 
	\bestar
	\mathbb{E}\big( N_j(n+1) \big| \mathscr{F}_n \big)=\frac{n+p}{n} N_j(n), \label{enj}
	\eestar
and  $\mathbb{E}(N_j(n))=O(n^p)$.
Let  $\omega_n= n^{p}\log \log  n$.	 Then $	\mathbb{E}(N_j(n))/\omega_n\rightarrow 0$, and hence  $N_j(n)/\omega_n\stackrel{L_1}{\rightarrow} 0$  since $N_j(n)\ge 0 $.

Next, we will show that $\frac{(n+p)\omega_n}{n\omega_{n+1}}<1$ for sufficiently large $n$.
Note that
\bestar
\log \frac{(n+p)\omega_n}{n\omega_{n+1}}
=\log (1+p/n)-p\log (1+1/n)+ \log \log \log  n-\log\log \log (n+1).
\eestar	
By L'Hospital's rule, we have
\bestar
\lim_{x\rightarrow 0}\frac{\log(1+px)-p\log (1+x)}{x^2}
&=&\lim_{x\rightarrow 0}\frac{p/(1+px)-p/(1+x)}{2x}\\
&=&\lim_{x\rightarrow 0}\frac{p(1-p)}{2(1+px)(1+x)} = p(1-p)/2.
\eestar
And  $\log\log \log (n+1)-\log \log\log n=1/(\xi_n \log \xi_n \log\log \xi_n)\sim 1/(n\log n \log\log n)$ by the mean value theorem, where
$\xi_n\in (n, n+1)$. Hence
$(n\log n \log\log n)\log \frac{(n+p)\omega_n}{n\omega_{n+1}}\rightarrow -1$ and the desired result holds. 

Then, for sufficiently large $n$, we have
	\bestar
	\mathbb{E}\Big( \frac{N_j(n+1)}{\omega_{n+1}} \Big| \mathscr{F}_n \Big) = \frac{(n+p)\omega_n}{n\omega_{n+1}}\frac{N_j(n)}{\omega_n}\le \frac{N_j(n)}{\omega_n}.
	\eestar
	Hence there exists $n_0\in \mathbb{N}$ such that $\{N_j(n)/\omega_n, n\ge n_0\} $ is a nonnegative supermartingale,
	and  the $L_1$
convergence implies the almost sure convergence.

By using the same arguments as in  the proof of Lemma 4.4 in \cite{HZZY} but more precise calculations, we 
can obtain that $\Delta_j(n)/(n^{p/2}\log n)\rightarrow 0$~a.s.
\end{proof}

Now we turn to prove Propositions \ref{mainthm212}
and \ref{mainthm222}.

\begin{proof}[ Proof of Proposition \ref{mainthm212}]
Choose $\alpha=1/(2+\delta)$ and define
\bestar
\hat{c}_n^2
=\left\{
\begin{array}{ll}
\hat{a}_n^2n^{2/(2+\delta)}, & p\in [0, 1/(2+\delta)),\\
 \log n, & p=1/(2+\delta),\\
\log\log n, &p \in (1/(2+\delta),  1/2].
\end{array}
\right.
\eestar
By   (\ref{asy of hat a_n}),
we have
$\sum_{k=1}^n \hat{a}_k^2k^{-\delta/(2+\delta)}
=O(\hat{c}_n^2)$, and hence by  Lemma  \ref{lemmaa4},
\bestar
\sum_{k=1}^n (\mathbb{E}(\hat{Y}_k^2|\mathscr{F}_{k-1})-\mathbb{E}(\hat{Y}_k^2))=o(\hat{c}_n^2)
~~\mathrm{a.s.}
\eestar
Applying (\ref{yn4}) and Fubini's theorem gives that
\begin{eqnarray*}
	\sum_{k=1}^\infty \hat{c}_k^{-4}\mathbb{E}( \hat{Y}_k^{4}) &\le& C \sum_{k=1}^\infty\frac{\mathbb{E}( \hat{Y}_k^{4})}{\hat{a}_k^4 k^{4/(2+\delta)}}
	\le  	C\sum_{k=1}^\infty  k^{-4/(2+\delta)}\mathbb{E}(X_1^{4}I(|X_1|\le k^{1/(2+\delta)}))\nonumber\\
	&\le&C\mathbb{E}\left(X_1^{4}\sum_{k=1}^\infty{k^{-4/(2+\delta)}}I(|X_1|\le k^{1/(2+\delta)})\right)\nonumber\\
	&=&C\mathbb{ E}\left(X_1^{4}\sum_{k \ge |X_1|^{2+\delta}} k^{-4/(2+\delta)}\right)\nonumber\\
	&\le&C\mathbb{ E}|X_1|^{4+(2+\delta)(1-4/(2+\delta))}= C\mathbb{ E}|X_1|^{2+\delta}<\infty,
\end{eqnarray*}
where we have used the fact that $\hat{a}_n^4 n^{4/(2+\delta)}=O(\hat{c}_n^4).$
Then by Theorem 2.1 of Shao \cite{Shao1993},  we can redefine
$\{\hat{M}^*_n, n\ge 1\}$ on  a richer probability space
on which there exists a standard Brownian motion $\{B(t), t\ge 0\}$ such that
\bestar
\hat{M}^*_n-B(\hat{\sigma}_{1n}^2)=o\Big(\hat{c}_n\Big(\log \frac{\hat{\sigma}_{1n}^2}{\hat{c}_n^2}+\log\log \hat{c}_n^2\Big)^{1/2}\Big)=\left
\{\begin{array}{ll}
o(\hat{c}_n\sqrt{\log n}), & p \in [0, 1/2),\\
o(\hat{c}_n\sqrt{\log \log n}), & p=1/2,
\end{array}
\right.
~~\mathrm{a.s.},
\eestar
where $\hat{\sigma}_{1n}^2=\mbox{Var}(\hat{M}^*_n)=\hat{a}_n^2\mbox{Var}(\hat{S}^*_n)$, and we have used (\ref{asy of hat a_n}) and Lemma  \ref{lemma5}.
Furthermore,
\bestar
\frac{\hat{S}^*_n-\mathbb{E}(\hat{S}^*_n)}{\sqrt{\mbox{Var}(\hat{S}^*_n)}}-\frac{B(\hat{\sigma}_{1n}^2)}{\hat{\sigma}_{1n}}=\frac{1}{\hat{\sigma}_{1n}}(\hat{M}^*_n-B(\hat{\sigma}_{1n}^2))=o(\hat{\delta}_n)
~~\mathrm{a.s.},
\eestar
where $\hat{\delta}_n$ is defined as in (\ref{Nas}).
This together with Lemmas  \ref{lemmaAA5} and \ref{lemma5}, and the law of iterated 
logarithm for a Brownian motion implies that
\begin{eqnarray}
\Big|\frac{\hat{S}^*_n-m_1n}{\hat{\sigma}_n}-\frac{B(\hat{\sigma}_{1n}^2)}{\hat{\sigma}_{1n}}\Big|
&\le& \frac{\sqrt{\mbox{Var}(\hat{S}^*_n)}}{\hat{\sigma}_n}\Big|\frac{\hat{S}^*_n-\mathbb{E}(\hat{S}^*_n)}{\sqrt{\mbox{Var}(\hat{S}^*_n)}}-\frac{B(\hat{\sigma}_{1n}^2)}{\hat{\sigma}_{1n}}\Big| 
\nonumber\\
&&+\frac{|\mathbb{E}(\hat{S}^*_n)-m_1n|}{\hat{\sigma}_{n}}+\Big|\frac{\sqrt{\mbox{Var}(\hat{S}^*_n)}}{\hat{\sigma}_n}-1\Big|\frac{|B(\hat{\sigma}_{1n}^2)|}{\hat{\sigma}_{1n}}\nonumber\\
&=&o(\hat{\delta}_{n})
~~\mathrm{a.s.},   \label{diffsBM}
\end{eqnarray}
where  $\hat{\sigma}_n^2$ is defined as in (\ref{evarhat2}).

By Fubini's theorem, we have
	\bestar
	\sum_{n=1}^{\infty}\mathbb{P}(Z_n\ne X_n)&=&\sum_{n=1}^{\infty}\mathbb{P}(|X_n|>n^{1/(2+\delta)})=\sum_{n=1}^{\infty}\mathbb{E}(I(|X_1|^{2+\delta}>n))\\
	&=&\mathbb{E}\Big(\sum_{n=1}^{\infty}I(|X_1|^{2+\delta}>n)\Big)\le \mathbb{E}(|X_1|^{2+\delta})<\infty,
	\eestar
	and hence $\mathbb{P}(Z_n\ne X_n~\mathrm{i.o.})=0$ by the Borel-Cantelli Lemma. By
applying Lemma 4.5 in \cite{HZZY} and using (\ref{evarhat2}), (\ref{Nas}), (\ref{hatsexpress}) and Lemma \ref{Njn}, we have
	\be
	\frac{1}{\hat{\sigma}_n \hat{\delta}_n} (\hat{S}^*_n-\hat{S}_n)= \frac{n^{p}\log\log n}{\hat{\sigma}_n \hat{\delta}_n}\sum_{j=1}^n \frac{N_j(n)}{n^{p}\log\log n} (Z_j-X_j)\rightarrow 0\quad\mathrm{a.s.} 
\label{diffstars}
	\ee

Note that
\bestar
\hat{\sigma}_{1n}^2=\hat{a}_n^2\mbox{Var}(\hat{S}^*_n)
=\hat{\sigma}_{2n}^2 (1+\hat{\delta}_{1n}),
\eestar
where $\hat{\sigma}_{2n}^2=\Gamma^2(1+p)n^{-2p}\hat{\sigma}_n^2$ and
\bestar
\hat{\delta}_{1n}=\left\{
\begin{array}{ll}
o(n^{-\delta/(2+\delta)}), & p \in [0, 1/(2+\delta)),\\
o(n^{-\delta/(2+\delta)}\log n), & p=1/(2+\delta),\\
O(n^{2p-1}), &p\in (1/(2+\delta), 1/2),\\
O(1/\log n), & p=1/2.
\end{array}
\right.
\eestar
Then by Theorem 1.2.1 in \cite{CR1981}, we have
\bestar
\Big|\frac{B(\hat{\sigma}_{1n}^2)}{\hat{\sigma}_{1n}}-\frac{B(\hat{\sigma}_{2n}^2)}{\hat{\sigma}_{2n}}\Big|
&\le&  \frac{|B(\hat{\sigma}_{1n}^2)-B(\hat{\sigma}_{2n}^2)|}{\hat{\sigma}_{2n}}+|B(\hat{\sigma}_{2n}^2)||\hat{\sigma}_{1n}^{-1}-\hat{\sigma}_{2n}^{-1}|\\
&=&  \frac{|B(\hat{\sigma}_{1n}^2)-B(\hat{\sigma}_{2n}^2)|}{\hat{\sigma}_{2n}}+\frac{|B(\hat{\sigma}_{2n}^2)||\hat{\sigma}_{1n}^2-\hat{\sigma}_{2n}^2|}{\hat{\sigma}_{1n}\hat{\sigma}_{2n}(\hat{\sigma}_{1n}+\hat{\sigma}_{2n})}\\
&=& O(1)\hat{\delta}_{1n}(-\log \hat{\delta}_{1n}+\log\log \hat{\sigma}_{2n}^2)^{1/2}=o(\hat{\delta}_n)~~\mathrm{a.s.}
\eestar
This together with (\ref{diffsBM}) and (\ref{diffstars}) implies that
\bestar
\Big|\frac{\hat{S}_n-m_1n}{\hat{\sigma}_n}-\frac{B(\hat{\sigma}_{2n}^2)}{\hat{\sigma}_{2n}}\Big|=o(\hat{\delta}_n)~~\mathrm{a.s.}
\eestar
Then (\ref{spt}) follows by defining
$W(t)=B(\Gamma^2(1+p)t)/\Gamma(1+p), t\ge 0,$
which is a standard Brownian motion.
\end{proof}

\begin{proof}[ Proof of Proposition \ref{mainthm222}]
The proof is similar to that of  Proposition \ref{mainthm212} and we omit the details.
\end{proof}

\end{appendices}

\end{document}